\documentclass{article}

% if you need to pass options to natbib, use, e.g.:
%     \PassOptionsToPackage{numbers, compress}{natbib}
% before loading neurips_2023

% ready for submission
\usepackage[nonatbib,preprint]{neurips_2023}

% to compile a preprint version, e.g., for submission to arXiv, add add the
% [preprint] option:
%     \usepackage[preprint]{neurips_2023}

% to compile a camera-ready version, add the [final] option, e.g.:
%     \usepackage[final]{neurips_2023}

% to avoid loading the natbib package, add option nonatbib:
%    \usepackage[nonatbib]{neurips_2023}

\usepackage[utf8]{inputenc} % allow utf-8 input
\usepackage[T1]{fontenc}    % use 8-bit T1 fonts
\usepackage{url}            % simple URL typesetting
\usepackage{booktabs}       % professional-quality tables
\usepackage{amsfonts,amsthm,amsmath,amssymb}       % blackboard math symbols
\usepackage{nicefrac}       % compact symbols for 1/2, etc.
\usepackage{microtype}      % microtypography
\usepackage{xcolor}         % colors
\usepackage{optidef}
\usepackage{subcaption}
\usepackage{cite}
\DeclareMathOperator{\Clip}{Clip}
\DeclareMathOperator{\med}{med}
\newtheorem{thm}{Theorem}
\newtheorem{ass}{Assumption}
\newtheorem{defi}{Definition}
\newtheorem{lem}{Lemma}
\newtheorem{rmk}{Remark}

\title{Robust Nonparametric Regression under Poisoning Attack}

% The \author macro works with any number of authors. There are two commands
% used to separate the names and addresses of multiple authors: \And and \AND.
%
% Using \And between authors leaves it to LaTeX to determine where to break the
% lines. Using \AND forces a line break at that point. So, if LaTeX puts 3 of 4
% authors names on the first line, and the last on the second line, try using
% \AND instead of \And before the third author name.

\author{%
  Puning Zhao, Zhiguo Wan\\%\thanks{Use footnote for providing further information
   % about author (webpage, alternative address)---\emph{not} for acknowledging
   % funding agencies.} \\
  Zhejiang Lab\\
  Hangzhou, Zhejiang, China\\
  \texttt{\{pnzhao,wanzhiguo\}@zhejianglab.com} \\
  % examples of more authors
  % \And
  % Coauthor \\
  % Affiliation \\
  % Address \\
  % \texttt{email} \\
  % \AND
  % Coauthor \\
  % Affiliation \\
  % Address \\
  % \texttt{email} \\
  % \And
  % Coauthor \\
  % Affiliation \\
  % Address \\
  % \texttt{email} \\
  % \And
  % Coauthor \\
  % Affiliation \\
  % Address \\
  % \texttt{email} \\
}

\begin{document}

\maketitle

\begin{abstract}
	This paper studies robust nonparametric regression, in which an adversarial attacker can modify the values of up to $q$ samples from a training dataset of size $N$. Our initial solution is an M-estimator based on Huber loss minimization. Compared with simple kernel regression, i.e. the Nadaraya-Watson estimator, this method can significantly weaken the impact of malicious samples on the regression performance. We provide the convergence rate as well as the corresponding minimax lower bound. The result shows that, with proper bandwidth selection, $\ell_\infty$ error is minimax optimal. The $\ell_2$ error is optimal with relatively small $q$, but is suboptimal with larger $q$. The reason is that this estimator is vulnerable if there are many attacked samples concentrating in a small region. To address this issue, we propose a correction method by projecting the initial estimate to the space of Lipschitz functions. The final estimate is nearly minimax optimal for arbitrary $q$, up to a $\ln N$ factor.
	
\end{abstract}

\section{Introduction}
In the era of big data, it is common for some samples to be corrupted due to various reasons, such as transmission errors, system malfunctions, malicious attacks, etc. The values of these samples may be altered in any way, rendering many traditional machine learning techniques less effective. Consequently, evaluating the effects of these corrupted samples, and making corresponding robust strategies, have become critical tasks in the research community \cite{natarajan2013learning,van2017theory,song2022learning}. 

Among all types of data contamination, adversarial attack is of particular interest in recent years \cite{biggio2012poisoning,xiao2015feature,jagielski2018manipulating,szegedy2014intriguing,goodfellow2015explaining,madrytowards,mao2019metric}, in which there exists a malicious adversary who aims at deteriorating our model performance. With this goal, the attacker alters the values of some samples using a carefully designed strategy. %Compared with other types of contamination, such as accidental errors or noise, adversarial attacks are more challenging to deal with, since their values are altered deliberately instead of randomly. Therefore, any learning models that can withstand adversarial attacks should also be resilient to other corruptions. 
To cope with these attacks, robust statistics comes into being, which has been widely discussed in existing literatures \cite{huber1981robust,maronna2019robust}. Several commonly used methods are trimmed mean, median-of-means and $M$-estimators. In recent years, many new methods are proposed for high dimensional problems with optimal statistical rates. These methods are summarized in  \cite{steinhardt2018robust,diakonikolas2019recent,diakonikolas2023algorithmic}. For example,  \cite{diakonikolas2016robust,diakonikolas2017being,hopkins2018mixture,cheng2019faster} have solved some basic problems such as mean and covariance estimation. The idea of these research can then be used in machine learning problems with poisoning attack, which means that some training samples are modified by adversaries. \cite{bakshi2021robust,diakonikolas2019efficient} designed some robust methods for linear regression. \cite{diakonikolas2019sever,steinhardt2017certified} proposed a meta algorithm for robust learning with parametric models. There are also several other works that focus on general robust empirical risk minimization problems \cite{prasad2020robust,jambulapati2021robust}. 

Despite these previous works toward robust learning problems, most of them focus on parametric models. However, for nonparametric methods such as kernel \cite{nadaraya1964estimating} and k nearest neighbor estimator, defense strategies against poisoning attack still need further exploration \cite{salibian2022robust}.  Actually, designing robust techniques is indeed more challenging for nonparametric methods than parametric one. For parametric models, the parameters are estimated using full dataset, while nonparametric methods have to rely on local training data around the query point. Even if the ratio of attacked samples among the whole dataset is small, the local anomaly ratio in the neighborhood of the query point can be large. As a result, the estimated function value at such query point can be totally wrong. Despite such difficulty, in many real scenarios, due to problem complexity or lack of prior knowledge, parametric models are not always available. Therefore, we hope to explore effective schemes to overcome the robustness issue of nonparametric regression.

In this paper, we provide a theoretical study about robust nonparametric regression problem under poisoning attack. In particular, we hope to investigate the theoretical limit of this problem, and design a method to achieve this limit. Towards this goal, we make the following contributions: 

Firstly, we propose and analyze an estimator that minimizes a weighted Huber loss, which is quadratic with small input, and linear with large input. Such design achieves a tradeoff between consistency and adversarial robustness. It was originally proposed in \cite{hall1990adaptive}, but to the best of our knowledge, it was not analyzed under adversarial setting. We show the convergence rate of both $\ell_2$ and $\ell_\infty$ risk, under the assumption that the function to estimate is Lipschitz continuous, and the noise is sub-exponential. An interesting finding is that the maximum number of attacked samples (denoted as $q$) is not too large, then the convergence rate is not affected by adversarial samples, i.e. the influence of poisoning samples on the overall risk is only up to a constant factor.

Secondly, we provide an information theoretic minimax lower bound, which indicates the underlying limit one can achieve, with respect to $q$ and $N$. The minimax lower bound without adversarial samples can be derived using standard information theoretic methods \cite{tsybakov2009}. Under adversarial attack, the estimation problem is harder, thus the lower bound in \cite{tsybakov2009} may not be tight enough. We design some new techniques to derive a tighter one. The result shows that the initial estimator has optimal $\ell_\infty$ risk. With small $q$, the $\ell_2$ risk is also minimax optimal. Nevertheless, for larger $q$, the $\ell_2$ risk is not optimal, indicating that this estimator is still not perfect. We then provide an intuitive explanation of the suboptimality. Instead of attacking some randomly selected training samples, the best strategy for the attacker is to focus their attack within a small region. With this strategy, majority of training samples are altered here, resulting in wrong estimates. A simple remedy is to increase the kernel bandwidth to improve robustness, which can make $\ell_\infty$ risk optimal. However, this adjustment will introduce additional bias in other regions, thus the $\ell_2$ risk is still suboptimal. The drawback of the initial estimator is that it does not make full use of the continuity of regression function, and thus unable to correct the estimation.

%We then provide an intuitive explanation of the suboptimality. Instead of attacking some randomly selected training samples, the best strategy for the attacker is to focus their attack within a small region. With this strategy, majority of training samples are altered within such region, resulting in wrong estimates here. In this case, $\ell_2$ risk is no longer optimal, despite that $\ell_\infty$ risk can still be made optimal by adjusting the bandwidth.

Finally, motivated by the issues of the initial method mentioned above, we propose a corrected estimator. If the attack focuses on a small region, then the initial estimate fails here. However, the estimate elsewhere is still reliable. With the assumption that the underlying function is continuous, the value at the severely corrupted region can be inferred using the surrounding values. With such intuition, we propose a nonlinear filtering method, which projects the estimated function to the space of Lipschitz functions with minimal $\ell_1$ distance. The corrected estimate is then proved to be nearly minimax optimal up to only a $\ln N$ factor.

\section{Preliminaries}
In this section, we clarify notations and provide precise problem statements.
Suppose $\mathbf{X}_1,\ldots, \mathbf{X}_N \in \mathcal{X}\subset \mathbb{R}^d$ be $N$ independently and identically distributed training samples, generated from a common probability density function (pdf) $f$. For each sample $\mathbf{X}_i$, we can receive a corresponding label $Y_i$:
\begin{eqnarray}
	Y_i = \left\{
	\begin{array}{cc}
		\eta(\mathbf{X}_i)+ W_i & \text{if } i\notin \mathcal{B}\\
		\star & \text{otherwise,}
	\end{array}
	\right.
	\label{eq:Y}
\end{eqnarray}
in which $\eta:\mathbb{R}^d\rightarrow \mathbb{R}$ is the unknown underlying function that we would like to estimate. $W_i$ is the noise variable. For $i=1,\ldots, N$, $W_i$ are independent, with zero mean and finite variance. $\mathcal{B}$ is the set of indices of attacked samples. $\star$ means some value determined by the attacker. For each normal sample $\mathbf{X}_i$, the received label is $Y_i=\eta(\mathbf{X}_i)+W_i$. However, if a sample is attacked, then $Y_i$ can be arbitrary value determined by the attacker. The attacker can manipulate up to $q$ samples, thus $|\mathcal{B}|\leq q$. 

Our goal is opposite to the attacker. We hope to find an estimate $\hat{\eta}$ that is as close to $\eta$ as possible, while the attacker aims at reducing the estimation accuracy using a carefully designed attack strategy. We consider white-box setting here, in which the attacker has complete access to the ground truth $\eta$, $\mathbf{X}_i$ and $W_i$ for all $i\in \{1,\ldots,N \}$, as well as our estimation algorithm. Under this setting, we hope to design a robust regression method that resists to any attack strategies.

The quality of estimation is evaluated using $\ell_2$ and $\ell_\infty$ loss, which is defined as 
\begin{eqnarray}
	R_2[\hat{\eta}]&=& \mathbb{E}\left[\underset{\mathcal{A}}{\sup}(\hat{\eta}(\mathbf{X})-\eta(\mathbf{X}))^2\right],\label{eq:l2def}\\
	R_\infty[\hat{\eta}]&=&\mathbb{E}\left[\underset{\mathcal{A}}{\sup}\underset{\mathbf{x}}{\sup}|\hat{\eta}(\mathbf{x})-\eta(\mathbf{x})|\right],
	\label{eq:linftydef}
\end{eqnarray}
in which the expectation in \eqref{eq:l2def} and \eqref{eq:linftydef} are taken over all training samples $(\mathbf{X}_i, Y_i), \ldots, (\mathbf{X}_N, Y_N)$. $\mathcal{A}$ denotes the attack strategy. The supremum over $\mathcal{A}$ is taken here because the adversary is assumed to be smart enough and the attack strategy is optimal. In \eqref{eq:l2def},  $\mathbf{X}$ denotes a random test sample that follows a distribution with pdf $f$. Our analysis can be easily generated to $\ell_p$ loss with arbitrary $p$.

Without any adversarial samples, $\eta$ can be learned using kernel regression, also called the Nadaraya-Watson estimator \cite{nadaraya1964estimating,watson1964smooth}:
\begin{eqnarray}
	\hat{\eta}_{NW}(\mathbf{x}) = \frac{\sum_{i=1}^N K\left(\frac{\mathbf{x}-\mathbf{X}_i}{h}\right)Y_i}{\sum_{i=1}^N K\left(\frac{\mathbf{x} - \mathbf{X}_i}{h} \right)},
	\label{eq:simple}
\end{eqnarray}

in which $K$ is the Kernel function, $h$ is the bandwidth that will decrease with the increase of sample size $N$. $\hat{\eta}_{NW}(\mathbf{x})$ can be viewed as a weighted average of the labels around $\mathbf{x}$. Without adversarial attack, such estimator converges to $\eta$ \cite{devroye1978uniform}. However, \eqref{eq:simple} fails even if a tiny fraction of samples are attacked. The attacked labels can just set to be sufficiently large. As a result, $\hat{\eta}_{NW}(\mathbf{x})$ could be far away from its truth.

\section{The Initial Estimator}\label{sec:method}

Now we build the estimator based on Huber loss minimization. Similar method was proposed in \cite{hall1990adaptive}. However, \cite{hall1990adaptive} analyzed the case in which the distribution of label has heavy tails, instead of the case with corrupted samples. To the best of our knowledge, the performance under adversarial setting has not been analyzed. Now we use $\hat{\eta}_0$ to denote a slightly modified version of the estimator proposed in \cite{hall1990adaptive}:
\begin{eqnarray}
	\hat{\eta}_0(\mathbf{x}) = 
	\underset{|s|\leq M}{\arg\min}\sum_{i=1}^N K\left(\frac{\mathbf{x}-\mathbf{X}_i}{h}\right)\phi(Y_i - s),
	\label{eq:eta}
\end{eqnarray}
in which tie breaks arbitrarily if the minimum is not unique, and
%in which tie breaks arbitrarily if the minimum is not unique, $M$ is a constant and need to be larger than the upper bound of $|\eta(\mathbf{x})|$, and 
\begin{eqnarray}
	\phi(u) = \left\{
	\begin{array}{ccc}
		u^2 &\text{if} & |u|\leq T\\
		2T|u|-T^2 &\text{if} & |u|>T
	\end{array}
	\right.
	\label{eq:phi}
\end{eqnarray}
is the Huber loss function.

Here we provide an intuitive understanding of this method. We hope the estimator to have two properties: robustness under attack, and consistency without attack. Robustness is guaranteed if we let $\phi$ be $\ell_1$ loss, but the solution is the local median instead of mean. As long as the noise distribution is not symmetric, $\ell_1$ minimizer is not consistent. On the contrary, letting $\phi$ be $\ell_2$ loss just yields the kernel regression \eqref{eq:simple}, which is not robust. Therefore, Huber cost \eqref{eq:phi} is designed to get a tradeoff between these two goals, which is quadratic with small input and linear with large input. The threshold parameter $T$ can be set flexibly. Moreover, consider that there exists nonzero probability that $|\hat{\eta}(\mathbf{x})-\eta(\mathbf{x})|$ is arbitrarily large, we project the result into $[-M, M]$. \footnote{Suppose that for some $\mathbf{x}$, there is only one training sample whose distance to $\mathbf{x}$ is smaller than $h$. This sample is controlled by adversary and the value is altered arbitrarily far away. This event can result in arbitrarily large estimation error, and happens with nonzero probability. Therefore, if we do not project the estimate output to $[-M, M]$, then both $\ell_2$ and $\ell_\infty$ loss will be infinite.}

There are several simple baselines for comparison. The first one is median-of-means (MoM) \cite{nemirovskij1983problem,ben2023robust}, which divides samples into groups and calculates the median of the estimates in each group. MoM is inefficient because it fails even when there is only one attacked sample in each group. Another solution is trimmed mean \cite{bickel1965some,welsh1987trimmed,dhar2022trimmed}, which removes a fraction of samples with largest and smallest label values. The trim fraction parameter depends on the ratio of attacked samples. Unfortunately, such ratio is highly likely to be uneven over the support, while the trim fraction is set uniformly. This dilemma makes trimmed mean method not efficient. Robust regression with spline smoothing \cite{eubank1999nonparametric} is another alternative but is restricted to one dimensional problems. Finally, robust regression trees \cite{chaudhuri2002nonparametric} works practically but theoretical guarantee is not provided.
%We would like to remark that apart from Huber loss minimization, there are other robust mean estimation methods, such as  median-of-means (MoM) \cite{nemirovskij1983problem,ben2023robust} and trimmed means \cite{bickel1965some,dhar2022trimmed}. However, it is not efficient to generalize these methods to nonparametric regression. For MoM, with up to $q$ corrupted samples, it divides the data into at least $2q+1$ groups and then calculate the median of the means of values in each group. Under the regression setting, since the distribution of attacked samples is unknown, we have to divide the data into $2q+1$ groups within the neighborhood of each query point. As a result, the accuracy with $N$ training samples with $q$ contaminated is only comparable to those with $N/(2q+1)$ clean samples, indicating that the MoM method is ineffective. Trimmed means method has similar problems. The threshold of the trimmed mean need to be set uniformly among the whole support, while the adversarial attack may focus on a small region. As a result, the parameter can not be tuned optimal everywhere. The nonconsistency at attacked region and the inefficiency at relatively cleaner regions are two problems that can not be avoided simultaneously. Consequently, these alternative approaches are less effective than the M-estimator based on Huber loss minimization.

Finally, we comment on the computation of the estimator \eqref{eq:eta}. Note that $\phi$ is convex, therefore the minimization problem in \eqref{eq:eta} can be solved by gradient descent. The derivative of $\phi$ is
\begin{eqnarray}
	\phi'(u)= \left\{
	\begin{array}{ccc}
		2u &\text{if} & |u|\leq T\\
		2T & \text{if} & u>T\\
		-2T &\text{if} & u<-T.
	\end{array}
	\right.
	\label{eq:derivative}
\end{eqnarray} 
Based on \eqref{eq:eta} and \eqref{eq:derivative}, $s$ can be updated using binary search. Denote $\epsilon$ as the required precision, then the number of iterations for binary search should be $O(\ln (M/\epsilon))$. Therefore, the computational complexity is higher than kernel regression up to a $\ln (M/\epsilon)$ factor.

\section{Theoretical Analysis}\label{sec:theory}
This section proposes the theoretical analysis of the initial estimator \eqref{eq:eta} under adversarial setting. To begin with, we make some assumptions about the problem.
\begin{ass}\label{ass:problem}
	(Problem Assumption) there exists a compact set $\mathcal{X}$ and several constants $L$, $\gamma$, $f_m$, $f_M$, $D$, $\alpha$, $\sigma$, such that the pdf $f$ is supported at $\mathcal{X}$, and
	
	(a) (Lipschitz continuity) For any $\mathbf{x}_1, \mathbf{x}_2 \in \mathcal{X}$, 
	$|\eta(\mathbf{x}_1)-\eta(\mathbf{x}_2)|\leq L||\mathbf{x}_1-\mathbf{x}_2||$;
	
	(b) (Bounded $f$ and $\eta$) For all $\mathbf{x} \in \mathcal{X}$, $f_m\leq f(\mathbf{x}) \leq f_M$ and $|\eta(\mathbf{x})| \leq M$, in which $M$ is the parameter used in \eqref{eq:eta};
	
	(c) (Corner shape restriction) For all $r<D$ and $\mathbf{x}\in \mathcal{X}$,
	$V(B(\mathbf{x}, r)\cap \mathcal{X})\geq \alpha v_dr^d$,
	in which $B(\mathbf{x}, r)$ is the ball centering at $\mathbf{x}$ with radius $r$, $v_d$ is the volume of $d$ dimensional unit ball, which depends on the norm we use;
	
	(d) (Sub-exponential noise) The noise $W_i$ is subexponential with parameter $\sigma$, 
	\begin{eqnarray}
		\mathbb{E}[e^{\lambda W_i}]\leq e^{\frac{1}{2}\sigma^2 \lambda^2}, \forall |\lambda|\leq \frac{1}{\sigma},
		\label{eq:subexp}
	\end{eqnarray}
	for $i=1,\ldots, N$.
\end{ass}

In Assumption \ref{ass:problem}, (a) is a common assumption for smoothness. (b) is also commonly made and usually called "strong density assumption" in existing literatures on nonparametric statistics \cite{audibert2007fast,doring2017rate}. This assumption requires the pdf to be both upper and lower bounded in its support. Although somewhat restrictive, this assumption facilitates theoretical analysis. Relaxing this assumption is possible. For example, we may use the adaptive strategies in \cite{gadat2016classification,zhao2021minimax,Zhao:2022:TIT}. We refer to section \ref{sec:beyond} in the appendix for some further analysis. (c) prevents the shape of the corner of the support from being too sharp. Without assumption (c), the samples around the corner may not be enough, and the attacker can just attack the samples at the corner of the support, which can result in large errors. (d) requires that the noise is sub-exponential. If the noise assumption is weaker, e.g. only requiring the bounded moments of $W_i$ up to some order, then the noise can be disperse. In this case, it will be harder to distinguish adversarial samples from clean samples. 

We then make some restrictions on the kernel function $K$.
\begin{ass}\label{ass:kernel}
	(Kernel Assumption) the kernel need to satisfy:
	
	(a) $\int K(\mathbf{u}) du = 1$;
	
	(b)$K(\mathbf{u}) = 0, \forall ||\mathbf{u}||>1$;
	
	(c) $c_K \leq K(\mathbf{u)}\leq C_K$ for two constants $c_K$ and $C_K$.
\end{ass}
In Assumption \ref{ass:kernel}, (a) is actually not necessary, since from \eqref{eq:eta}, the estimated value will not change if the kernel function is multiplied by a constant factor. This assumption is only for convenience of proof. (b) and (c) require that the kernel need to be somewhat close to the uniform function in the unit ball. Intuitively, if the attacker wants to modify the estimate at some $\mathbf{x}$, the best way is to change the response of sample $i$ with large $K((\mathbf{X}_i-\mathbf{x})/h)$, in order to make strong impact on $\hat{\eta}(\mathbf{x})$. To defend against such attack, the upper bound of $K$ should not be too large. Besides, to ensure that clean samples dominate corrupted samples everywhere, the effect of each clean sample on the estimation should not be too small, thus $K$ also need to be bounded from below in its support.

Furthermore, recall that \eqref{eq:eta} has three parameters, i.e. $h$, $T$ and $M$. We assume that these three parameters satisfy the following conditions.

\begin{ass}\label{ass:parameter}
	(Parameter Assumption) $h$, $T$, $M$ need to satisfy
	
	(a)$h>\ln^2 N/N$;	
	
	(b)$T\geq 4Lh+16\sigma\ln N$;
	
	(c)$M>\underset{\mathbf{x}\in \mathcal{X}}{\sup} |\eta(\mathbf{x})|$.
\end{ass}
In Assumption \ref{ass:parameter}, (a) ensures that the number of samples whose distance to $\mathbf{x}$ less than $h$ is not too small. It is necessary for consistency, but is not enough for a good tradeoff between bias, variance and robustness. The optimal dependence of $h$ over $N$ will be discussed later. (b) requires that $T\sim \ln N$. This rule is based on the sub-exponential noise condition in Assumption \ref{ass:problem}(d). If we use sub-Gaussian assumption instead, then it is enough for $T\sim \sqrt{\ln N}$. If the noise is further assumed to be bounded, then $T$ can just be set to constant. On the contrary, if the noise has heavier tail, then $T$ needs to grow with $N$ faster. (b) is mainly designed for rigorous theoretical analysis. Practically, one may choose smaller $T$. (c) prevents the estimate from being truncated too much.

The upper bound of $\ell_2$ error is derived under these assumptions. Denote $a\lesssim b$ if $a\leq Cb$ for some constant $C$ that depends only on $L, M, \gamma, f_m, f_M, D, \alpha, \sigma,c_K, C_K$.

\begin{thm}\label{thm:main}
	Under Assumption \ref{ass:problem}, \ref{ass:kernel} and \ref{ass:parameter},
	\begin{eqnarray}
		\mathbb{E}\left[\underset{\mathcal{A}}{\sup}\left(\hat{\eta}_0(\mathbf{X})-\eta(\mathbf{X})\right)^2\right]\lesssim \frac{T^2q}{N}\min\left\{\frac{q}{Nh^d}, 1 \right\} + h^{2} + \frac{1}{Nh^d}.
		\label{eq:upper}
	\end{eqnarray}
\end{thm}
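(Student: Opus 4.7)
The plan is to reduce the global risk bound to a pointwise error analysis at an arbitrary $\mathbf{x}\in\mathcal{X}$, integrate against $f$, and then take the supremum over attackers. Write $K_i=K((\mathbf{x}-\mathbf{X}_i)/h)$, $S(\mathbf{x})=\sum_i K_i$, and $\mathcal{B}(\mathbf{x})=\sum_{i\in\mathcal{B}}K_i$ for the total kernel mass and its corrupted share inside $B(\mathbf{x},h)$. As a preliminary scaffold I would show $S(\mathbf{x})\asymp Nh^d$ uniformly on $\mathcal{X}$ with probability $1-O(N^{-2})$, via Bernstein plus a cover of $\mathcal{X}$ of resolution $1/N$, using Assumption~\ref{ass:problem}(b),(c) and Assumption~\ref{ass:parameter}(a).

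Next I would exploit the first-order optimality of the Huber objective. Let $G(s)=\sum_i K_i\phi'(Y_i-s)$. Convexity of $\phi$ and Assumption~\ref{ass:parameter}(c) imply that either $G(\hat\eta_0(\mathbf{x}))=0$ or $\hat\eta_0(\mathbf{x})=\pm M$, the latter absorbed by the $2M$ fallback. Evaluate $G$ at $s=\eta(\mathbf{x})$: Lipschitzness gives $Y_i-\eta(\mathbf{x})=W_i+O(Lh)$ for every clean $i$ in the window, and Assumption~\ref{ass:problem}(d) combined with $T\ge 4Lh+16\sigma\ln N$ forces $|W_i+O(Lh)|\le T$ simultaneously for all such $i$ with probability $1-O(N^{-c})$, activating the linear piece of \eqref{eq:derivative}. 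On that event,
\begin{align*}
 G(\eta(\mathbf{x}))=2\sum_{i\notin\mathcal{B}}K_iW_i+O(hS(\mathbf{x}))+R(\mathbf{x}),\quad |R(\mathbf{x})|\le 2T\mathcal{B}(\mathbf{x}),
\end{align*}
while $G$ is strictly decreasing near $\eta(\mathbf{x})$ with slope at most $-2(S(\mathbf{x})-\mathcal{B}(\mathbf{x}))$. Inverting on the safe event $\mathcal{E}(\mathbf{x})=\{\mathcal{B}(\mathbf{x})\le S(\mathbf{x})/2\}$ gives
\begin{align*}
 |\hat\eta_0(\mathbf{x})-\eta(\mathbf{x})|\lesssim\frac{|\sum_{i\notin\mathcal{B}}K_iW_i|}{Nh^d}+h+\frac{T\mathcal{B}(\mathbf{x})}{Nh^d}.
\end{align*}
Working directly with variances, $\mathrm{Var}(K_iW_i)\lesssim\sigma^2$ yields $\mathbb{E}_W|\sum_{i\notin\mathcal{B}}K_iW_i|^2\lesssim Nh^d$, producing the $1/(Nh^d)$ term with no spurious $\ln N$; on $\mathcal{E}(\mathbf{x})^c$ the $2M$ cap is used.

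Integrating the pointwise expected-squared bound against $f$ yields
\begin{align*}
 \mathbb{E}[b(\mathbf{X})^2]\lesssim h^2+\frac{1}{Nh^d}+\frac{T^2}{(Nh^d)^2}\int\mathcal{B}(\mathbf{x})^2f(\mathbf{x})d\mathbf{x}+M^2\mathbb{P}[\mathcal{E}(\mathbf{X})^c].
\end{align*}
Because the bound depends on $\mathcal{A}$ only through the nonnegative functional $\mathcal{B}$, the $\sup_\mathcal{A}$ reduces to a deterministic optimization controlled by two constraints: the global budget $\int\mathcal{B}(\mathbf{x})\,d\mathbf{x}\le C_Kqh^d$ (from $|\mathcal{B}|\le q$ and Assumption~\ref{ass:kernel}(a)) and the pointwise ceiling $\mathcal{B}(\mathbf{x})\lesssim\min(q,Nh^d)$ (from $K_i\le C_K$ and the sample-count concentration in $B(\mathbf{x},h)$). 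Their product bounds $\int\mathcal{B}(\mathbf{x})^2 d\mathbf{x}\lesssim qh^d\min(q,Nh^d)$, reproducing precisely the attacker term $(T^2q/N)\min(q/(Nh^d),1)$. The test-point measure of $\{\mathcal{E}(\mathbf{X})^c\}$ is also $\lesssim q/N$ under the same constraints, so $M^2\mathbb{P}[\mathcal{E}(\mathbf{X})^c]$ is absorbed into the attacker term.

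The main obstacle will be the pointwise inversion of $G$: one needs $G$ to be strictly decreasing with a quantitative slope on a window around $\eta(\mathbf{x})$ wide enough to contain $\hat\eta_0(\mathbf{x})$. This requires (i) simultaneous control of every clean $|W_i|$ in the window via sub-exponential tails and $T\gtrsim\sigma\ln N$, so the linear regime of $\phi'$ is active for the vast majority of clean samples throughout the relevant range of $s$; and (ii) careful coordination between the Lipschitz bias $Lh$, the adversarial cushion $T\mathcal{B}/S$, and the validity radius of the linearization, so that the safe event $\mathcal{E}(\mathbf{x})$ genuinely implies an implicit-function-style inversion. A secondary fussy point is pushing the high-probability pointwise bound into an expectation bound without picking up an extra $\ln N$; here the variance-based noise control and the trivial $2M$ fallback on the bad event are used in tandem.
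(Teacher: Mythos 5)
Your overall route is essentially the paper's: the core mechanism is the first-order condition of the convex Huber objective, the bounded influence $|\phi'|\le 2T$ of each corrupted sample, a quantitative slope of order $c_K(n_h(\mathbf{x})-q_h(\mathbf{x}))$ coming from clean samples sitting in the quadratic regime (guaranteed by $T\ge 4Lh+16\sigma\ln N$ and the sub-exponential tails), and an integration of the squared corrupted kernel mass under the budget constraint $\int q_h(\mathbf{x})\,d\mathbf{x}\le qv_dh^d$ together with the pointwise ceiling --- which is exactly where the factor $\min\{q/(Nh^d),1\}$ comes from. The only organizational difference is that you anchor the expansion of $G$ at $\eta(\mathbf{x})$ directly, while the paper introduces the clean-data Huber minimizer $a(\mathbf{x})$ and the clean kernel estimate $b(\mathbf{x})$ and bounds $|\hat\eta_0-a|$, $|a-b|$, $|b-\eta|$ separately; both work, and the delicate inversion step you correctly flag as the main obstacle is precisely the content of the paper's Lemmas~\ref{lem:e1} and~\ref{lem:e2}.

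One step as written does not go through: you bound the bad-event probability by $\mathbb{P}[\mathcal{E}(\mathbf{X})^c]\lesssim q/N$ via Markov on $\int\mathcal{B}(\mathbf{x})\,d\mathbf{x}$ and claim that $M^2\,\mathbb{P}[\mathcal{E}(\mathbf{X})^c]$ is absorbed into the attacker term. When $q\ll Nh^d$ the attacker term is $T^2q^2/(N^2h^d)$, and $M^2 q/N$ is not dominated by the right-hand side of \eqref{eq:upper}: take $d=1$, $q=\sqrt N$, $h=N^{-1/3}$, so that $q/N=N^{-1/2}$ while every term in \eqref{eq:upper} is $O(N^{-2/3}\ln^2N)$. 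You need the second-moment (Chebyshev) version as well, namely
\begin{equation*}
\mathbb{P}\left[\mathcal{B}(\mathbf{X})>S(\mathbf{X})/2\right]\lesssim \frac{\int\mathcal{B}^2(\mathbf{x})\,d\mathbf{x}}{(Nh^d)^2}\lesssim \frac{q^2}{N^2h^d},
\end{equation*}
combined with your Markov bound to obtain $\mathbb{P}[\mathcal{E}(\mathbf{X})^c]\lesssim(q/N)\min\{q/(Nh^d),1\}$, which (since $T\gtrsim 1$) is then genuinely absorbed. This is exactly what the paper does in \eqref{eq:largeq1}--\eqref{eq:term4}, and your own estimate $\int\mathcal{B}^2(\mathbf{x})\,d\mathbf{x}\lesssim qh^d\min(q,Nh^d)$ already supplies the needed ingredient, so the repair is one line.
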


The detailed proof of Theorem \ref{thm:main} is shown in section \ref{sec:l2} in the appendix. Here we provide an intuitive explanation of \eqref{eq:upper}. The first term in \eqref{eq:upper} is caused by adversarial attack, while the remaining two terms are just the standard nonparametric regression error \cite{tsybakov2009} for clean samples. Therefore we only discuss the first term here. The best strategy for the adversary is to concentrate its attack on a small region. Denote $B_h(\mathbf{x})$ as the ball centering at $\mathbf{x}$ with radius $h$, in which $h$ is the bandwidth parameter in \eqref{eq:eta}. Since the pdf $f$ is both upper and lower bounded, the number of samples within $B_h(x)$ roughly scales as $Nh^d$. Now we discuss two cases. Firstly, if $q\lesssim Nh^d$, with $q$ attacked samples around $\mathbf{x}$, the additional estimation error caused by these adversarial samples roughly scales as $Tq/(Nh^d)$. These attacked samples can affect $\hat{\eta}_0(\mathbf{x})$ for a region with radius roughly $h$, thus the overall additional $\ell_2$ error is $(Tq/(Nh^d))^2 h^d = T^2q^2/(N^2 h^d)$. Secondly, if $q\gtrsim Nh^d$, then the adversary can attack most of samples in a much broader region, whose volume scales as $q/N$. The additional estimation error in this region is proportional to $T$, thus the additional $\ell_2$ error is $T^2q/N$. Combining these two cases yields \eqref{eq:upper}, in which there is a phase transition between $q\lesssim Nh^d$ and $q\gtrsim Nh^d$.

\begin{rmk}
	Theorem \ref{thm:main} is based on the assumption that the pdf $f$ is bounded from below. For the case such that $f$ has bounded support but can approach zero arbitrarily, we have provided an analysis in section \ref{sec:beyond} in the appendix. The result is that
	\begin{eqnarray}
		\mathbb{E}\left[\underset{\mathcal{A}}{\sup}\left(\hat{\eta}_0(\mathbf{X})-\eta(\mathbf{X})\right)^2\right]\lesssim \frac{T^2 q}{N}+h^2 +\frac{1}{Nh^d}.
		\label{eq:upper2}
	\end{eqnarray}
	From \eqref{eq:upper2}, the bound is worse than the case with densities bounded from below if $q\lesssim Nh^d$. Intuitively, in this case, the best strategy for the adversary would be to attack samples in the region with low pdf values.
\end{rmk}

The next theorem shows the bound of $\ell_\infty$ error:
\begin{thm}\label{thm:sup}
	Under Assumption \ref{ass:problem}, \ref{ass:kernel}, \ref{ass:parameter}, if $K(\mathbf{u})$ is monotonic decreasing with respect to $\|u\|$, then
	\begin{eqnarray}
		\mathbb{E}\left[\underset{\mathcal{A}}{\sup}\underset{\mathbf{x}}{\sup}|\hat{\eta}_0(\mathbf{x})-\eta(\mathbf{x})|\right] \lesssim \frac{Tq}{Nh^d} + h + \frac{\ln N}{\sqrt{Nh^d}}.	
		\label{eq:sup}
	\end{eqnarray}
\end{thm}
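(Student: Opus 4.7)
The plan is to reduce the tail event $\hat\eta_0(\mathbf{x}) > \eta(\mathbf{x}) + \Delta$ to a one-sided first-order condition on the Huber objective, and then lift this pointwise condition to a uniform statement over $\mathcal{X}$ by a covering argument. For each fixed $\mathbf{x}$, let $L_N(\mathbf{x},s) = \sum_i K((\mathbf{x}-\mathbf{X}_i)/h)\phi(Y_i-s)$. Convexity of $\phi$ makes $s \mapsto L_N(\mathbf{x},s)$ convex, so $\hat\eta_0(\mathbf{x}) > \eta(\mathbf{x}) + \Delta$ forces $\partial_s L_N(\mathbf{x},\eta(\mathbf{x})+\Delta) \leq 0$, i.e.\ the score $\Psi(\mathbf{x},\Delta) := -\sum_i K((\mathbf{x}-\mathbf{X}_i)/h)\phi'(Y_i-\eta(\mathbf{x})-\Delta)$ is nonpositive. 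I will pick $\Delta = C\bigl(Tq/(Nh^d) + h + \ln N/\sqrt{Nh^d}\bigr)$ for a large enough constant $C$ and show $\Psi(\mathbf{x},\Delta) > 0$ holds uniformly in $\mathbf{x}$ with probability $\geq 1-1/N$; the symmetric lower tail is handled by the same argument with $\Delta$ replaced by $-\Delta$, and the low-probability complementary event contributes at most $2M/N$ to the expectation thanks to the truncation in \eqref{eq:eta}.

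For the pointwise analysis, split $\Psi = \Psi_{\text{clean}} + \Psi_{\text{adv}}$ according to whether $i \notin \mathcal{B}$ or $i \in \mathcal{B}$. Because $|\phi'| \leq 2T$ and $K \leq C_K$, the adversarial piece obeys $|\Psi_{\text{adv}}| \leq 2C_K T q$, which is the source of the $Tq/(Nh^d)$ term. For $\Psi_{\text{clean}}$ I condition on the event $\max_i |W_i| \leq T/2$, which by Assumption \ref{ass:problem}(d) and $T \geq 16\sigma\ln N$ fails with only polynomially small probability. On this event and for any $\mathbf{X}_i \in B(\mathbf{x},h)$, Assumption \ref{ass:problem}(a) and \ref{ass:parameter}(b) give $|Y_i-\eta(\mathbf{x})-\Delta| \leq Lh + T/2 + |\Delta| \leq T$, so $\phi'$ acts as $2(\cdot)$ and $\Psi_{\text{clean}} = 2\Delta S_\mathbf{x} - 2B_\mathbf{x} - 2Z_\mathbf{x}$ with $S_\mathbf{x} = \sum_{i\notin\mathcal{B}} K_i$, $B_\mathbf{x} = \sum_{i\notin\mathcal{B}} K_i(\eta(\mathbf{X}_i)-\eta(\mathbf{x}))$ and $Z_\mathbf{x} = \sum_{i\notin\mathcal{B}} K_i W_i$. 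Lipschitz smoothness yields $|B_\mathbf{x}| \leq Lh \, S_\mathbf{x}$, producing the $h$ term; a Chernoff bound plus Assumption \ref{ass:problem}(b)–(c) and \ref{ass:parameter}(a) gives $S_\mathbf{x} \geq c_1 Nh^d$ with exponentially small failure; and Bernstein's inequality for the sub-exponential sum $Z_\mathbf{x}$ (variance $O(\sigma^2 Nh^d)$, summand bound $O(\sigma T)$) yields $|Z_\mathbf{x}| \lesssim \sigma\sqrt{Nh^d\ln(1/\delta)} + T\ln(1/\delta)$ with probability $1-\delta$. Combining, $\Psi(\mathbf{x},\Delta) \geq 2c_1\Delta Nh^d - 2C_K T q - 2LhC_K Nh^d - 2|Z_\mathbf{x}|$, which is positive once $\Delta$ exceeds a constant multiple of $Tq/(Nh^d) + h + |Z_\mathbf{x}|/(Nh^d)$.

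The hard part will be promoting this pointwise estimate to one uniform in $\mathbf{x}$, since $\hat\eta_0$ itself has no a priori continuity in $\mathbf{x}$ and one cannot directly interpolate the event $\{|\hat\eta_0(\mathbf{x}_k)-\eta(\mathbf{x}_k)|\leq \Delta\}$ from a discrete net. My route around this is to control the modulus of the score $\Psi(\mathbf{x},\Delta)$ instead, whose $\mathbf{x}$-dependence enters only through the kernel weights $K_i(\mathbf{x})$ and the smooth shift $\eta(\mathbf{x})$. The monotonicity assumption on $K$ — the additional hypothesis of Theorem \ref{thm:sup} — makes the level sets of $\mathbf{y} \mapsto K((\mathbf{x}-\mathbf{y})/h)$ Euclidean balls, so shifting $\mathbf{x}$ by $\epsilon$ perturbs each weight only on a spherical shell of volume $O(\epsilon h^{d-1})$; the oscillation of $\Psi$ on each $\epsilon$-ball is then at most $O(T N \epsilon h^{d-1})$, which is negligible once $\epsilon = N^{-p}$ for $p$ large enough. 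A union bound over the resulting $\epsilon$-net of cardinality $O(N^{pd})$ with $\delta = N^{-pd-1}$ turns $\ln(1/\delta)$ into $O(\ln N)$, so $|Z_\mathbf{x}|/(Nh^d) \lesssim \ln N/\sqrt{Nh^d}$ uniformly, using $T \lesssim \ln N$ and Assumption \ref{ass:parameter}(a). Integrating the resulting high-probability bound against the deterministic envelope $|\hat\eta_0-\eta|\leq 2M$ then yields \eqref{eq:sup}.
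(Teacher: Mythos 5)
Your route is genuinely different from the paper's. The paper decomposes $|\hat{\eta}_0-\eta|$ into $|\hat{\eta}_0-a|+|a-b|+|b-\eta|$, where $a$ and $b$ are the Huber and least-squares minimizers computed on the clean labels: the first term is a perturbation bound on the Huber score (Lemma \ref{lem:e2}), the second vanishes once $Z\leq T$ (Lemma \ref{lem:nodiff}), and the third is the classical kernel-regression error, made uniform in $\mathbf{x}$ by slicing the monotone kernel into indicator layers of balls and union-bounding over the at most $dN^{2d+1}$ distinct nearest-neighbor sets (Lemmas \ref{lem:localw} and \ref{lem:K}). You instead run a one-shot first-order-condition argument on the score at $\eta(\mathbf{x})\pm\Delta$ and obtain uniformity from an $\epsilon$-net in $\mathbf{x}$. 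Both uses of the monotonicity of $K$ are legitimate (layer-cake decomposition versus ball-shaped level sets), and your argument is closer to standard M-estimation practice; the paper's decomposition has the advantage that its ingredients are reused verbatim in the proofs of Theorems \ref{thm:main} and \ref{thm:corrected}.

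Two steps need repair. First, the identity $\phi'(Y_i-\eta(\mathbf{x})-\Delta)=2(Y_i-\eta(\mathbf{x})-\Delta)$ for clean $i$ requires $Lh+T/2+\Delta\leq T$, which fails when $q/(Nh^d)$ is of order one (then $\Delta\gtrsim T$); likewise $S_{\mathbf{x}}\gtrsim Nh^d$ can fail if the adversary places all $q$ points in one ball with $q$ comparable to $n_h(\mathbf{x})$. You must first dispose of the regime $q\gtrsim Nh^d$ by the trivial bound $|\hat{\eta}_0-\eta|\leq 2M\lesssim Tq/(Nh^d)$, exactly as the paper does via \eqref{eq:assumeh}. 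Second, and more substantively, the claimed oscillation bound $O(TN\epsilon h^{d-1})$ on each $\epsilon$-ball is not a valid deterministic statement: since $K$ jumps from $c_K$ to $0$ at $\|\mathbf{u}\|=1$, the oscillation of $\Psi$ is $O(T)$ times the \emph{number of samples} landing in the shell $B_h(\mathbf{x})\,\triangle\,B_h(\mathbf{x}')$, and in the worst case all $N$ samples can sit in an arbitrarily thin shell. You need a high-probability uniform count — e.g.\ a Chernoff bound plus a union bound showing that each of the $O(\epsilon^{-d})$ annuli of width $2\epsilon=2N^{-p}$ around the net points contains $O(\ln N)$ samples simultaneously — which gives oscillation $O(T\ln N)=O(\ln^2 N)$. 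This is comparable to, not smaller than, the main term $\Delta\cdot Nh^d\gtrsim \ln N\sqrt{Nh^d}\gtrsim\ln^2 N$ under Assumption \ref{ass:parameter}(a), so the net argument closes only up to constants and only after this probabilistic step is supplied. With these two repairs the proposal yields a correct alternative proof of \eqref{eq:sup}.
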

The detailed proof is in section \ref{sec:linfty} in the appendix. Unlike $\ell_2$ loss, the assumption that $f$ is bounded from below can not be relaxed under $\ell_\infty$ loss. If $f$ can approach zero, then the adversary can just attack the region with low density. As a result, we can only get a trivial bound $R_\infty \lesssim 1$.

We then show the minimax lower bound, which indicates the information theoretic limit of the adversarial nonparametric regression problem. In general, it is impossible to design an estimator with convergence rate faster than the following bound.

\begin{thm}\label{thm:minimax}
	Let $\mathcal{F}$ be the collection of $f,\eta,\mathbb{P_N}$ that satisfy Assumption \ref{ass:problem}, in which $\mathbb{P}_N$ is the distribution of the noise $W_1,\ldots, W_N$. Then 
	\begin{eqnarray}
		\underset{\hat{\eta}}{\inf}\underset{(f,\eta,\mathbb{P}_N)\in \mathcal{F}}{\sup}\mathbb{E}\left[\underset{\mathcal{A}}{\sup}\left(\hat{\eta}(\mathbf{X}) - \eta(\mathbf{X})\right)^2 \right]
		 \gtrsim \left(\frac{q}{N}\right)^\frac{d+2}{d+1} + N^{-\frac{2}{d+2}},
		\label{eq:mmx}
	\end{eqnarray}
	and
	\begin{eqnarray}
		\underset{\hat{\eta}}{\inf}\underset{(f,\eta,\mathbb{P}_N)\in \mathcal{F}}{\sup}\mathbb{E}\left[\underset{\mathcal{A}}{\sup}\underset{\mathbf{x}}{\sup}|\hat{\eta}(\mathbf{x})-\eta(\mathbf{x})|\right]
		\gtrsim \left(\frac{q}{N}\right)^\frac{1}{d+1}+N^{-\frac{1}{d+2}}.
		\label{eq:mmxsup}
	\end{eqnarray}
\end{thm}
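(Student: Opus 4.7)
The plan is to decompose each lower bound into two pieces and derive each separately. The clean-data terms $N^{-2/(d+2)}$ and $N^{-1/(d+2)}$ (obtained at $q=0$) are the standard Lipschitz minimax rates. I would recover them by Tsybakov's textbook construction: place $m \sim r^{-d}$ Lipschitz bump hypotheses of width $r$ and height $Lr$ on a grid in $\mathcal X$, take Gaussian noise (which lies in $\mathcal F$), bound the pairwise KL divergence between the joint data laws by $N L^2 r^{d+2}/\sigma^2$, and apply Fano with $r \sim N^{-1/(d+2)}$. This part does not interact with the adversary.

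The novel content is the adversarial terms $(q/N)^{(d+2)/(d+1)}$ and $(q/N)^{1/(d+1)}$, which I would establish via a contamination-aware Le Cam two-point method. Let $f$ be uniform on $\mathcal X$, $W_i \sim \mathcal N(0,\sigma^2)$, fix an interior point $\mathbf{x}_0 \in \mathcal X$, and compare the hypotheses $\eta_0 \equiv 0$ and $\eta_1(\mathbf x) = L\max\{r - \|\mathbf x - \mathbf{x}_0\|, 0\}$, a Lipschitz cone of radius $r$ and peak $Lr$. Their separations are $\|\eta_0-\eta_1\|_\infty = Lr$ and $\int (\eta_0 - \eta_1)^2 f \gtrsim L^2 r^{d+2}$. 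Since the $\mathbf X$-marginals coincide, a maximal coupling of the single-sample laws $P_{\eta_0}, P_{\eta_1}$ yields per-coordinate disagreement probability
\begin{equation*}
\mathrm{TV}(P_{\eta_0}, P_{\eta_1}) = \int f(\mathbf x)\,\mathrm{TV}\bigl(\mathcal N(0,\sigma^2), \mathcal N(\eta_1(\mathbf x), \sigma^2)\bigr)\,d\mathbf x \lesssim \frac{L}{\sigma}\,r^{d+1},
\end{equation*}
using the standard pointwise Gaussian TV bound. Choosing $r \asymp (q/N)^{1/(d+1)}$ makes the expected number of disagreeing coordinates across $N$ iid samples at most $q/3$, and a Chernoff bound upgrades this to at most $q$ deterministically except on an event of probability $\delta = e^{-\Omega(q)}$.

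On the good event, the attacker under each hypothesis rewrites exactly the disagreeing coordinates using the partner's values, producing identical attacked datasets under both $\eta_0$ and $\eta_1$; on the bad event, either attacker falls back to any fixed strategy. The total variation between the two attacked data distributions is therefore at most $2\delta$, and Le Cam's inequality gives a minimax lower bound of a constant times $\|\eta_0 - \eta_1\|$, which matches \eqref{eq:mmx} and \eqref{eq:mmxsup} after plugging in the separations. The main obstacle is handling the hard constraint $|\mathcal B| \le q$ rigorously: the attacker's randomized rewriting must be expressible as a (possibly randomized) function of the observed data and must respect the worst-case cap, so that both the Chernoff tail and the truncation of $\eta_1$ to $[-M, M]$ can be absorbed into universal constants without corrupting the two-point TV bound. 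A subsidiary check is that $\eta_1$ satisfies the Lipschitz, boundedness, and density requirements of Assumption \ref{ass:problem} uniformly in $r$, which holds for all $r$ below some problem-dependent threshold; otherwise the adversarial term is dominated by the clean one and there is nothing to prove.
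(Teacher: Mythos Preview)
Your proposal is correct and follows essentially the same route as the paper: the same cone-versus-zero two-point construction with Gaussian noise, the same per-sample TV bound $\lesssim (L/\sigma)r^{d+1}$, the same choice $r\asymp (q/N)^{1/(d+1)}$, and the same Chernoff control on the number of corrupted coordinates, with the clean-data rates deferred to Tsybakov. The only cosmetic difference is that the paper packages the coupling step as an explicit mixture identity $(1-\alpha)p_1+\alpha q_1=(1-\alpha)p_2+\alpha q_2$ (its Lemma~\ref{lem:attack}), which lets the attacker decide to corrupt sample $i$ with probability $\alpha_i$ depending only on $\mathbf X_i$ and then draw a fresh label from $q_{ij}$; this sidesteps the ``partner's values'' phrasing and makes it immediate that the attack is a randomized function of the observed data alone.
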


The proof is shown in section \ref{sec:mmx} in the appendix. In the right hand side of \eqref{eq:mmx} and \eqref{eq:mmxsup}, $N^{-2/(d+2)}$ is the standard minimax lower bound for nonparametric estimation \cite{tsybakov2009}, which holds even if there are no adversarial samples. In the appendix, we only prove the lower bound with the first term in the right hand side of \eqref{eq:mmx}. The basic idea is to construct two hypotheses on the regression function $\eta$. The total variation distance between these two hypotheses is not too large, thus the adversary can transform one of them to the other. As a result, after adversarial contamination with a carefully designed strategy, we are no longer able to distinguish between these hypotheses. The lower bounds in \eqref{eq:mmx} and \eqref{eq:mmxsup} can then be constructed accordingly.

Compare Theorem \ref{thm:main}, \ref{thm:sup} and Theorem \ref{thm:minimax}, we have the following findings. We claim that the upper and lower bound nearly match, if these two bounds match up to a polynomial of $\ln N$:
\begin{itemize}
	\item The $\ell_\infty$ error is rate optimal. From \eqref{eq:sup} and \eqref{eq:mmxsup}, with $h\sim \max\{(q/N)^{1/(d+1)}, N^{-1/(d+2)}\}$ and $T\sim \ln N$, the upper and minimax lower bound of $\ell_\infty$ error nearly match.
	
	\item The $\ell_2$ error is rate optimal if $q\lesssim \max\left\{\sqrt{N/\ln^2 N}, N^{d/(d+2)}/\ln^2 N\right\}$. From \eqref{eq:upper} and \eqref{eq:mmx}, let $h\sim N^{-\frac{1}{d+2}}$, the upper and minimax lower bound of $\ell_2$ error match. In fact, in this case, the convergence rate of \eqref{eq:eta} is the same as ordinary kernel regression without adversarial samples, i.e. $h^2+1/(Nh^d)$. With optimal selection of $h$, $\ell_2$ error scales as $N^{-2/(d+2)}$, which is just the standard rate for nonparametric statistics \cite{krzyzak1986rates,tsybakov2009}.
	
	\item The $\ell_2$ error is not rate optimal if $q\gtrsim \max\left\{\sqrt{N/\ln^2 N}, N^{d/(d+2)}/\ln^2 N\right\}$. In this case, if $d\leq 2$, the optimal $h$ in \eqref{eq:upper} is $h\sim (q\ln N/N)^{2/(d+2)}$, and resulting $\ell_2$ error is $R_2\lesssim (q\ln N/N)^{4/(d+2)}$. If $d>2$, then optimal $h$ is $h\sim N^{-1/(d+2)}$, and the $\ell_2$ error is $q\ln^2 N/N+N^{-2/(d+2)}$. For either $d\leq 2$ or $d>2$, these two bounds are worse than the lower bound in \eqref{eq:mmx}.
\end{itemize}
This result indicates that the initial estimator \eqref{eq:eta} is optimal under $\ell_\infty$, or under $\ell_2$ with small $q$. However, under large number of adversarial samples, the $\ell_2$ error becomes suboptimal. %In the next section, we design a corrected estimator to improve the initial estimator \eqref{eq:eta}, which will close the gap between upper and minimax lower bounds.

Now we provide an intuitive understanding of the suboptimality of $\ell_2$ risk with large $q$ using a simple one dimensional example shown in Figure \ref{fig:example}, in which $N=10000$, $h=0.05$, $M=3$, $f(x)=1$ for $x\in (0,1)$, $\eta(x)=\sin(2\pi x)$, and the noise follows standard normal distribution $\mathcal{N}(0,1)$. For each $x$, denote $q_h(x)$, $n_h(x)$ as the number of attacked samples and total samples within $(x-h, x+h)$, respectively. For robust mean estimation problems, the breakdown point is $1/2$ \cite{andrews2015robust}, which also holds locally for nonparametric regression problem. Hence, if $q_h(x)/n_h(x)>1/2$, the estimator will collapse and return erroneous values even if we use Huber cost. In Fig \ref{fig:example}(a), $q=500$, among which $250$ attacked samples are around $x=0.25$, while others are around $x=0.75$. In this case, $q_h(x)/n_h(x)<1/2$ over the whole support. The curve of estimated function is shown in Fig \ref{fig:example}(b). The estimate with \eqref{eq:eta} is significantly better than kernel regression. Then we increase $q$ to $2000$. In this case, $q_h(x)/n_h(x)>1/2$ around $0.25$ and $0.75$ (Fig \ref{fig:example}(c)), thus the estimate fails. The estimated function curve shows an undesirable spike (Fig \ref{fig:example}(d)). 

\begin{figure}[h!]
	\centering	
	\begin{subfigure}{0.24\linewidth}
		\includegraphics[width=\textwidth,height=0.76\textwidth]{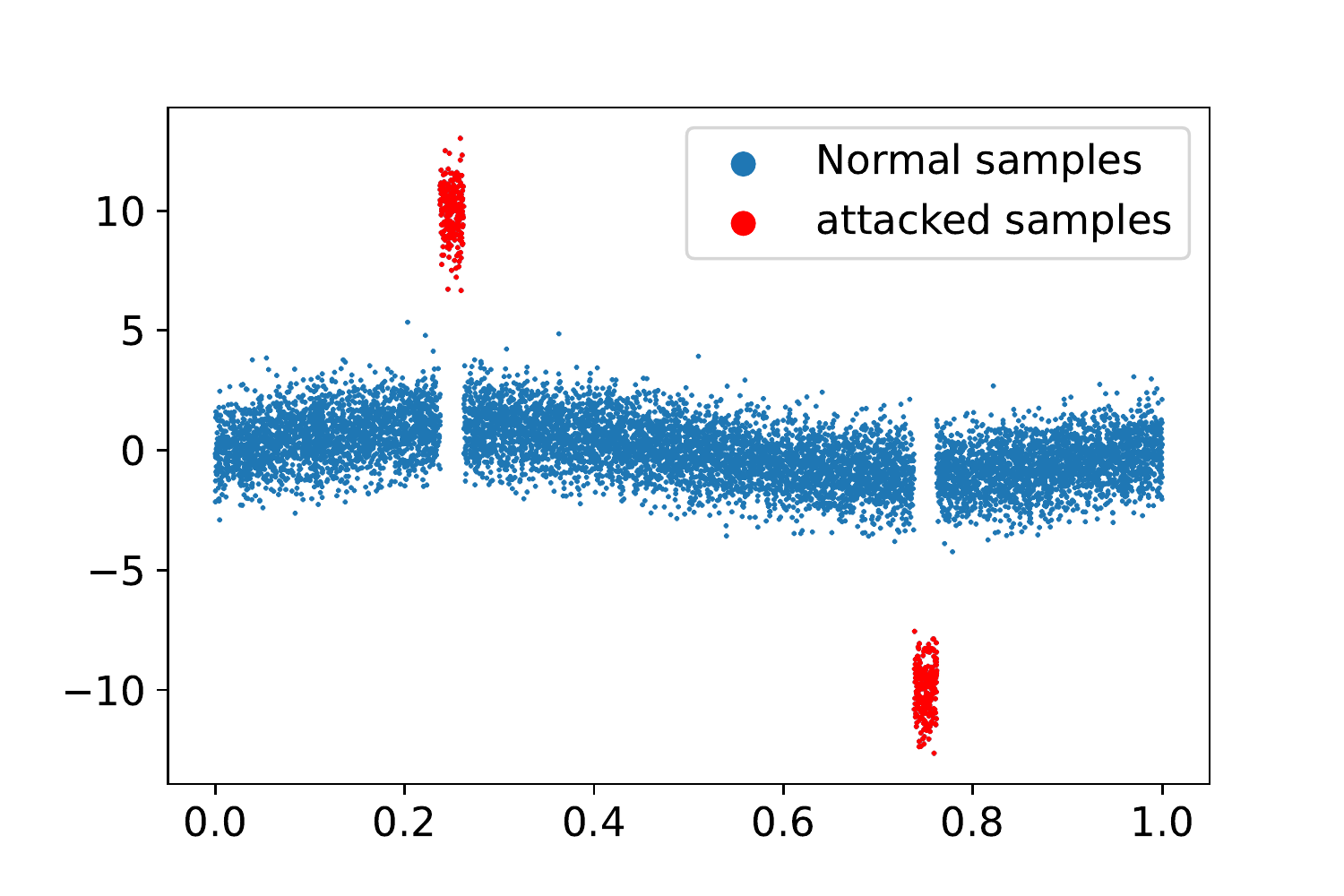}
		\caption{Scatter plots with $q=500$.}
	\end{subfigure}
	\hfill
	\begin{subfigure}{0.24\linewidth}
		\includegraphics[width=\textwidth,height=0.76\textwidth]{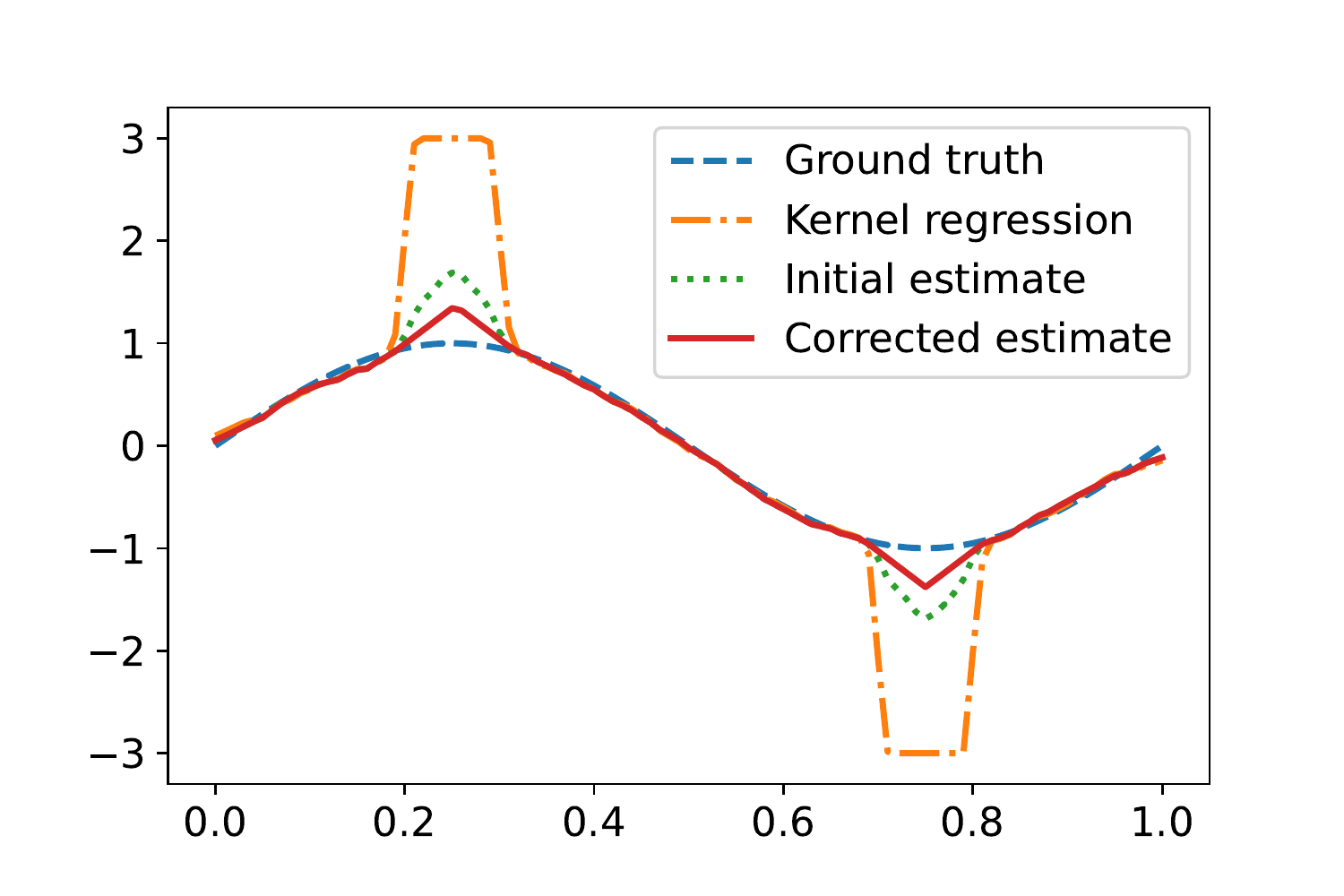}
		\caption{Estimated curves, $q=500$.}
	\end{subfigure}
	\hfill
	\begin{subfigure}{0.24\linewidth}
		\includegraphics[width=\textwidth,height=0.76\textwidth]{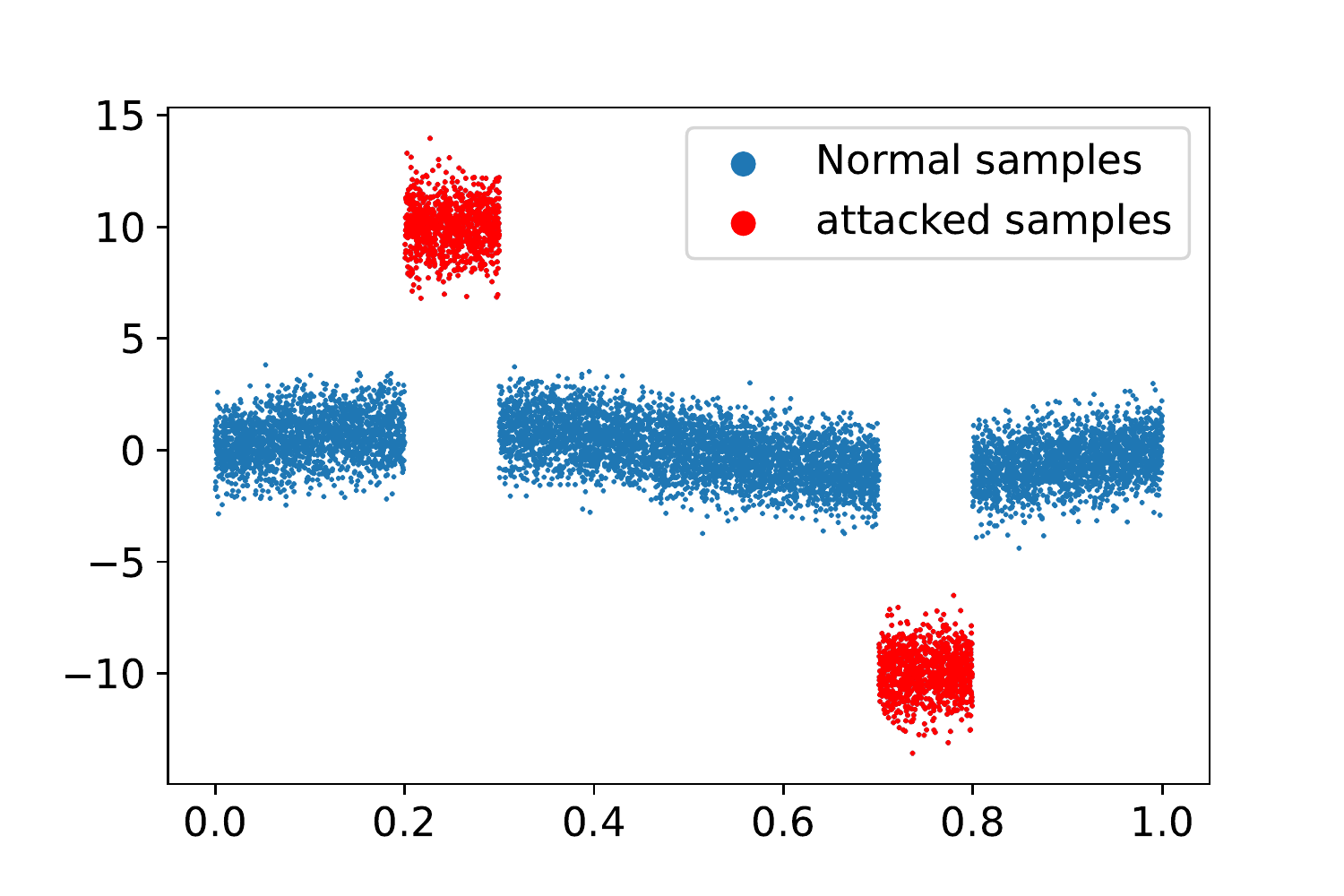}
		\caption{Scatter plots with $q=2000$.}
	\end{subfigure}
	\hfill
	\begin{subfigure}{0.24\linewidth}
		\includegraphics[width=\textwidth,height=0.76\textwidth]{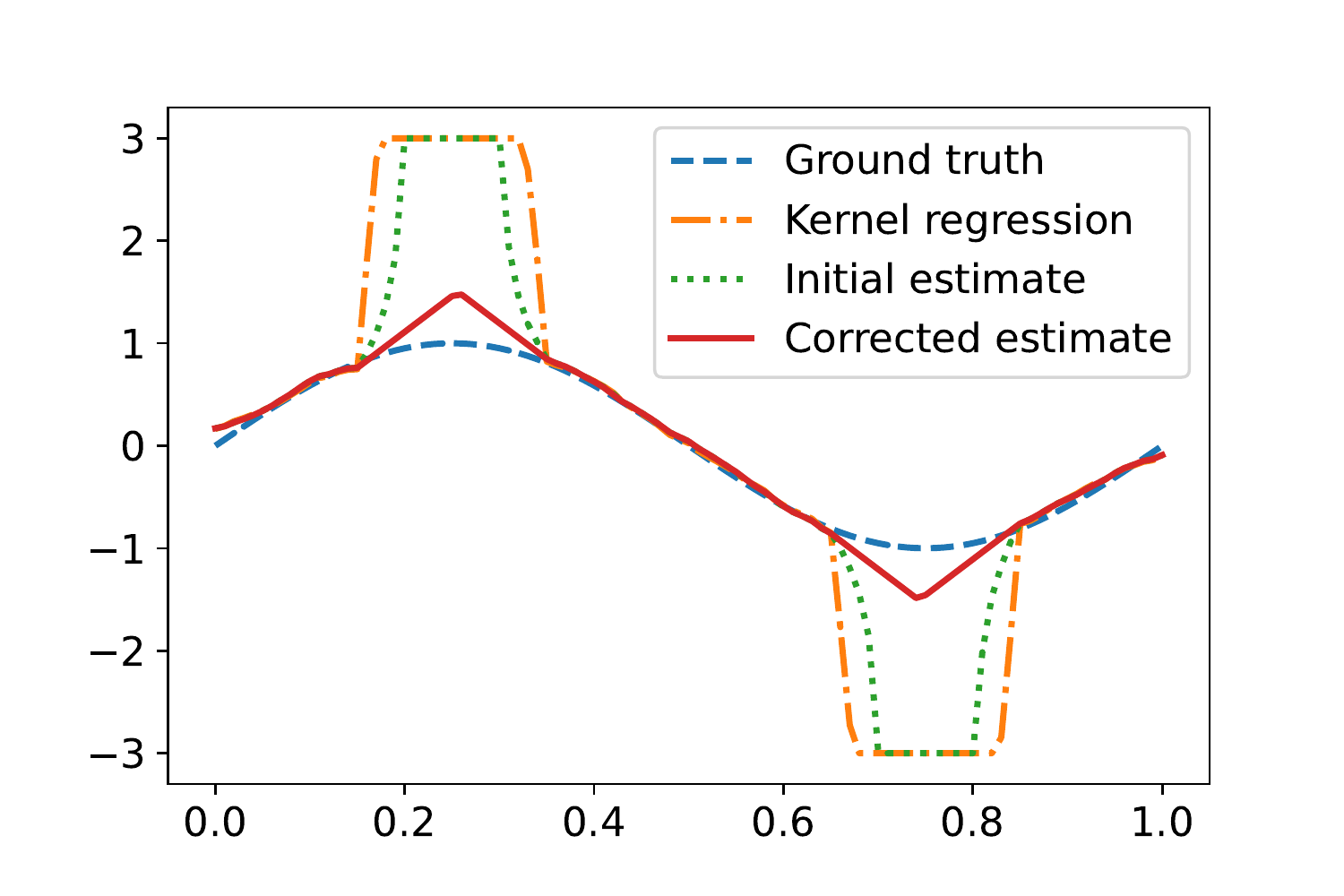}
		\caption{Estimated curves, $q=2000$.}
	\end{subfigure}	
	\caption{A simple example with $q=500$ and $q=2000$. In (a) and (c), red dots are attacked samples, while blue dots are normal samples. In (b) and (d), four curves correspond to ground truth $\eta$, the result of kernel regression, initial estimate and corrected estimate, respectively. With $q=500$, the initial estimate \eqref{eq:eta} works well. However, with $q=2000$, the initial estimate fails, while the corrected regression works well. }\label{fig:example}
\end{figure}

The above example shows that the initial estimator \eqref{eq:eta} fails if the adversary focus its attack at a small region. In this case, the local ratio of attacked samples surpasses the breakdown point, resulting in spikes here. With such strategy and sufficiently large $q$, the initial estimator \eqref{eq:eta} fails to be optimal. Actually, getting a robust estimate of $\eta(\mathbf{x})$ using local training samples around $\mathbf{x}$ only is not enough. To improve the estimator, we exploit the continuity property of $\eta$ (Assumption \ref{ass:problem}(a)), and use the estimate in neighboring regions to correct $\hat{\eta}_0(\mathbf{x})$. Based on such intuition, we propose a projection technique, which will close the gap between the upper and minimax lower bound. The details are shown in the next section. 
\section{Corrected Regression}\label{sec:corrected}

As has been discussed in the previous section, while the initial estimator is already efficient in its own right with small $q$, it does not tolerate larger $q$. In particular, concentrated attack will generate undesirable spikes in $\hat{\eta}_0$. We hope to remove these spikes without introducing two much additional estimation error. Linear filters\footnote{Here linear filter means that the output is linear in the input, i.e. an operator $F$ is linear if for any function $f_1$, $f_2$ and any scalars $\lambda_1$ and $\lambda_2$, $F[\lambda_1f_1+\lambda_2f_2]=\lambda_1 F[f_1]+ \lambda_2F[f_2]$. Alternatively, $F[f]$ is a convolution of $f$ with another function $K_F$. Such convolution can blur the regression estimate.} do not work here since the profile of the regression estimate will be blurred. Therefore, we propose a nonlinear filter as following. It conducts minimum correction (in $\ell_1$ sense) to the initial result $\hat{\eta}_0$, while ensuring that the corrected estimate is Lipschitz. Formally, given the initial estimate $\hat{\eta}_0(\mathbf{x})$, our method solves the following optimization problem
\begin{eqnarray}
	\hat{\eta}_c = \underset{\|\nabla g\|_\infty \leq L}{\arg\min} \|\hat{\eta}_0-g\|_1,
	\label{eq:optim}
\end{eqnarray}
in which
\begin{eqnarray}
	\|\nabla g\|_\infty = \max\left\{\left|\frac{\partial g}{\partial x_1}\right|, \ldots, \left|\frac{\partial g}{\partial x_d}\right| \right\}.
\end{eqnarray}

In section \ref{sec:unique} in the appendix, we prove that the solution to the optimization problem \eqref{eq:optim} is unique. 

From \eqref{eq:optim}, $\hat{\eta}_c$ can be viewed as the projection of the output of initial estimator $\hat{\eta}_0$ into the space of Lipschitz functions. Here we would like to explain intuitively why we use $\ell_1$ distance instead of other metrics in \eqref{eq:optim}. Using the example in Fig.\ref{fig:example}(d) again, it can be observed that at the position of such spikes, $|\eta(\mathbf{x}) - g(\mathbf{x})|$ can be large. In order to ensure successful removal of spikes, we hope that the derivative of such cost should not be too large, otherwise the corrected estimate will tend to be closer to the original one to minimize the cost, thus spikes may not be fully removed. Based on such intuition, $\ell_1$ cost is preferred here, since it has bounded derivatives, while other costs such as $\ell_2$ distance have growing derivatives.

The estimation error of the corrected regression can be bounded by the following theorem.
\begin{thm}\label{thm:corrected}
	(1) Under the same conditions as Theorem \ref{thm:main},
	\begin{eqnarray}
		\mathbb{E}\left[\underset{\mathcal{A}}{\sup}\left(\hat{\eta}_c(\mathbf{X})-\eta(\mathbf{X})\right)^2\right]\lesssim \left(\frac{q\ln N}{N}\right)^\frac{d+2}{d+1}+h^2 + \frac{\ln N}{Nh^d}.
		\label{eq:upperc}
	\end{eqnarray}
	
	(2) Under the same conditions as Theorem \ref{thm:sup},
	\begin{eqnarray}
		\mathbb{E}\left[\underset{\mathcal{A}}{\sup}\underset{\mathbf{x}}{\sup}|\hat{\eta}_c(\mathbf{x})-\eta(\mathbf{x})|\right] \lesssim \frac{Tq}{Nh^d} + h + \frac{\ln N}{\sqrt{Nh^d}}.	
	\end{eqnarray}
\end{thm}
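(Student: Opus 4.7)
The plan decomposes into two parts, following the structure of the theorem. For part~(2), I will establish the pointwise comparison $\|\hat{\eta}_c - \eta\|_\infty \leq 2\|\hat{\eta}_0 - \eta\|_\infty$ and then invoke Theorem~\ref{thm:sup}. The pointwise inequality is proved by contradiction: assume $\hat{\eta}_c(\mathbf{x}_0) > \eta(\mathbf{x}_0) + 2b$ for some $\mathbf{x}_0 \in \mathcal{X}$ with $b := \|\hat{\eta}_0 - \eta\|_\infty$. Since both $\hat{\eta}_c$ and $\eta$ are $L$-Lipschitz, their difference is $2L$-Lipschitz, so the strict inequality persists on the ball $B(\mathbf{x}_0, \epsilon/(2L))$ where $\epsilon := \hat{\eta}_c(\mathbf{x}_0) - \eta(\mathbf{x}_0) - 2b > 0$; this ball meets $\mathcal{X}$ in a set of positive volume by Assumption~\ref{ass:problem}(c). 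On that set, $\hat{\eta}_c > \eta + 2b \geq \hat{\eta}_0 + b > \hat{\eta}_0$. The competitor $g := \min(\hat{\eta}_c, \eta + 2b)$ is the minimum of two $L$-Lipschitz functions, hence itself $L$-Lipschitz, agrees with $\hat{\eta}_c$ outside the ball, and strictly reduces the $\ell_1$ distance to $\hat{\eta}_0$ inside, contradicting the optimality of $\hat{\eta}_c$ in \eqref{eq:optim}. A symmetric argument bounds $\hat{\eta}_c - \eta$ from below.

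For part~(1), the approach rests on three ingredients: (a) the $\ell_1$-optimality $\|\hat{\eta}_c - \eta\|_1 \leq 2\|\hat{\eta}_0 - \eta\|_1$ (triangle inequality together with feasibility of $\eta$); (b) a Fubini-based bound $\|\hat{\eta}_0 - \eta\|_1 \lesssim Tq/N + h + \sqrt{\ln N/(Nh^d)}$ obtained by splitting $\mathcal{X}$ into the attacked and clean regions and integrating the per-point bias estimate from the proof of Theorem~\ref{thm:main}; and (c) the Lipschitz volume bump $\|f\|_\infty^{d+1} \lesssim \|f\|_1$ valid for any $2L$-Lipschitz $f$ on $\mathcal{X}$, which follows because $|f(\mathbf{x}_0)|=r$ forces $|f|\geq r/2$ on a ball of radius $r/(4L)$ whose intersection with $\mathcal{X}$ has volume $\gtrsim r^d$ by Assumption~\ref{ass:problem}(c). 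Applying (c) to $\hat{\eta}_c - \eta$ and combining with the interpolation $\|f\|_2^2 \leq \|f\|_\infty \|f\|_1$ yields $\|\hat{\eta}_c - \eta\|_2^2 \lesssim \|\hat{\eta}_c - \eta\|_1^{(d+2)/(d+1)}$, whose attack contribution produces exactly $(q\ln N/N)^{(d+2)/(d+1)}$. To recover the tight clean rate $h^2 + \ln N/(Nh^d)$ I would combine this Lipschitz-interpolation with the direct split $\|\hat{\eta}_c - \eta\|_2^2 \leq 2\|\hat{\eta}_c - \hat{\eta}_0\|_2^2 + 2\|\hat{\eta}_0 - \eta\|_2^2$, controlling the second summand by Theorem~\ref{thm:main} and the first by $\|\hat{\eta}_c - \hat{\eta}_0\|_\infty \cdot \|\hat{\eta}_c - \hat{\eta}_0\|_1$ using part~(2) together with $\|\hat{\eta}_c - \hat{\eta}_0\|_1 \leq \|\hat{\eta}_0 - \eta\|_1$.

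The main obstacle will be reconciling the two bounds on the clean term. The Lipschitz-interpolation route is sharp when the attack dominates but produces the suboptimal clean rate $(h + \sqrt{\ln N/(Nh^d)})^{(d+2)/(d+1)}$, which strictly exceeds the target $h^2 + \ln N/(Nh^d)$ whenever the clean rate is smaller than $1$; conversely, the Theorem~\ref{thm:main}-based direct bound is tight for the clean rate but loose for the attack since it gives $T^2q^2/(N^2h^d)$ rather than $(q\ln N/N)^{(d+2)/(d+1)}$. The hard part is carrying out a careful regime analysis in $(q, N, h, d)$ to verify that one of the two derivations always beats the stated target, and absorbing the cross terms (for instance $\Delta_c \cdot Tq/N$ and $\Delta_c \cdot Tq/(Nh^d)$ with $\Delta_c := h + \sqrt{\ln N/(Nh^d)}$) into the target via weighted Young's inequalities.
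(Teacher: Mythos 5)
Your argument for part (2) is sound and is a genuinely different route from the paper's: you obtain $\|\hat{\eta}_c-\eta\|_\infty\leq 2\|\hat{\eta}_0-\eta\|_\infty$ by exhibiting the truncated competitor $\min(\hat{\eta}_c,\eta+2b)$, whereas the paper first proves that the projection $F$ in \eqref{eq:optim} is monotone (Lemma \ref{lem:monotone}) and then uses $F[\eta+c]=\eta+c$ to get the same conclusion with constant $1$. Both give the rate of Theorem \ref{thm:sup}, and your competitor argument is arguably more self-contained.

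Part (1), however, has a genuine gap, and it is not one that a ``careful regime analysis'' can close. Your route 2 is structurally incapable of reaching the target: any decomposition $\|\hat{\eta}_c-\eta\|_2^2\leq 2\|\hat{\eta}_c-\hat{\eta}_0\|_2^2+2\|\hat{\eta}_0-\eta\|_2^2$ carries $\|\hat{\eta}_0-\eta\|_2^2$ as an additive term, and under a concentrated attack this quantity is genuinely of order $T^2\frac{q}{N}\min\{\frac{q}{Nh^d},1\}$ (spikes of height $\Theta(1)$ over volume $\Theta(q/N)$) --- larger than $(q\ln N/N)^{(d+2)/(d+1)}$ precisely in the regime where the correction is supposed to help. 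Your route 1 interpolates the \emph{total} $\ell_1$ error, so the clean contribution $h+\sqrt{\ln N/(Nh^d)}$ gets raised only to the power $(d+2)/(d+1)<2$ and exceeds $h^2+\ln N/(Nh^d)$. The minimum of the two routes therefore fails at concrete parameter points: for $d=2$, $h=N^{-1/4}$, $q\sim N^{5/8}/\ln N$, the claimed bound is of order $\ln N\cdot N^{-1/2}$, while route 1 gives at best $(\ln N)^{2/3}N^{-1/3}$ and route 2 at best $\ln N\cdot N^{-3/8}$. The missing idea is to decouple the attack error from the clean error \emph{before} applying the lossy Lipschitz interpolation. The paper does this by bounding $|\hat{\eta}_0-\eta|$ pointwise by $\delta(\mathbf{x})=(\text{attack part depending on }q_h(\mathbf{x}))+\Delta$ with $\Delta\sim h+\sqrt{\ln N/(Nh^d)}$ a constant, sandwiching $\eta-\delta\leq\hat{\eta}_0\leq\eta+\delta$, pushing the sandwich through $F$ by monotonicity, and then measuring $F[\eta\pm\delta]$ against the \emph{shifted} Lipschitz reference $\eta\pm\Delta$ rather than against $\eta$. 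The difference $F[\eta\pm\delta]-(\eta\pm\Delta)$ has $\ell_1$ norm controlled by the attack part alone ($\lesssim Tq/N$), so your volume-bump interpolation applied to it produces exactly $(Tq/N)^{(d+2)/(d+1)}$, while the clean error re-enters only additively as $\Delta^2\lesssim h^2+\ln N/(Nh^d)$ (Lemma \ref{lem:l2}). Without some such pointwise separation, your two bounds cannot be reconciled.
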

The proof is shown in section \ref{sec:converge} in the appendix. Here we provide a brief idea of the proof. For the error of $\hat{\eta}_0$ in \eqref{eq:upper}, the first term is caused by adversarial samples, while the second and third term are just usual regression error. The latter one nearly remains the same after filtering, while the impact of the former error is significantly reduced. In particular, the $\ell_1$ additional estimation error can be bounded first. This bound can then be used to infer $\ell_2$ and $\ell_\infty$ error caused by adversarial samples, using the property that $\hat{\eta}_c$ is Lipschitz. From \eqref{eq:upperc}, compared with Theorem \ref{thm:minimax}, with $T\sim \ln N$ and a proper $h$, the upper and lower bound nearly match. 

Now we discuss the practical implementation. \eqref{eq:optim} can not be calculated directly for a continuous function. Therefore, we find an approximate numerical solution instead. The detail of practical implementation is shown in section \ref{sec:implement} in the appendix. 

Despite the optimal sample complexity, the computation of the corrected estimator is expensive for high dimensional distributions. It would be an interesting future direction to improve the computational complexity on dimensionality. Currently, our method is designed mainly for low dimensional problems.

\section{Numerical Examples}\label{sec:numerical}
In this section we show some numerical experiments. In particular, we show the curve of the growth of mean square error over the attacked sample size $q$. More numerical results are shown in the appendix.

For each case, we generate $N=10000$ training samples, with each sample follows uniform distribution in $[0,1]^d$. The kernel function is
\begin{eqnarray}
	K(u) = 2-|u|, \forall |u|\leq 1.
\end{eqnarray}

We compare the performance of kernel regression, the median-of-means method, trimmed mean, initial estimate, and the corrected estimation under multiple attack strategies. For kernel regression, the output is $\max(\min(\hat{\eta}_{NW}, M), -M)$, in which $\hat{\eta}_{NW}$ is the simple kernel regression defined in \eqref{eq:simple}. We truncate the result into $[-M, M]$ for a fair comparison with robust estimators.  For the median-of-means method, we divide the training samples into $b=20$ groups randomly, and then conduct kernel regression for each group and then find the median, i.e.
\begin{eqnarray}
	\hat{\eta}_{MoM} = \Clip(\med(\{\hat{\eta}_{NW}^{(1)}, \ldots, \hat{\eta}_{NW}^{(b)}\}), [-M, M]),
\end{eqnarray}
in which $\Clip(u, [-M, M])=\max(\min(x, M), -M)$ projects the value onto $[-M, M]$, and $\med$ denotes the median. For trimmed mean regression, the trim fraction is $0.2$.

For the initial estimator \eqref{eq:eta}, the parameters are $T=1$ and $M=3$. The corrected estimate uses (3) in the appendix. For $d=1$, the grid count is $m=50$. For $d=2$, $m_1=m_2=20$. Consider that the optimal bandwidth ($h$ in \eqref{eq:eta}) need to increase with the dimension, in \eqref{eq:simple}, the bandwidths of all these four methods are set to be $h=0.03$ for one dimensional distribution, and $h=0.1$ for two dimensional case. Here $M$ and $h$ satisfy Assumption \ref{ass:parameter}(a) and (c), while $T$ is smaller than the requirement in Assumption \ref{ass:parameter}(b). As was already discussed earlier, the parameter selection rule in Assumption \ref{ass:parameter} is designed mainly for theoretical analysis, and does not need to be exactly satisfied in practice.

The attack strategies are designed as following. Let $q=500k$ for $k=0,1,\ldots, 10$.
\begin{defi}
	There are three strategies, namely, random attack, one directional attack, and concentrated attack, which are defined as following:
	
	(1) Random Attack. The attacker randomly select $q$ samples among the training data to attack. The value of each attacked sample is $-10$ or $10$ with equal probability;
	
	(2) One directional Attack. The attacker randomly select $q$ samples among the training data to attack. The value of all attacked samples are $10$;
	
	(3) Concentrated Attack. The attacker pick two random locations $\mathbf{c}_1$, $\mathbf{c}_2$ that are uniformly distributed in $[0,1]^d$. For $\lfloor q/2\rfloor$ samples that are closest to $\mathbf{c}_1$, modify their values to $10$. For $\lfloor q/2\rfloor$ samples that are closest to $\mathbf{c}_2$, modify their values to $-10$.
\end{defi}

\begin{figure}[h!]
	\centering
	\begin{subfigure}{0.32\linewidth}
		\includegraphics[width=\textwidth,height=0.76\textwidth]{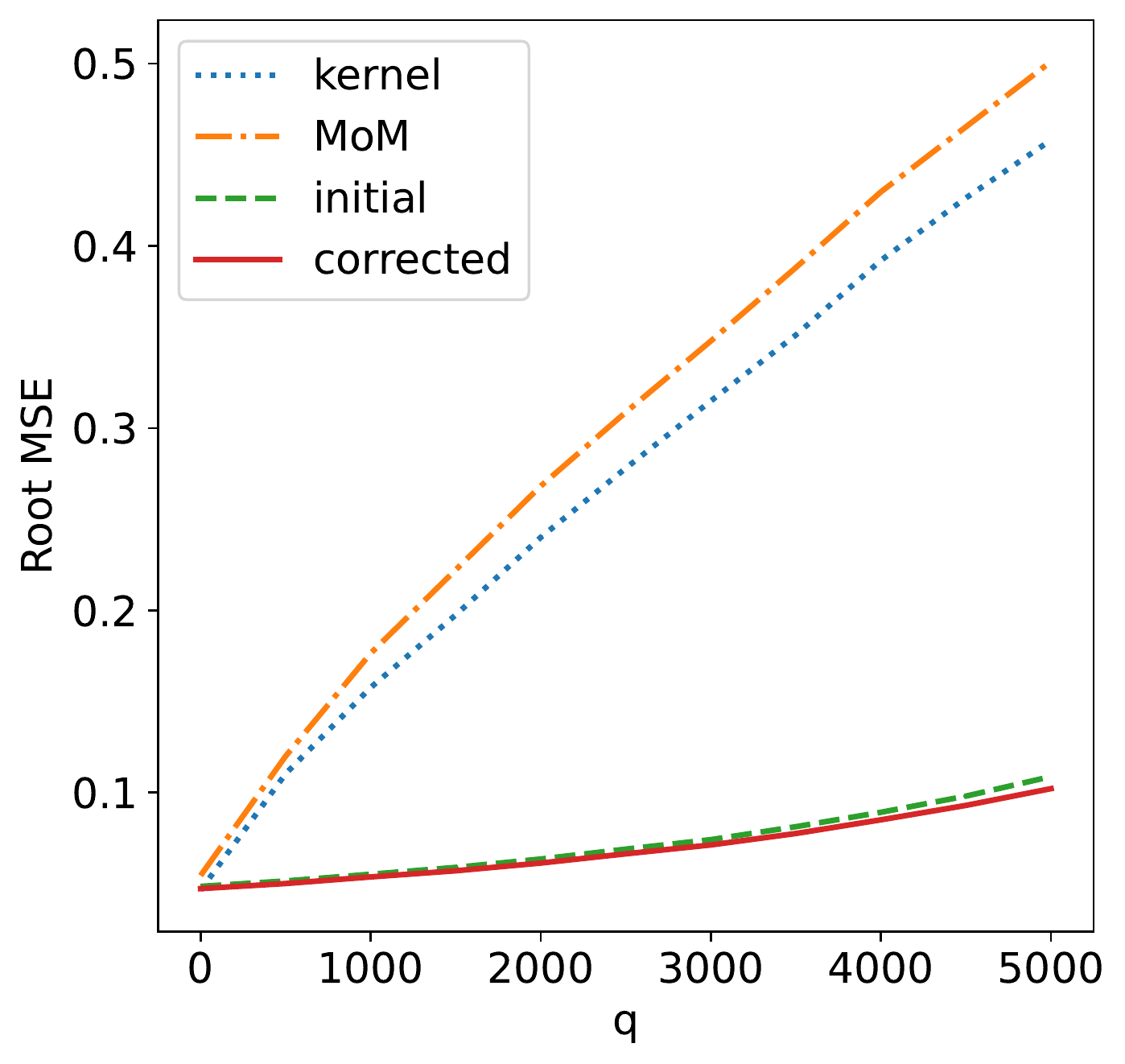}
		\caption{Squared root of $\ell_2$ error, random attack.}
	\end{subfigure}
	\begin{subfigure}{0.32\linewidth}
		\includegraphics[width=\textwidth,height=0.76\textwidth]{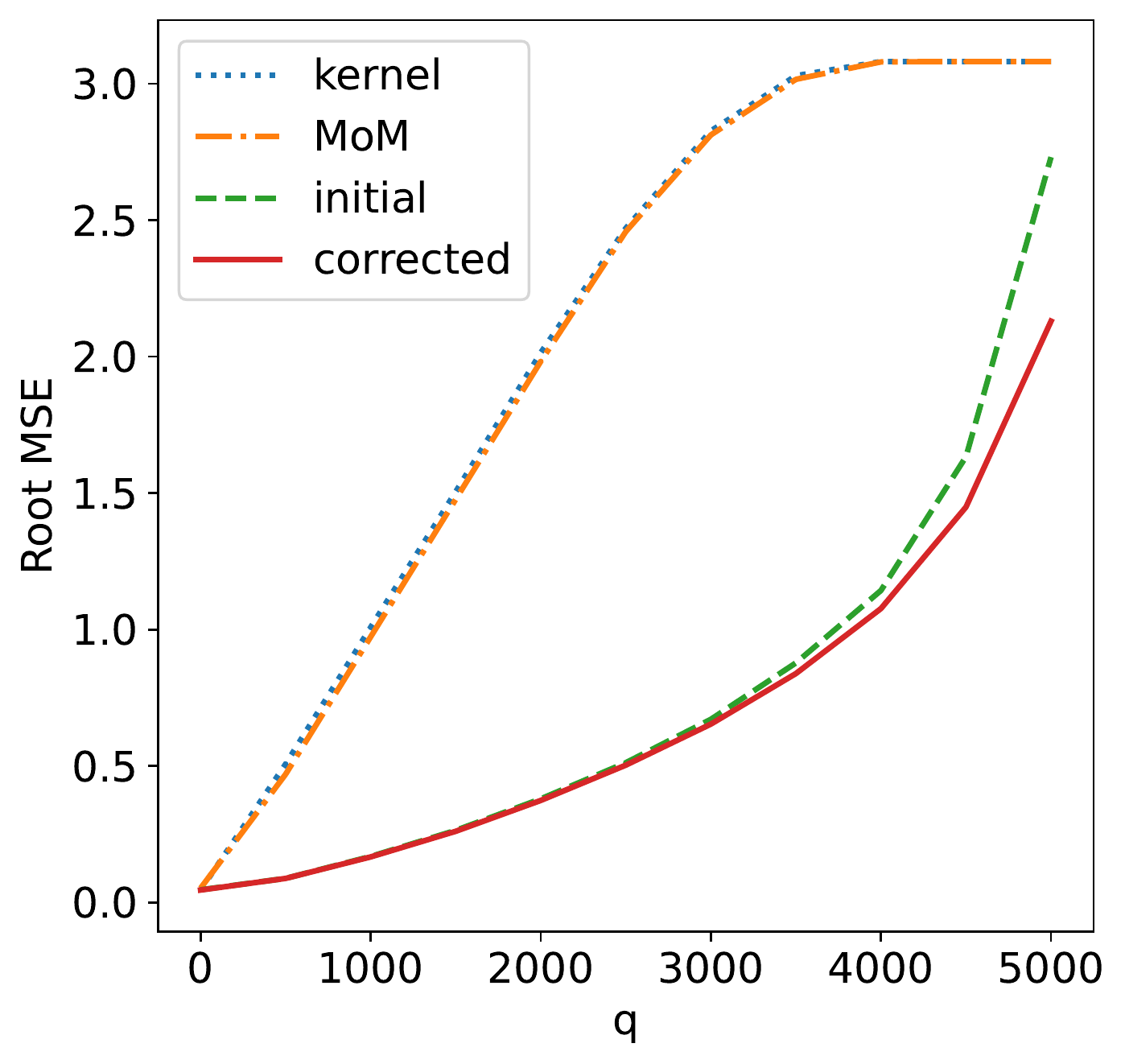}
		\caption{Squared root of $\ell_2$ error, one directional attack.}
	\end{subfigure}	
	\begin{subfigure}{0.32\linewidth}
		\includegraphics[width=\textwidth,height=0.76\textwidth]{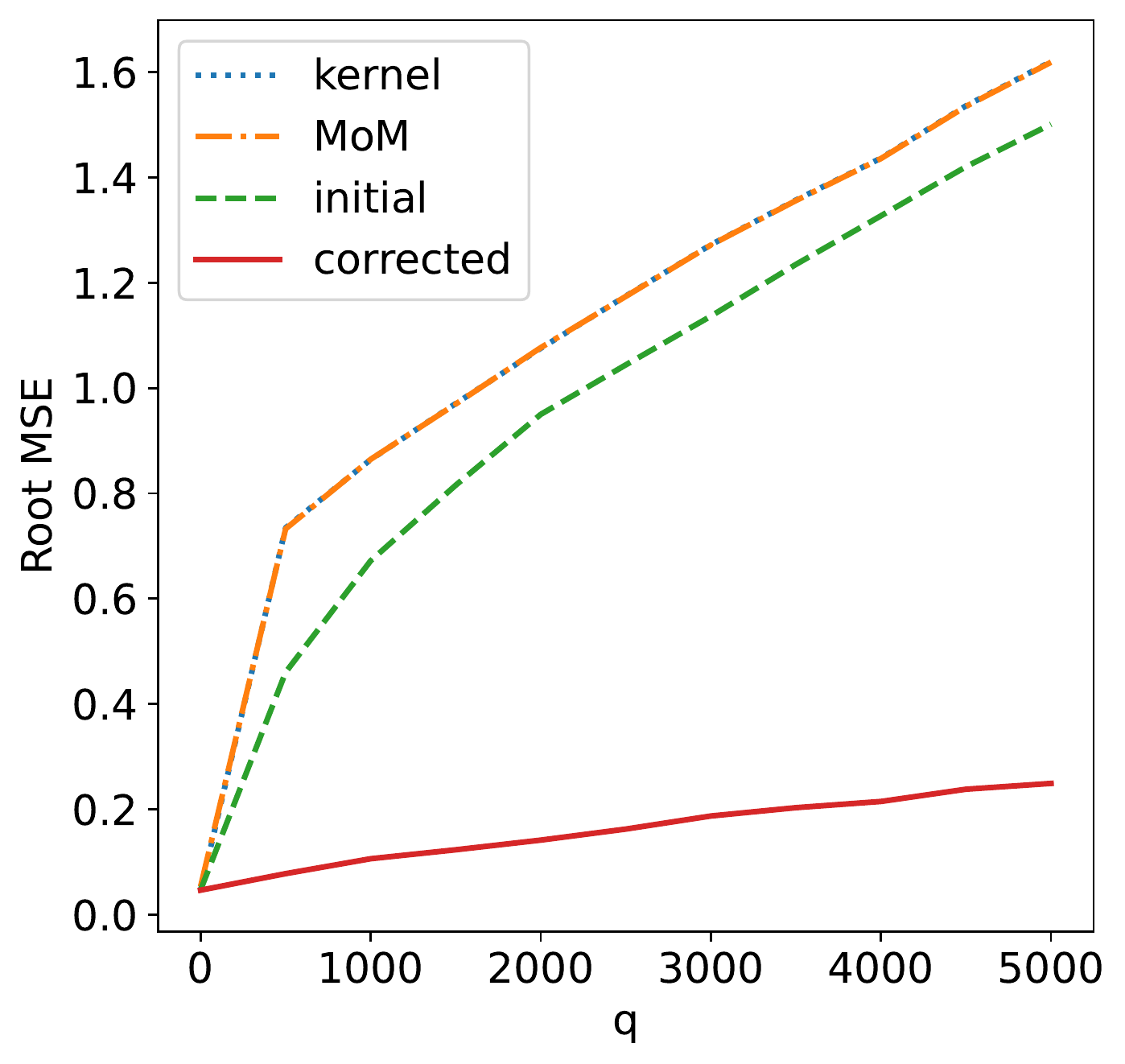}
		\caption{Squared root of $\ell_2$ error, concentrated attack.}
	\end{subfigure}	
	\hfill
	\begin{subfigure}{0.32\linewidth}
		\includegraphics[width=\textwidth,height=0.76\textwidth]{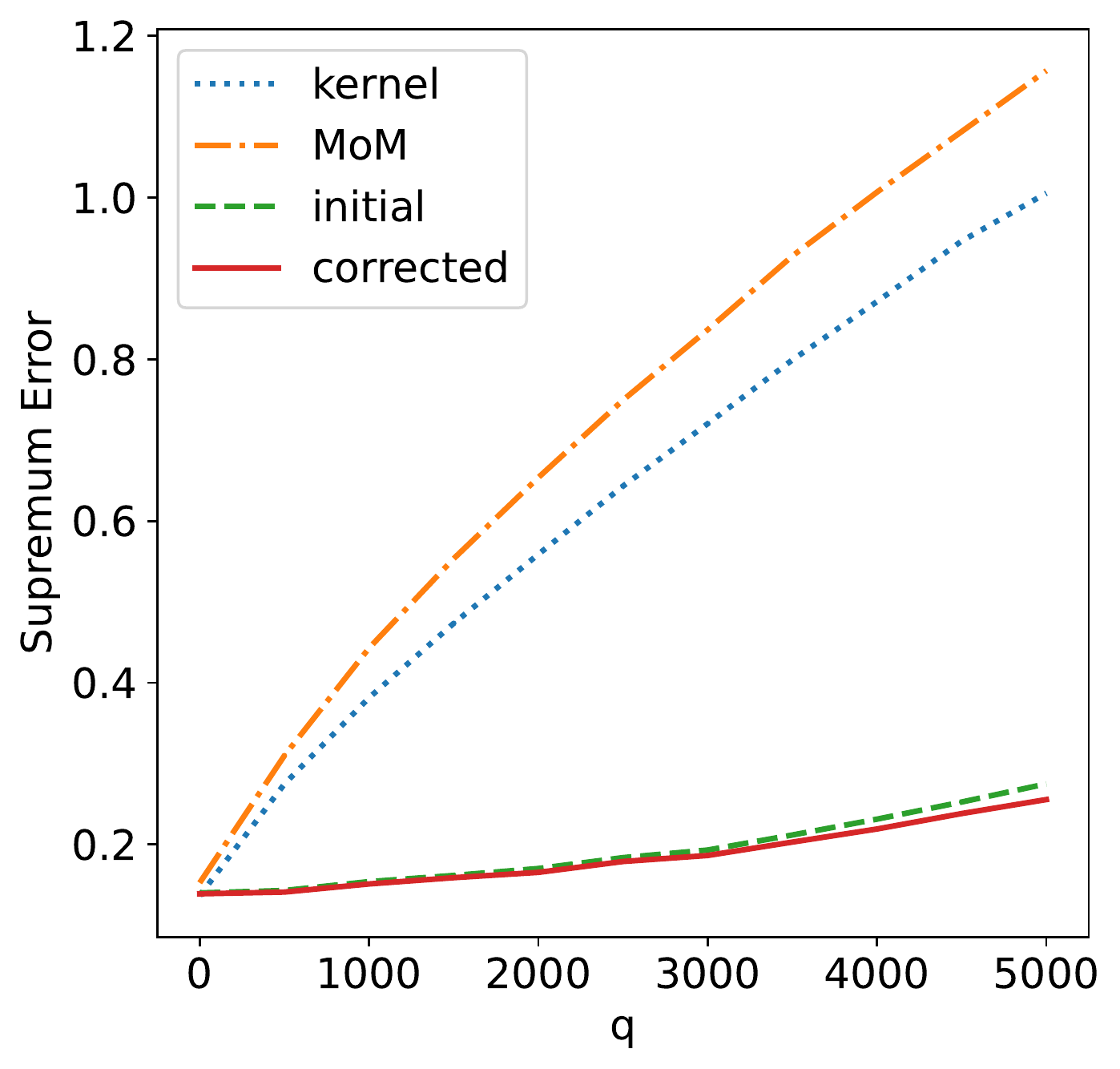}
		\caption{$\ell_\infty$ error, random attack.}
	\end{subfigure}
	\begin{subfigure}{0.32\linewidth}
		\includegraphics[width=\textwidth,height=0.76\textwidth]{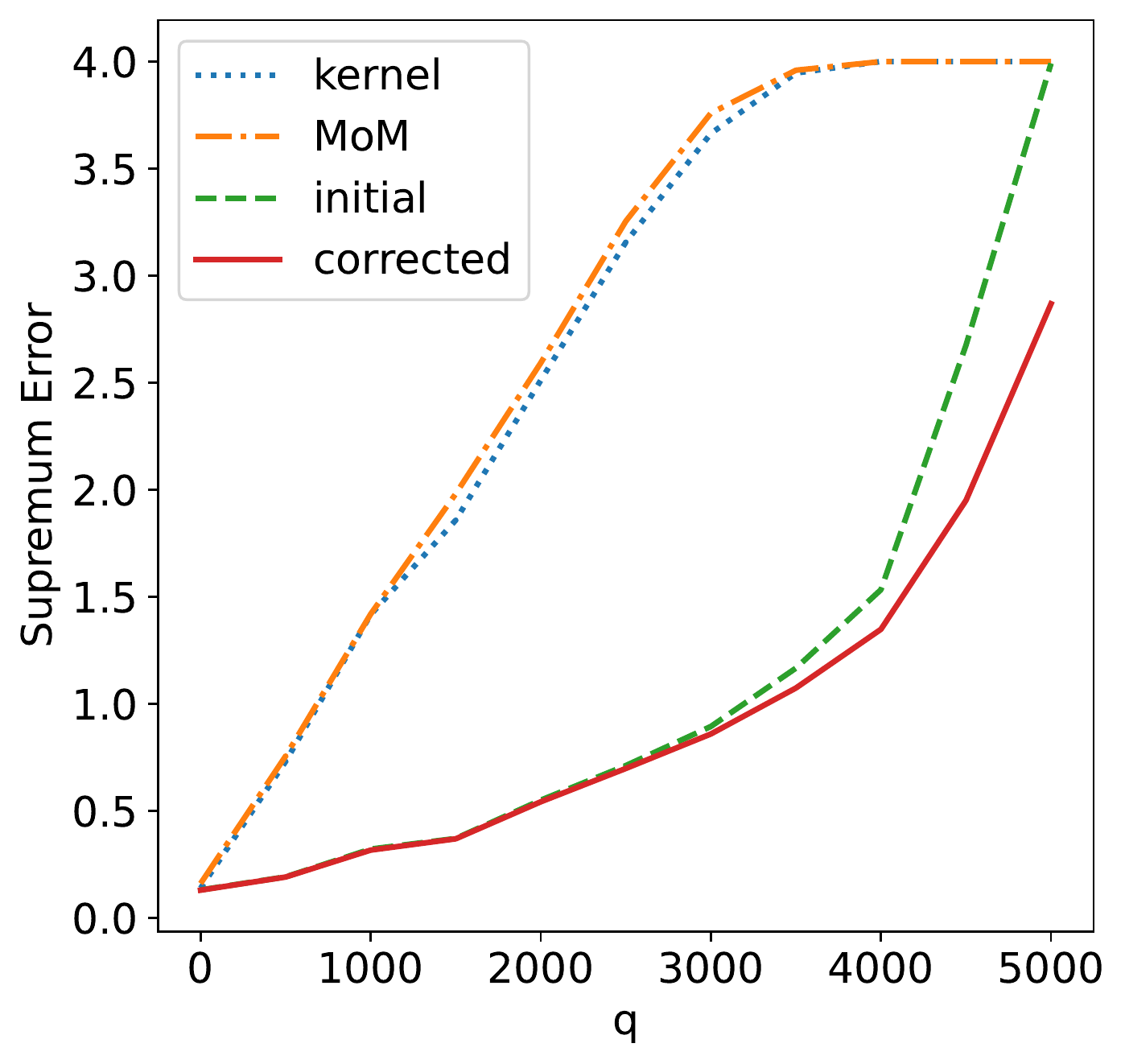}
		\caption{$\ell_\infty$ error, one directional attack.}
	\end{subfigure}
	\begin{subfigure}{0.32\linewidth}
		\includegraphics[width=\textwidth,height=0.76\textwidth]{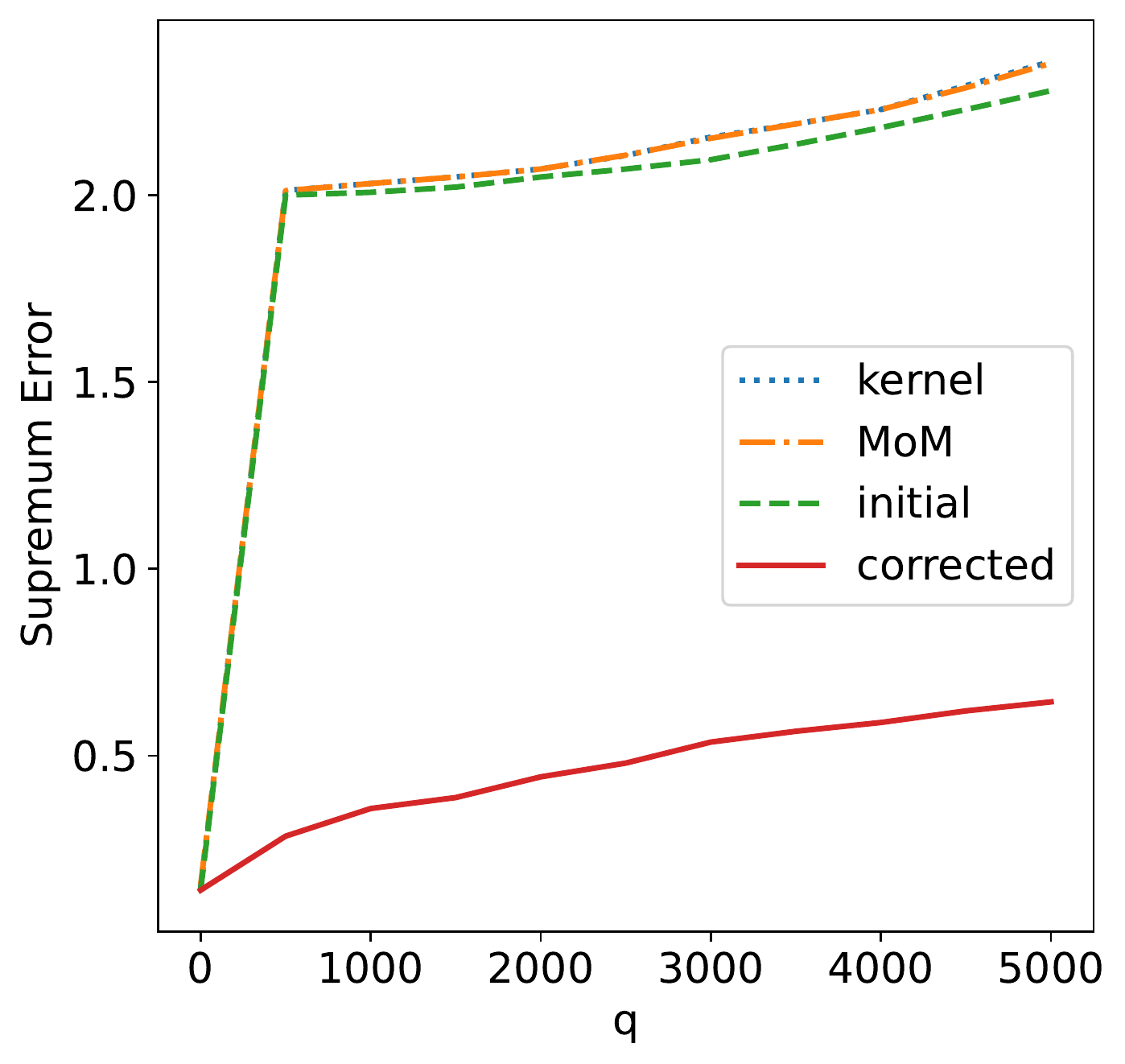}
		\caption{$\ell_\infty$ error, concentrated attack.}
	\end{subfigure}
	\caption{Comparison of $\ell_2$ and $\ell_\infty$ error between various methods for $d=1$.}
	\label{fig:err_1d}
	\vspace{-5mm}
\end{figure}
For one dimensional distribution, let the ground truth be
\begin{eqnarray}
	\eta_1(x) = \sin(2\pi x).
\end{eqnarray}
For two dimensional distribution,
\begin{eqnarray}
	\eta(\mathbf{x}) = \sin(2\pi x_1) + \cos(2\pi x_2).
\end{eqnarray}

The noise follows standard Gaussian distribution $\mathcal{N}(0,1)$. The performances are evaluated using square root of $\ell_2$ error, as well as $\ell_\infty$ error. The results are shown in Figure \ref{fig:err_1d} and \ref{fig:err_2d} for one and two dimensional distributions, respectively. In these figures, each point is the average over $1000$ independent trials. 

\begin{figure}[h!]
	\centering
	\begin{subfigure}{0.32\linewidth}
		\includegraphics[width=\textwidth,height=0.76\textwidth]{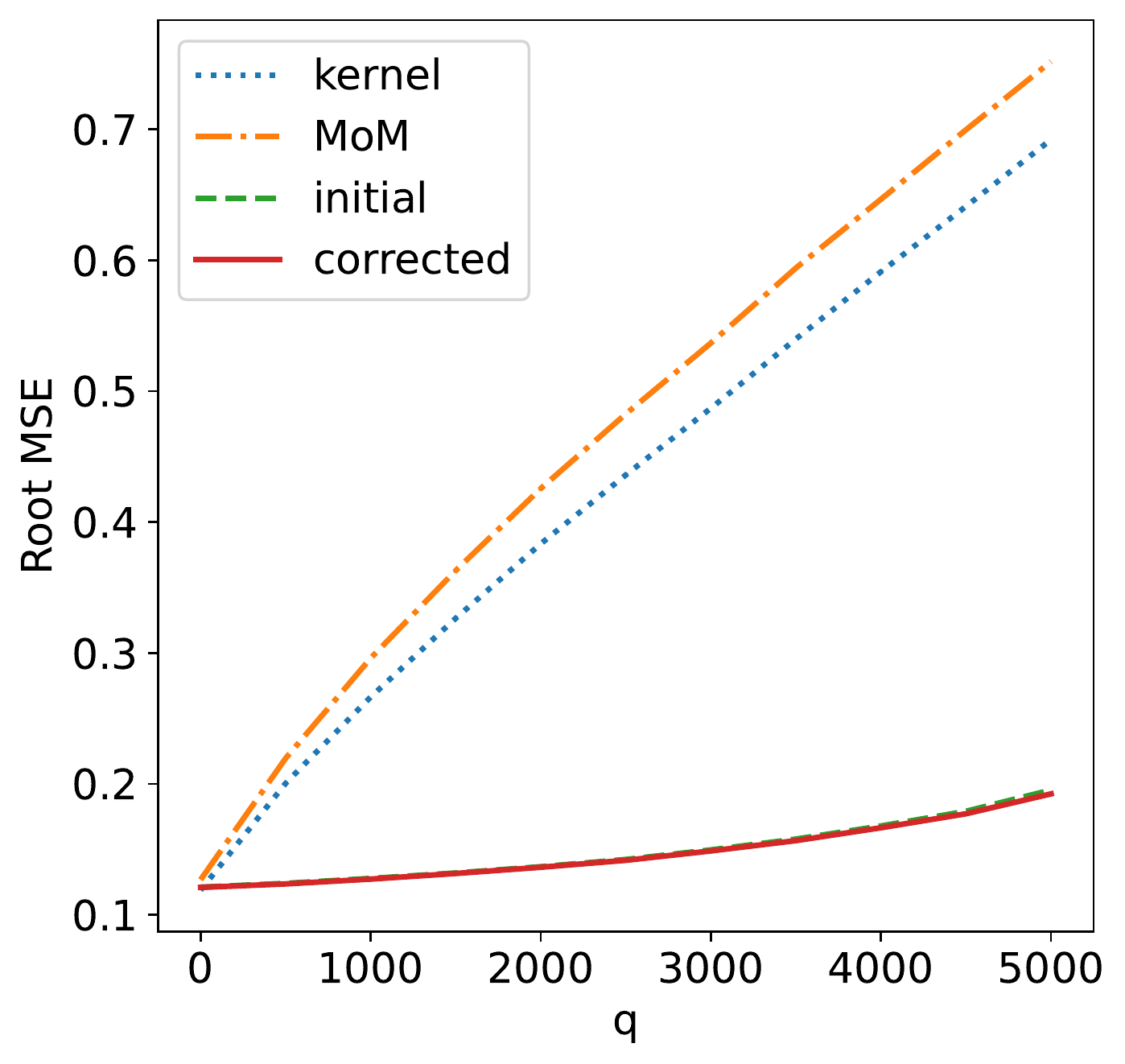}
		\caption{Squared root of $\ell_2$ error, random attack.}
	\end{subfigure}
	\begin{subfigure}{0.32\linewidth}
		\includegraphics[width=\textwidth,height=0.76\textwidth]{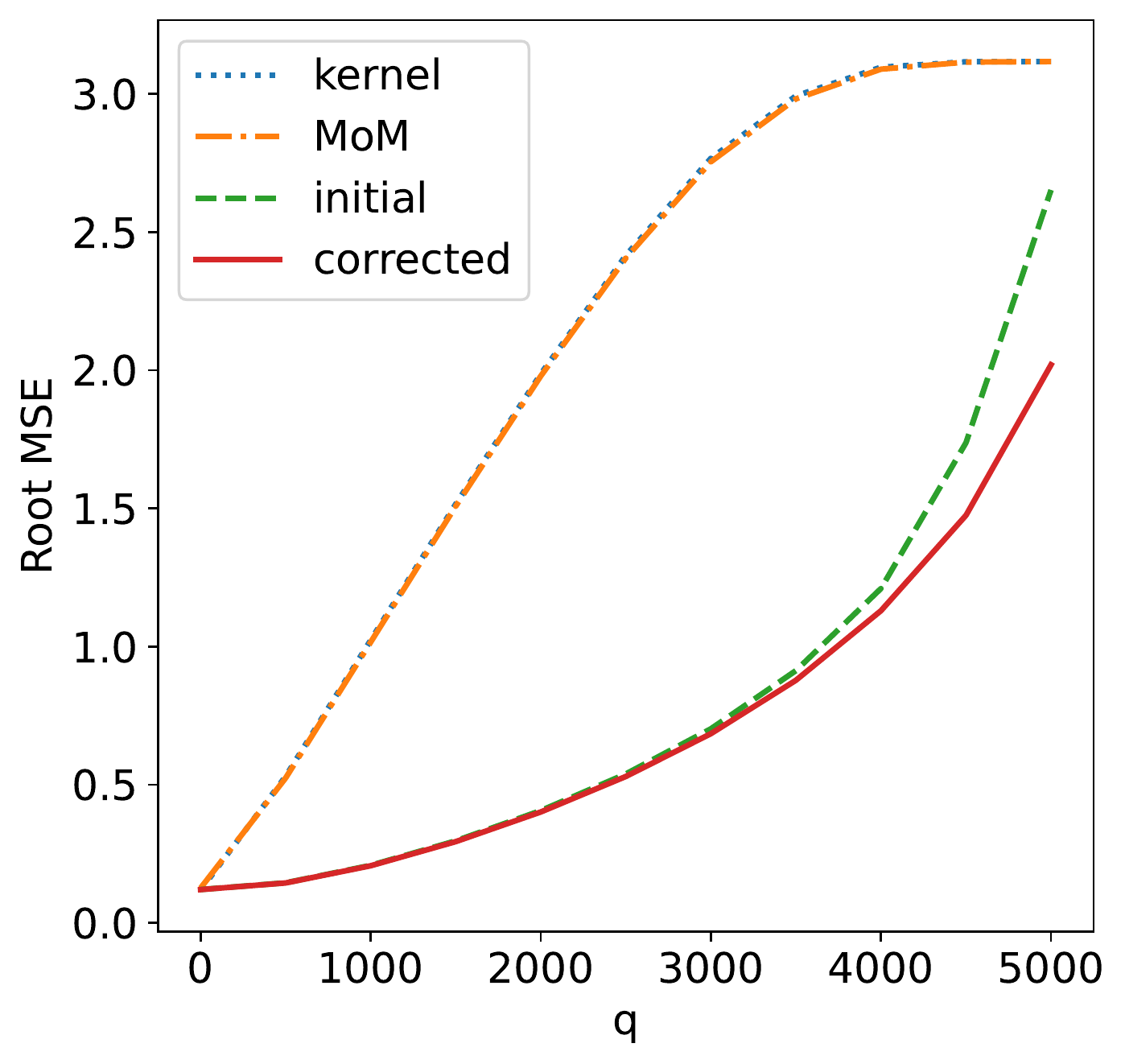}
		\caption{Squared root of $\ell_2$ error, one directional attack.}
	\end{subfigure}	
	\begin{subfigure}{0.32\linewidth}
		\includegraphics[width=\textwidth,height=0.76\textwidth]{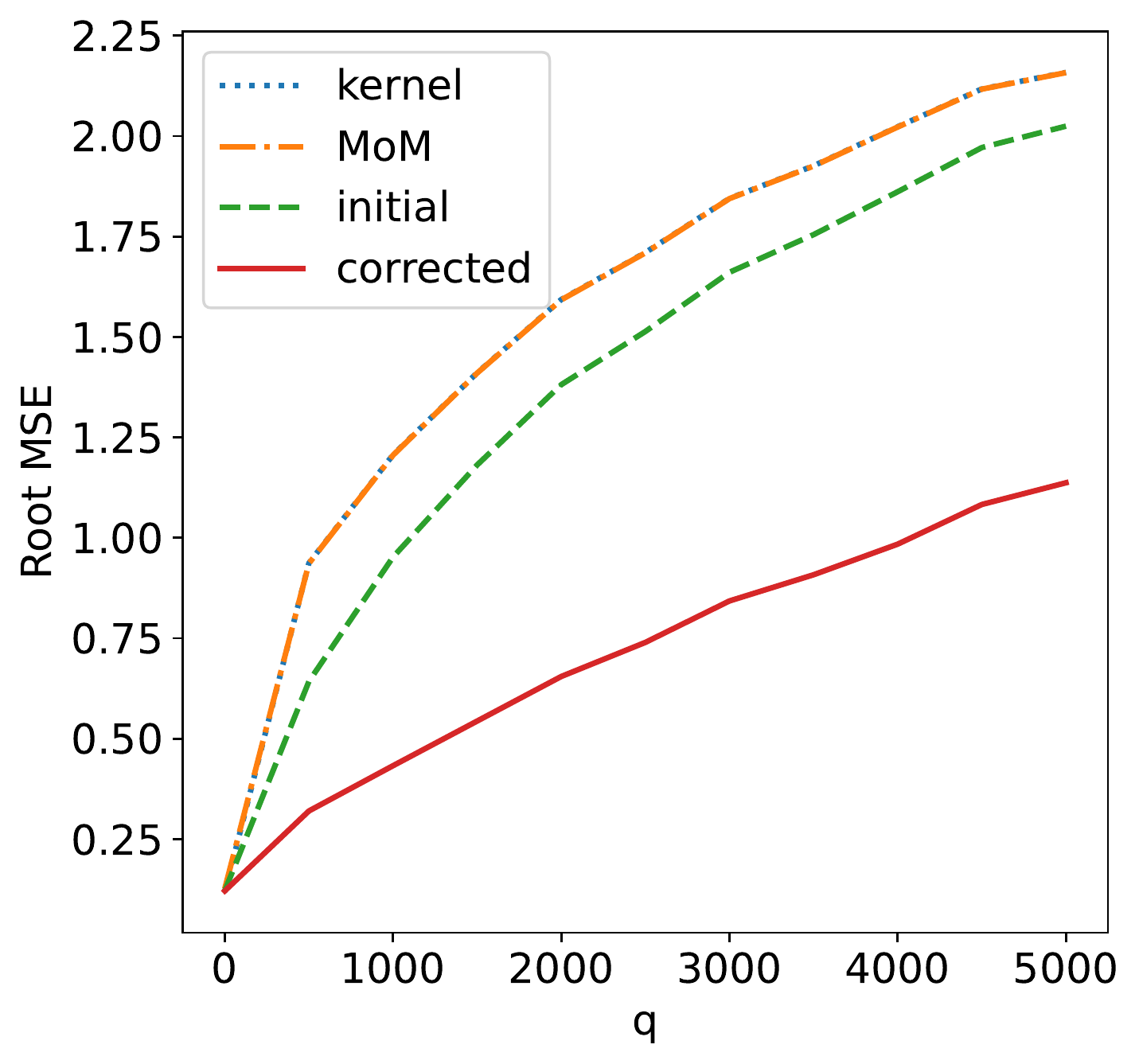}
		\caption{Squared root of $\ell_2$ error, concentrated attack.}
	\end{subfigure}	
	\hfill
	\begin{subfigure}{0.32\linewidth}
		\includegraphics[width=\textwidth,height=0.76\textwidth]{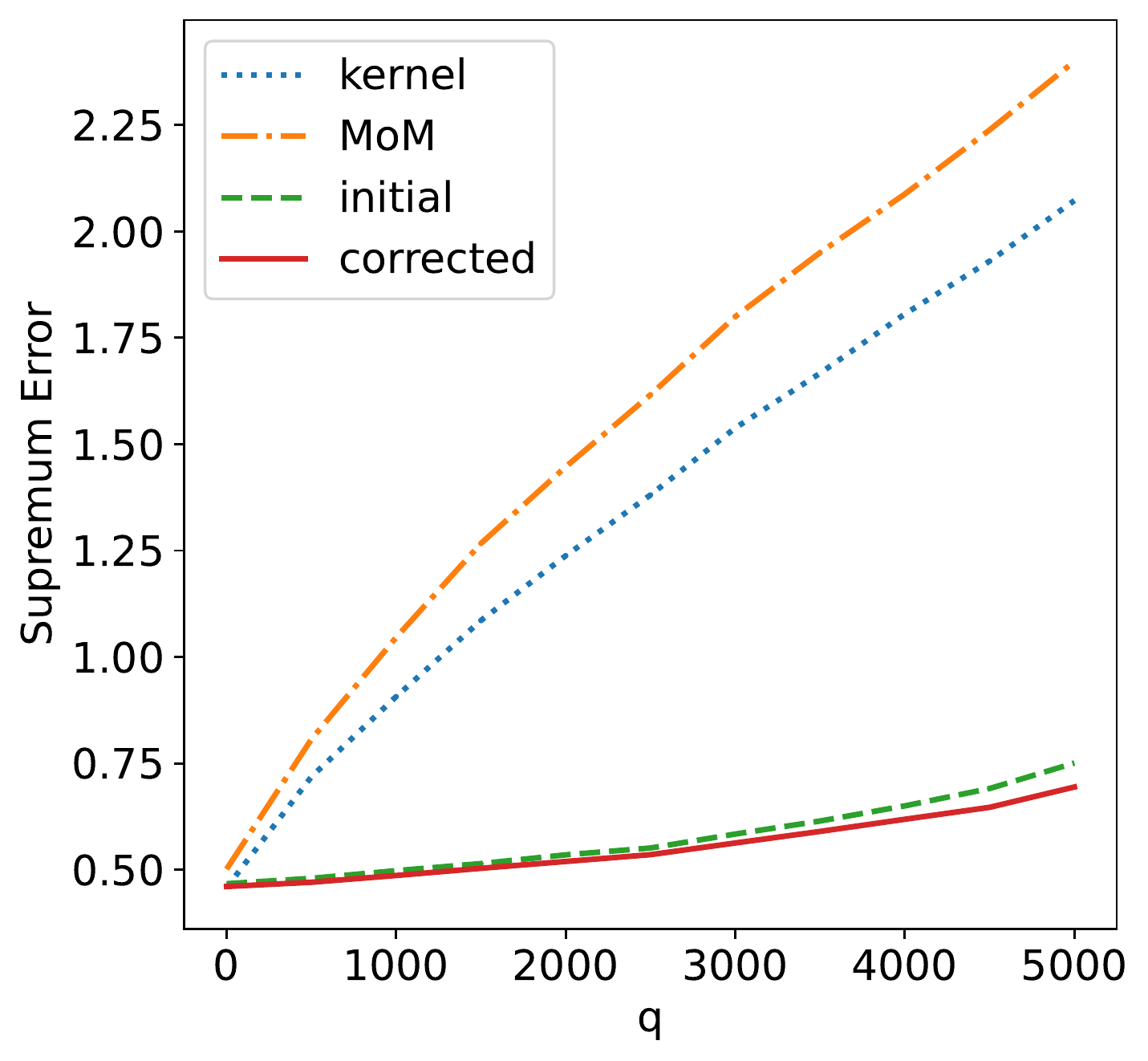}
		\caption{$\ell_\infty$ error, random attack.}
	\end{subfigure}
	\begin{subfigure}{0.32\linewidth}
		\includegraphics[width=\textwidth,height=0.76\textwidth]{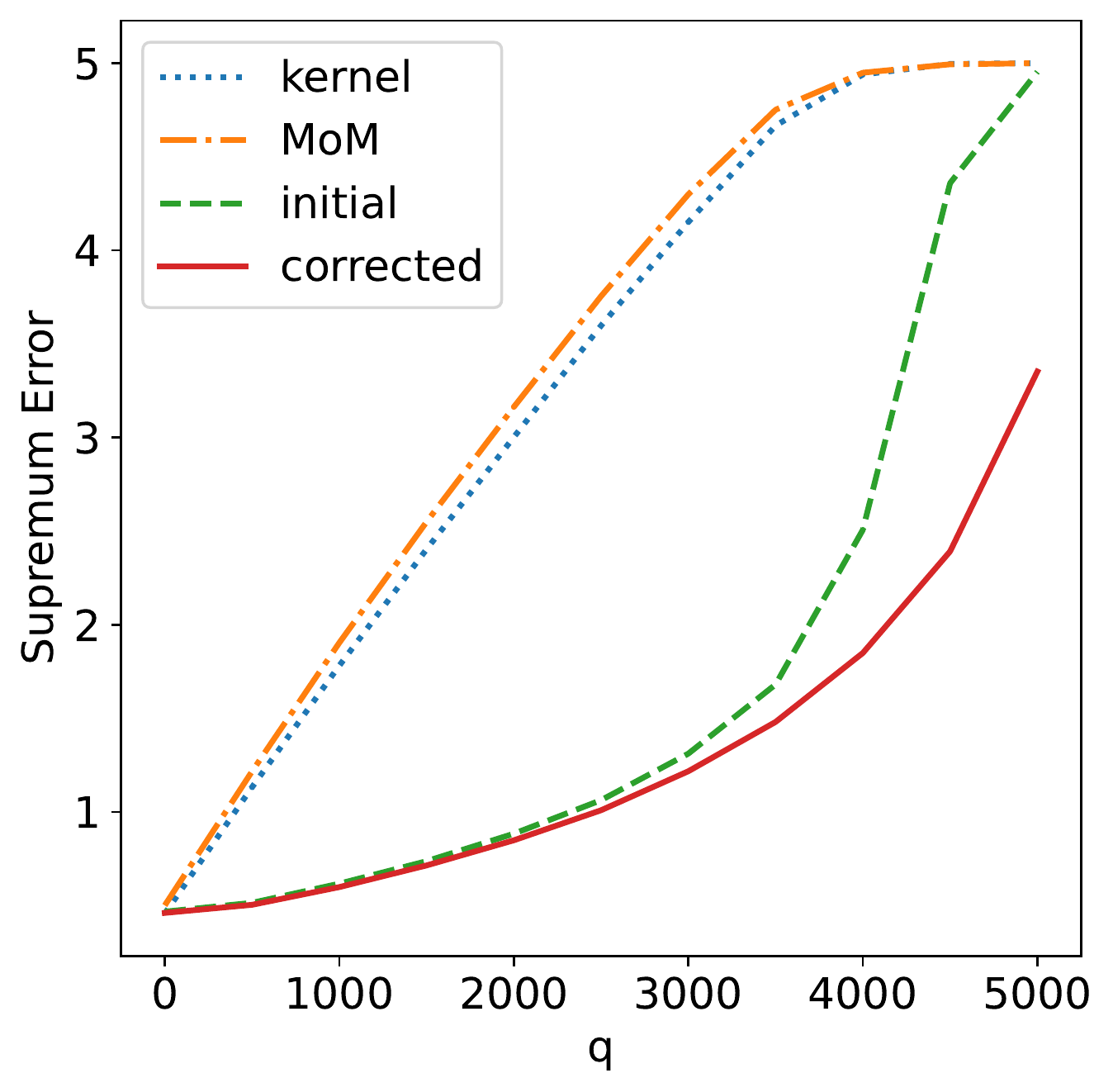}
		\caption{$\ell_\infty$ error, one directional attack.}
	\end{subfigure}
	\begin{subfigure}{0.32\linewidth}
		\includegraphics[width=\textwidth,height=0.76\textwidth]{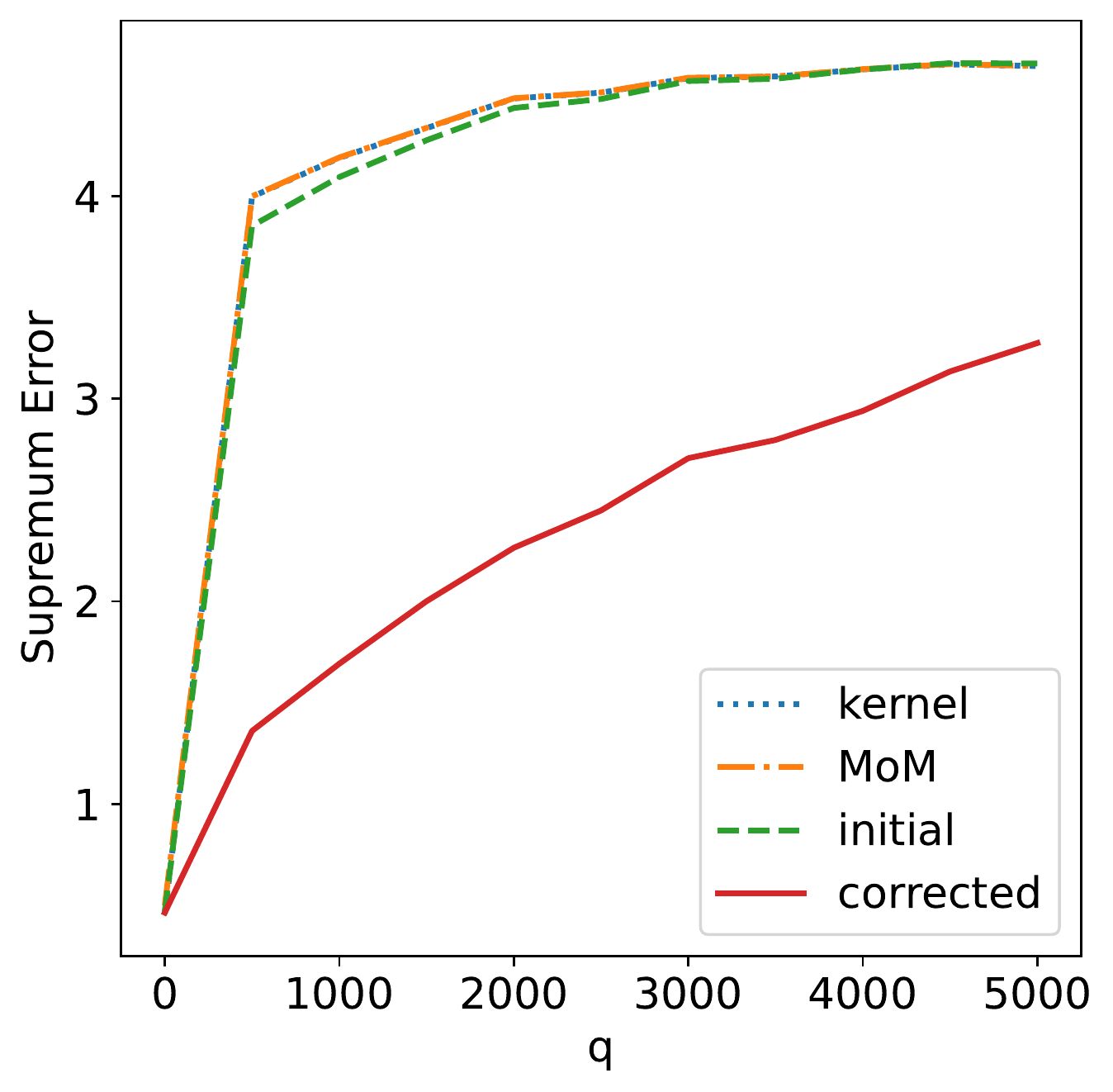}
		\caption{$\ell_\infty$ error, concentrated attack.}
	\end{subfigure}
	\caption{Comparison of $\ell_2$ and $\ell_\infty$ error between various methods for $d=2$.}
	\label{fig:err_2d}
	\vspace{-6mm}
\end{figure}
Figure \ref{fig:err_1d} and \ref{fig:err_2d} show that the simple kernel regression (blue dotted line) fails under poisoning attack. The $\ell_2$ and $\ell_\infty$ error grows fast with the increase of $q$. Besides, traditional median-of-means (orange dash-dot line) does not improve over kernel regression. Trimmed mean estimator works well under random or one directional attack with small $q$, but fails otherwise. Moreover, the initial estimator \eqref{eq:eta} (purple dashed line) shows significantly better performance than kernel estimator under random and one directional attack, as are shown in Fig.\ref{fig:err_1d} and \ref{fig:err_2d}, (a), (b), (d), (e). However, if the attacked samples concentrate around some centers, then the initial estimator fails. Compared with kernel regression, there is some but limited improvement for \eqref{eq:eta}. Finally, the corrected estimator (red solid line) performs well under all attack strategies. Under random attack, the corrected estimator performs nearly the same as initial one. For one directional attack, the corrected estimator performs better than the initial one with large $q$. Under concentrated attack, the correction shows significant improvement. These results are consistent with our theoretical analysis.

We have also conducted numerical experiments using real data. In particular, we obtain and compare the root MSE score of the median-of-means, trimmed mean, our initial estimator and the corrected estimator under all three types of attacks. All experiments show that our methods have desirable performance. The initial estimator significantly improves over median-of-means and trimmed mean estimator. The performance is further improved using our correction technique. The detailed implementation and results are shown in section \ref{sec:numadd} in the appendix.

%\section{Limitations}\label{sec:limitation}

%The major limitation is that for high dimensional feature distributions, the corrected estimator can be computationally expensive, since the number of grids grows exponentially with the dimensionality. 

\section{Conclusion}\label{sec:conc}

In this paper, we have provided a theoretical analysis of robust nonparametric regression problem under adversarial attack. In particular, we have derived the convergence rate of an M-estimator based on Huber loss minimization. We have also derived the information theoretic minimax lower bound, which is the underlying limit of robust nonparametric regression. The result shows that the initial estimator has minimax optimal $\ell_\infty$ risk. With small $q$, which is the number of adversarial samples, $\ell_2$ risk is also optimal. However, for large $q$, the initial estimator becomes suboptimal. Finally, we have proposed a correction technique, which is a nonlinear filter that projects the estimated function into the space of Lipschitz functions. Our theoretical analysis shows that the corrected estimator is minimax optimal even for large $q$. Numerical experiments on both synthesized and real data validate our theoretical analysis.

\bibliographystyle{nips}
\bibliography{robust_regression}
%%%%%%%%%%%%%%%%%%%%%%%%%%%%%%%%%%%%%%%%%%%%%%%%%%%%%%%%%%%%
\newpage
\appendix

In this appendix, section \ref{sec:implement} shows our implementation of the corrected estimator. Other sections are proofs of the theoretical results.  Throughout this appendix, capital letters are random variables, while lowercase letters are their values.

\section{Implementation of Corrected Estimator}\label{sec:implement}
The corrected estimation is
\begin{eqnarray}
	\hat{\eta}_c = \underset{\|\nabla g\|_\infty\leq L}{\arg\min} \|\hat{\eta}_0-g\|_1.
	\label{eq:optim_app}
\end{eqnarray}

In this section, we find a approximate numerical solution instead. In particular, we only optimize $g$ at grid points that are dense enough, and the values elsewhere can be simply calculated via interpolation. The grid points are set to be $\mathbf{x}_{j_1,\ldots, j_d} = \mathbf{x}_0 + (j_1,\ldots, j_d)a$, with indices $j_k\in \{1,\ldots, m_k \}$, $m_k$ is the grid count along $k$-th dimension. $\mathbf{x}_0$ and $m_k$ need to satisfy
\begin{eqnarray}
	x_{0k}\leq \underset{\mathbf{x}\in \mathcal{X}}{\inf}x_k,\\
	x_{0k}+m_ka\geq \underset{\mathbf{x}\in \mathcal{X}}{\sup}x_k,
\end{eqnarray}
so that these grid points cover the whole support. $a$ is the grid size.  Denote $\mathbf{j}=(j_1,\ldots, j_d)$, $g_\mathbf{j} = g(\mathbf{x}_\mathbf{j})$, and $r_\mathbf{j}=\hat{\eta}_0(\mathbf{x}_\mathbf{j})$. Then the discretized optimization problem can be formulated as following:
\begin{mini}
	{g}{\sum_\mathbf{j}|g_\mathbf{j}-r_\mathbf{j}|}{}{}
	\addConstraint{|g_\mathbf{j}-g_{\mathbf{j}'}| \leq La,}{}{\forall |\mathbf{j}'-\mathbf{j}|=1,}
	\label{eq:discrete}
\end{mini}
in which $|\mathbf{j}'-\mathbf{j}|=\sum_{k=1}^d |j_k'-j_k|$. With sufficiently small $a$, the discretized problem approximates \eqref{eq:optim} well. \eqref{eq:discrete} can be solved simply by optimizing each $g_\mathbf{j}$ iteratively.

\section{Proof of Theorem 1: $\ell_2$ Convergence of Initial Estimator}\label{sec:l2}
This section proves the convergence rate of the initial estimator
\begin{eqnarray}
	\hat{\eta}_0(\mathbf{x}) = 
	\underset{|s|\leq M}{\arg\min}\sum_{i=1}^N K\left(\frac{\mathbf{x}-\mathbf{X}_i}{h}\right)\phi(Y_i - s).
	\label{eq:eta_app}
\end{eqnarray}
To begin with, we use the following notations.
\begin{defi}
	Define
	\begin{eqnarray}
		B_h(\mathbf{x})=\{\mathbf{u}| \|\mathbf{u}-\mathbf{x}\|<h \}
	\end{eqnarray}
	as the ball centering at $\mathbf{x}$ with radius $h$,
	\begin{eqnarray}
		q_h(\mathbf{x}) = |\{i|i\in \mathcal{B}, \mathbf{X}_i\in B_h(\mathbf{x}) \}|
	\end{eqnarray}
	as the number of attacked samples within $B_h(\mathbf{x})$,
	\begin{eqnarray}
		I_h(\mathbf{x}) = \{i|\mathbf{X}_i\in B_h(\mathbf{x}) \}
		\label{eq:Ih}
	\end{eqnarray}
	as the set of the indices of all samples within $B_h(\mathbf{x})$, and
	\begin{eqnarray}
		n_h(\mathbf{x}) = |I_h(\mathbf{x})|
		\label{eq:nh}
	\end{eqnarray}
	as the total number of samples within $B_h(\mathbf{x})$.
\end{defi}
\begin{defi}\label{def:ab}
	Define $a(\mathbf{x})$, $b(\mathbf{x})$ as
	\begin{eqnarray}
		a(\mathbf{x}) = \underset{|s|\leq M}{\arg\min} \sum_{i\in I_h(\mathbf{x})}K\left(\frac{\mathbf{x}-\mathbf{X}_i}{h}\right)\phi(\eta(\mathbf{X}_i)+W_i-s),
		\label{eq:adef}
	\end{eqnarray}
	\begin{eqnarray}
		b(\mathbf{x})= \underset{|s|\leq M}{\arg\min} \sum_{i\in I_h(\mathbf{x})}K\left(\frac{\mathbf{x}-\mathbf{X}_i}{h}\right)(\eta(\mathbf{X}_i)+W_i-s)^2,
		\label{eq:bdef}
	\end{eqnarray}
\end{defi}
$a(\mathbf{x})$ is the estimated value with no adversarial attacks. $b(\mathbf{x})$ is just the ordinary kernel regression estimates clipped into $[-M, M]$. Then
\begin{eqnarray}
	|\hat{\eta}_0(\mathbf{x}) - \eta(\mathbf{x})|\leq |\hat{\eta}_0(\mathbf{x}) - a(\mathbf{x})| + |a(\mathbf{x}) - b(\mathbf{x})|+|b(\mathbf{x})-\eta(\mathbf{x})|.
\end{eqnarray}
Note that $a(\mathbf{x})$ and $b(\mathbf{x})$ is not affected by the behavior of the attacker. Hence
\begin{eqnarray}
	R &=& \mathbb{E}\left[\underset{\mathcal{A}}{\sup} (\hat{\eta}_0(\mathbf{X}) - \eta(\mathbf{X}))^2\right]\nonumber\\
	&\leq & 3\mathbb{E}\left[\underset{\mathcal{A}}{\sup} (\hat{\eta}_0(\mathbf{X}) - a(\mathbf{X}))^2\right] + 3\mathbb{E}\left[(a(\mathbf{X})-b(\mathbf{X})^2)\right] + 3\mathbb{E}\left[(b(\mathbf{X}) - \eta(\mathbf{X}))^2\right]\nonumber\\
	&:= & 3(I_1+I_2+I_3),
	\label{eq:R}
\end{eqnarray}
now we bound these three terms separately.

\textbf{Bound of $I_1$.} Define a new random variable
\begin{eqnarray}
	Z = 2Lh + \underset{i\in [N]}{\max}W_i - \underset{i \in [N]}{\min} W_i,
	\label{eq:Z}
\end{eqnarray}
in which $[N] = \{1,\ldots, N\}$. Then $Z$ can be bounded using the following lemma.

\begin{lem}\label{lem:Z}
	If $t>2Lh$, then
	\begin{eqnarray}
		\text{P}(Z>t)\leq 2\exp\left[-\min\left\{\frac{(t-2Lh)^2}{8\sigma^2}, \frac{t-2Lh}{4\sigma}\right\}+\ln N\right],
		\label{eq:Zbound}
	\end{eqnarray}
	and for $t>2Lh + 4\sigma \ln N$, 
	\begin{eqnarray}
		\mathbb{E}[Z^2 \mathbf{1}(Z>t)]\leq 2N(t^2+32\sigma^2 + 8t\sigma)e^{\frac{t-2Lh}{4\sigma}}.
		\label{eq:zsq}
	\end{eqnarray}
\end{lem}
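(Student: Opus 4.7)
The plan is standard: treat $Z - 2Lh = \max_i W_i - \min_i W_i$ as the range of $2N$ sub-exponential variables, derive a Bernstein-type tail bound for part (a), then convert this tail bound into a moment bound for part (b) via the layer-cake identity.

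For part (a), I would first derive the one-sided tail bound for each $W_i$ from Assumption~1(d). Applying Chernoff, $P(W_i > u) \leq \inf_{\lambda \in [0, 1/\sigma]} e^{-\lambda u + \sigma^2 \lambda^2/2}$; optimizing gives the classical Bernstein form $P(W_i > u) \leq \exp(-\min\{u^2/(2\sigma^2), u/(2\sigma)\})$, where the quadratic branch comes from the unconstrained optimum $\lambda = u/\sigma^2$ when $u \leq \sigma$, and the linear branch from the boundary choice $\lambda = 1/\sigma$ when $u > \sigma$. Since the MGF bound in Assumption~1(d) is symmetric in $\lambda$, the same estimate holds for $-W_i$. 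The event $\{Z > t\}$ forces $\max_i W_i - \min_i W_i > t - 2Lh$, which in turn implies at least one of $\max_i W_i > (t-2Lh)/2$ or $-\min_i W_i > (t-2Lh)/2$. A union bound over the $N$ samples and over these two alternatives, with the halved threshold producing the factors $1/8$ and $1/4$ in the exponent, delivers the bound in \eqref{eq:Zbound}.

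For part (b), I would use the layer-cake identity for non-negative $Z$, namely $\mathbb{E}[Z^2 \mathbf{1}(Z > t)] = t^2 P(Z > t) + 2\int_t^\infty u\, P(Z > u)\, du$. The hypothesis $t > 2Lh + 4\sigma \ln N$ ensures that for every $u \geq t$ the quantity $(u - 2Lh)/2$ exceeds $2\sigma \ln N > \sigma$, so the linear branch of the Bernstein bound is the relevant one, and part (a) specializes to $P(Z > u) \leq 2N \exp(-(u-2Lh)/(4\sigma))$ throughout the integration range. Plugging in and using the elementary antiderivative $\int u\, e^{-u/a}\, du = -a(u+a)e^{-u/a}$ with $a = 4\sigma$, the integral evaluates to $4\sigma(t + 4\sigma) e^{-(t-2Lh)/(4\sigma)}$. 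Combining with the boundary term $t^2 P(Z > t) \leq 2N t^2 e^{-(t-2Lh)/(4\sigma)}$ and factoring out the common exponential yields the polynomial prefactor $t^2 + 8\sigma t + 32\sigma^2$, matching \eqref{eq:zsq} (interpreting the displayed exponential as $e^{-(t-2Lh)/(4\sigma)}$, since otherwise the bound would diverge as $t \to \infty$).

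The main obstacle I anticipate is not any one step but rather keeping the constants aligned across the pieces: correctly tracking the $1/8$ versus $1/4$ that come from the halving in $(t-2Lh)/2$, verifying that the threshold $4\sigma \ln N$ is indeed large enough to keep us in the linear branch of the Bernstein tail over the whole half-line $u \geq t$, and combining the boundary term with the integral so that the polynomial prefactor collapses exactly to $t^2 + 8\sigma t + 32\sigma^2$ without any loose constants that would break the way the lemma is invoked later in the $\ell_2$ analysis.
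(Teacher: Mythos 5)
Your proposal is correct and follows essentially the same route as the paper: a Chernoff/union-bound argument on $\max_i W_i$ and $-\min_i W_i$ with the halved threshold producing the $1/8$ and $1/4$ constants, followed by a layer-cake computation restricted to the linear branch of the tail for part (b). You also correctly identify that the exponential in \eqref{eq:zsq} must be read as $e^{-(t-2Lh)/(4\sigma)}$ (the displayed sign is a typo in the paper), and your prefactor $t^2+8\sigma t+32\sigma^2$ matches exactly.
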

Given $Z$, $|\hat{\eta}_0(\mathbf{x}) - a(\mathbf{x})|$ can be bounded. Define
\begin{eqnarray}
	r(\mathbf{x}) = \frac{C_Kq_h(\mathbf{x})}{c_K(n_h(\mathbf{x})-q_h(\mathbf{x}))},
	\label{eq:rx}
\end{eqnarray}
in which $n_h$ is defined in \eqref{eq:nh}, and
\begin{eqnarray}
	n_0=\frac{1}{2}\alpha f_m v_dh^dN,
	\label{eq:n0}
\end{eqnarray} 
in which $v_d$ is the volume of $d$ dimensional unit ball.

Then the following lemmas hold:
\begin{lem}\label{lem:e1}	
	If $r(\mathbf{x})\leq (T-Z)/(T+Z)$, then $|\hat{\eta}_0(\mathbf{x})-a(\mathbf{x})|\leq (T+Z)r(\mathbf{x})$.
\end{lem}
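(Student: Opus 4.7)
The goal is to control how much the $q_h(\mathbf{x})$ attacked samples inside $B_h(\mathbf{x})$ can pull the M-estimator $\hat\eta_0(\mathbf{x})$ away from the oracle $a(\mathbf{x})$. Since the objective $F(s) := \sum_{i\in I_h(\mathbf{x})} K_i\,\phi(Y_i-s)$ (with $K_i := K((\mathbf{x}-\mathbf{X}_i)/h)$, which vanishes for $i\notin I_h(\mathbf{x})$) is convex in $s$, it suffices to show that the (one-sided) derivative $F'$ is nonnegative at $s = a(\mathbf{x})+\delta$ and nonpositive at $s=a(\mathbf{x})-\delta$, where $\delta := (T+Z)r(\mathbf{x})$. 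Then convexity forces $\hat\eta_0(\mathbf{x})\in[a(\mathbf{x})-\delta,a(\mathbf{x})+\delta]$.

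\textbf{Step 1 (clean labels form a narrow band).} For any two clean labels with indices $i,j\in I_h(\mathbf{x})$, Assumption~\ref{ass:problem}(a) and the definition of $Z$ give
\begin{equation*}
|(\eta(\mathbf{X}_i)+W_i)-(\eta(\mathbf{X}_j)+W_j)|\le 2Lh + \max_k W_k - \min_k W_k = Z.
\end{equation*}
Since $a(\mathbf{x})$ is an M-estimator (weighted Huber) of the clean labels $\{\eta(\mathbf{X}_i)+W_i\}_{i\in I_h}$, it lies between their minimum and maximum; consequently $|\eta(\mathbf{X}_i)+W_i-a(\mathbf{x})|\le Z$ for every $i\in I_h(\mathbf{x})$. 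This is the key "localization" fact (valid as long as the box constraint $|s|\le M$ is not binding, which follows from Assumption~\ref{ass:parameter}(c) plus the previous bound).

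\textbf{Step 2 (quadratic regime).} The hypothesis $r(\mathbf{x})\le (T-Z)/(T+Z)$ is equivalent to $\delta\le T-Z$, so $|\eta(\mathbf{X}_i)+W_i-a(\mathbf{x})\pm\delta|\le Z+\delta\le T$. Hence at the two test points $s=a(\mathbf{x})\pm\delta$ every clean residual $\eta(\mathbf{X}_i)+W_i-s$ sits in the quadratic region of $\phi$, where $\phi'(u)=2u$. In particular, the first-order condition for $a(\mathbf{x})$ becomes the plain weighted-mean identity
\begin{equation*}
\sum_{i\in I_h(\mathbf{x})}K_i\bigl(\eta(\mathbf{X}_i)+W_i-a(\mathbf{x})\bigr)=0.
\end{equation*}

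\textbf{Step 3 (gradient at $a(\mathbf{x})+\delta$).} Split $F'(a(\mathbf{x})+\delta)$ into clean and attacked contributions. The clean part equals
\begin{equation*}
\sum_{i\in I_h\setminus\mathcal{B}} 2K_i\bigl(\eta(\mathbf{X}_i)+W_i-a(\mathbf{x})\bigr) - 2\delta\sum_{i\in I_h\setminus\mathcal{B}}K_i,
\end{equation*}
and using the identity from Step~2 the first sum equals $-\sum_{i\in I_h\cap\mathcal{B}} 2K_i(\eta(\mathbf{X}_i)+W_i-a(\mathbf{x}))$, which is bounded in absolute value by $2ZC_K q_h(\mathbf{x})$ by Step~1 and Assumption~\ref{ass:kernel}(c). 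For the attacked part, $|\phi'|\le 2T$ gives a bound of $2TC_K q_h(\mathbf{x})$. Combining, with $-F' = \sum K_i\phi'(Y_i-s)$,
\begin{equation*}
\sum_{i\in I_h}K_i\,\phi'(Y_i-a(\mathbf{x})-\delta) \;\le\; 2(T+Z)C_K q_h(\mathbf{x}) - 2\delta\, c_K\bigl(n_h(\mathbf{x})-q_h(\mathbf{x})\bigr),
\end{equation*}
using the lower bound $K_i\ge c_K$ on the clean-weight mass. The choice $\delta=(T+Z)r(\mathbf{x})=(T+Z)C_K q_h/[c_K(n_h-q_h)]$ makes the right-hand side exactly zero, hence $F'(a(\mathbf{x})+\delta)\ge 0$. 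The symmetric argument at $s=a(\mathbf{x})-\delta$ yields $F'(a(\mathbf{x})-\delta)\le 0$, and convexity of $F$ concludes the proof.

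\textbf{Main obstacle.} The delicate point is Step~1: one must justify that the constraint $|s|\le M$ is inactive at $a(\mathbf{x})$, so that the first-order condition actually holds and the "M-estimator lies in the convex hull of the data" argument applies. This is where Assumption~\ref{ass:parameter}(c) ($M>\sup|\eta|$) is used, together with the fact that all clean labels are within $Lh+\max|W_i|$ of $\eta(\mathbf{x})$; on the event we are working on (which is eventually made high-probability through the bound on $Z$ in Lemma~\ref{lem:Z}) this keeps $a(\mathbf{x})$ strictly inside $(-M,M)$. Everything else is a short convexity-and-kernel-bound computation.
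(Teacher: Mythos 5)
Your argument is, in its core, the same as the paper's: convexity of the weighted Huber objective, checking the sign of the gradient at the two test points $a(\mathbf{x})\pm\delta$ with $\delta=(T+Z)r(\mathbf{x})$, using that clean residuals at $a(\mathbf{x})$ lie in a band of width $Z$, that $\delta\le T-Z$ keeps clean residuals at $a(\mathbf{x})\pm\delta$ in the quadratic regime (so each clean sample contributes a gradient gain of $2\delta K_i\ge 2\delta c_K$), and that each attacked sample can shift the gradient by at most $2(T+Z)C_K$; the cancellation at $\delta=(T+Z)C_Kq_h/[c_K(n_h-q_h)]$ is exactly the paper's computation, merely organized with the quadratic identity applied directly rather than via the paper's two-step bound (gradient at $a(\mathbf{x})$, then monotonicity gain).

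The one point where your proposal falls short is the boundary case you yourself flag. You resolve it by asserting that the constraint $|s|\le M$ is inactive at $a(\mathbf{x})$ "on the event we are working on," but the lemma is a deterministic statement and its hypothesis $r(\mathbf{x})\le(T-Z)/(T+Z)$ only controls the \emph{range} $\max_iW_i-\min_iW_i$; it does not prevent all noise values near $\mathbf{x}$ from being large and of one sign, in which case every clean label in $B_h(\mathbf{x})$ can exceed $M$, the constrained minimizer sits at $a(\mathbf{x})=M$, the first-order condition of Step 2 fails, and your Step 1 bound $|\eta(\mathbf{X}_i)+W_i-a(\mathbf{x})|\le Z$ is no longer available. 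The paper does not rule this case out; it handles it with a one-sided version of the same gradient argument: when $a(\mathbf{x})=M$ the inequality $\hat{\eta}_0(\mathbf{x})\le a(\mathbf{x})$ is free, and only $F'(a(\mathbf{x})-\delta)\le 0$ must be shown, which follows from the clean gradient being favorable (rather than zero) at $M$ together with monotonicity of $\phi'$; symmetrically for $a(\mathbf{x})=-M$. Your Step 3 adapts to give exactly this once you replace the exact first-order identity by the corresponding inequality, so the missing piece is small, but as written the boundary case is not covered.
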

\begin{lem}\label{lem:e2}	
	For any $\mathbf{x}\in \mathcal{X}$, under the following three conditions:
	
	(a) $n_h(\mathbf{x})\geq n_0$;
	
	(b) $Z\leq 2Lh+8\sigma \ln N$;
	
	(c) $q_h(\mathbf{x})\leq c_Kn_0/(3C_K+c_K)$,
	
	then
	\begin{eqnarray}
		|\hat{\eta}_0(\mathbf{x}) - a(\mathbf{x})|\leq \frac{2TC_Kq_h(\mathbf{x})}{c_Kn_0}.
	\end{eqnarray}
	
\end{lem}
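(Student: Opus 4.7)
The plan is to deduce Lemma \ref{lem:e2} as a direct specialization of Lemma \ref{lem:e1}. The job is to check that the hypothesis $r(\mathbf{x})\leq (T-Z)/(T+Z)$ of Lemma \ref{lem:e1} is implied by conditions (a)--(c), and then to massage the resulting bound $(T+Z)r(\mathbf{x})$ into the claimed form $2TC_K q_h(\mathbf{x})/(c_K n_0)$.

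First I would handle the factor $T\pm Z$. Assumption \ref{ass:parameter}(b) gives $T\geq 4Lh+16\sigma\ln N$, while condition (b) gives $Z\leq 2Lh+8\sigma\ln N$. Together these yield $Z\leq T/2$, so $T+Z\leq 3T/2$ and $(T-Z)/(T+Z)\geq 1/3$. Hence it suffices to verify $r(\mathbf{x})\leq 1/3$.

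Next I would lower bound the denominator $n_h(\mathbf{x})-q_h(\mathbf{x})$ appearing in the definition \eqref{eq:rx} of $r(\mathbf{x})$. By condition (a), $n_h(\mathbf{x})\geq n_0$, and by condition (c), $q_h(\mathbf{x})\leq c_K n_0/(3C_K+c_K)$. Subtracting gives
\begin{equation*}
n_h(\mathbf{x})-q_h(\mathbf{x})\geq n_0-\frac{c_K n_0}{3C_K+c_K}=\frac{3C_K\, n_0}{3C_K+c_K}.
\end{equation*}
Plugging into \eqref{eq:rx} and using condition (c) again,
\begin{equation*}
r(\mathbf{x})\leq \frac{C_K q_h(\mathbf{x})(3C_K+c_K)}{3c_K C_K\, n_0}=\frac{q_h(\mathbf{x})(3C_K+c_K)}{3c_K n_0}\leq \frac{1}{3},
\end{equation*}
so the hypothesis of Lemma \ref{lem:e1} is satisfied.

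Applying Lemma \ref{lem:e1} and combining the two chains gives
\begin{equation*}
|\hat{\eta}_0(\mathbf{x})-a(\mathbf{x})|\leq (T+Z)r(\mathbf{x})\leq \frac{3T}{2}\cdot\frac{C_K q_h(\mathbf{x})}{c_K(n_h(\mathbf{x})-q_h(\mathbf{x}))}\leq \frac{3T}{2}\cdot\frac{C_K q_h(\mathbf{x})(3C_K+c_K)}{3c_K C_K n_0}=\frac{T q_h(\mathbf{x})(3C_K+c_K)}{2c_K n_0}.
\end{equation*}
Finally, since Assumption \ref{ass:kernel}(c) implies $c_K\leq C_K$, we have $3C_K+c_K\leq 4C_K$, so $(3C_K+c_K)/(2c_K)\leq 2C_K/c_K$, yielding the claimed bound $2TC_K q_h(\mathbf{x})/(c_K n_0)$. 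No step is genuinely hard; the only subtlety to flag is making sure the constant chase actually closes, i.e. that condition (c) is tight enough to simultaneously (i) keep $r(\mathbf{x})\leq 1/3$ so that Lemma \ref{lem:e1} applies and (ii) produce a $1/n_0$ rather than a $1/(n_h-q_h)$ denominator in the final bound.
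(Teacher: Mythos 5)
Your proof is correct and follows essentially the same route as the paper: verify $Z\leq T/2$ so that $(T-Z)/(T+Z)\geq 1/3$, check $r(\mathbf{x})\leq 1/3$ from conditions (a) and (c), and then invoke Lemma \ref{lem:e1}. The only cosmetic difference is in closing the constants --- the paper uses the sharper chain $(T+Z)r\leq 2T/(1/r+1)\leq 2TC_Kq_h/(c_Kn_h)$, whereas you use $T+Z\leq 3T/2$ together with $3C_K+c_K\leq 4C_K$; both land on the same final bound.
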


Moreover, since $|\hat{\eta}_0(\mathbf{x})|\leq M$, and according to Assumption 1(b), $|\eta(\mathbf{x})|\leq M$, $|\hat{\eta}_0(\mathbf{x})|\leq 2M$ always hold, regardless of whether the conditions (a)-(c) in Lemma \ref{lem:e2} are satisfied. Therefore
\begin{eqnarray}
	I_1 &=& \mathbb{E}\left[\underset{\mathcal{A}}{\sup}(\hat{\eta}_0(\mathbf{X}) - a(\mathbf{X}))^2\right]\nonumber\\
	&=&\mathbb{E}\left[\underset{\mathcal{A}}{\sup}\int (\hat{\eta}_0(\mathbf{x})- a(\mathbf{x}))^2 f(\mathbf{x})d\mathbf{x}\right]\nonumber\\
	&\leq & \frac{4C_K^2 T^2}{c_K^2 n_0^2}f_M\underset{\mathcal{A}}{\sup}\int q_h^2(\mathbf{x})\mathbf{1}\left(q_h(\mathbf{x})\leq \frac{c_kn_0}{3C_K+c_K} \right)d\mathbf{x} + 4M^2\left[\text{P}(n_h(\mathbf{x})<n_0)\right.\nonumber\\
	&&\left.+\text{P}(Z>2Lh+8\sigma \ln N)+\underset{\mathcal{A}}{\sup}\text{P}\left(q_h(\mathbf{X})>\frac{c_K}{3C_K+c_K}n_0\right)\right].
	\label{eq:I1}	
\end{eqnarray}
Now we bound each term separately. For the first term in \eqref{eq:I1},
\begin{eqnarray}
	\underset{\mathcal{A}}{\sup}\int q_h^2(\mathbf{x})d\mathbf{x} &\leq& \underset{\mathcal{A}}{\sup}\int \left(\sum_{i\in \mathcal{B}}\mathbf{1}(\| \mathbf{x}-\mathbf{X}_i\|<h)\right)^2 d\mathbf{x}\nonumber\\
	&\leq & |\mathcal{B}|\underset{\mathcal{A}}{\sup}\int \sum_{i\in \mathcal{B}}\mathbf{1}(\|\mathbf{x}-\mathbf{X}_i\|<h)d\mathbf{x}\nonumber\\
	&=& |\mathcal{B}|^2 v_dh^d=q^2v_dh^d.
	\label{eq:qb1}
\end{eqnarray}
Moreover, 
\begin{eqnarray}
	\underset{\mathcal{A}}{\sup}\int q_h^2(\mathbf{x}) \mathbf{1}\left(q_h(\mathbf{x})\leq \frac{c_Kn_0}{3C_K+c_K}\right) d\mathbf{x}&\leq& \frac{c_Kn_0}{3C_K+c_K}\underset{\mathcal{A}}{\sup}\int q_h(\mathbf{x})d\mathbf{x}\nonumber\\
	&=&\frac{c_Kn_0}{3C_K+c_K}qv_dh^d.
	\label{eq:qb2}
\end{eqnarray}
Therefore, combine \eqref{eq:qb1} and \eqref{eq:qb2}, the first term in \eqref{eq:I1} can be bounded by
\begin{eqnarray}
	\frac{4C_K^2 T^2}{c_K^2n_0^2}f_M \underset{\mathcal{A}}{\sup}\int q_h^2(\mathbf{x})\mathbf{1}\left(q_h(\mathbf{x})\leq \frac{c_Kn_0}{3C_K+c_K} \right)d\mathbf{x}&\lesssim& \frac{T^2}{n_0^2} \min\left\{q^2 h^d, n_0qh^d \right\}\nonumber\\
	&\sim& \frac{T^2q}{N}\min\left\{\frac{q}{Nh^d}, 1 \right\}.
	\label{eq:term1}
\end{eqnarray}

For the second term in \eqref{eq:I1}, we need to bound $\text{P}(n_h(\mathbf{x})<n_0)$ and $\text{P}(Z>2Lh+8\sigma\ln N)$. Note that with sufficiently large $N$, we have $h<D$. Hence
\begin{eqnarray}
	\mathbb{E}[n_h(\mathbf{x})]&\overset{(a)}{=}&\sum_{i=1}^N \text{P}(\mathbf{X}_i\in B_h(\mathbf{x}))\nonumber\\
	&\overset{(b)}{\geq}& Nf_mV(B_h(\mathbf{x})\cap \mathcal{X})\nonumber\\
	&\overset{(c)}{\geq}& Nf_m\alpha v_dh^d =2n_0,	
\end{eqnarray}
in which (a) comes from definitions \eqref{eq:nh} and \eqref{eq:Ih}, (b) comes from Assumption 1(b), (c) comes from Assumption 1(c). From Chernoff inequality\footnote{Here we use this version of Chernoff inequality: For i.i.d binary random variables $U_1,\ldots, U_N$ with $\text{P}(U_i=1) = p$, $\text{P}(U_i=0)=1-p$, $i=1,\ldots, N$, if $k<Np$, then $\text{P}(\sum_{i=1}^N U_i<k)\leq e^{-Np}(eNp/k)^k$.}, denote
\begin{eqnarray}
	p_h(\mathbf{x})=\int_{B_h(\mathbf{x})} f(\mathbf{u})d\mathbf{u},
\end{eqnarray}
then
\begin{eqnarray}
	p_h(\mathbf{x})\geq f_m\alpha v_dh^d=\frac{2n_0}{N}.
\end{eqnarray}
Therefore
\begin{eqnarray}
	\text{P}(n_h(\mathbf{x})<n_0)&\leq& e^{-Np_h(\mathbf{x})}\left(\frac{eNp_h(\mathbf{x})}{n_0}\right)^{n_0}\nonumber\\
	&\leq& e^{-2n_0}(2e)^{n_0}\nonumber\\
	&\leq & e^{-(1-\ln 2)n_0}.
	\label{eq:smalln}
\end{eqnarray}
Assumption 3 requires $h>\ln^2 N/N$. Recall \eqref{eq:n0}, $n_0\sim Nh^d\gtrsim \ln^2 N$, thus \eqref{eq:smalln} decays faster than any polynomial of $n_0$. For the third term in \eqref{eq:I1}, from Lemma \ref{lem:Z}, 
\begin{eqnarray}
	\text{P}(Z>2Lh+8\sigma \ln N)\leq \frac{2}{N};
	\label{eq:term3}
\end{eqnarray}
Finally, for the last term in \eqref{eq:I1},
\begin{eqnarray}
	\text{P}\left(q_h(\mathbf{X})>\frac{c_K}{3C_K+c_K}n_0\right)&\leq& \frac{\mathbb{E}[q_h^2(\mathbf{X})]}{\left(\frac{c_K}{3C_K+c_K}n_0\right)^2}\nonumber\\
	&\leq& \frac{(3C_K+c_K)^2 f_M}{c_K^2n_0^2}\int q_h^2(\mathbf{x})d\mathbf{x}\nonumber\\
	&\lesssim& \frac{q^2h^d}{n_0^2}\sim \frac{q^2}{N^2 h^d}.
	\label{eq:largeq1}
\end{eqnarray}
We can get the following alternative bound:
\begin{eqnarray}
	\text{P}\left(q_h(\mathbf{X})>\frac{c_K}{3C_K+c_K}n_0\right)\leq \frac{\mathbb{E}[q_h(\mathbf{X})]}{\frac{c_K}{3C_K+c_K}n_0}\leq \frac{3C_K+c_K}{c_K}\frac{qh^d}{n_0}\lesssim \frac{q}{N}.
	\label{eq:largeq2}
\end{eqnarray}
Therefore, combine \eqref{eq:largeq1} and \eqref{eq:largeq2},
\begin{eqnarray}
	\text{P}\left(q_h(\mathbf{X})>\frac{c_K}{3C_K+c_K}n_0\right)\lesssim \frac{q}{N}\min\left\{\frac{q}{Nh^d}, 1 \right\}.
	\label{eq:term4}
\end{eqnarray}
Now it remains to bound \eqref{eq:I1} using \eqref{eq:term1}, \eqref{eq:smalln}, \eqref{eq:term3} and \eqref{eq:term4}. This yields
\begin{eqnarray}
	I_1&\lesssim& \frac{T^2q}{N}\min\left\{\frac{q}{Nh^d}, 1 \right\}+e^{-(1-\ln 2)n_0}+\frac{2}{N}+\frac{q}{N}\min\left\{\frac{q}{Nh^d}, 1 \right\}\nonumber\\
	&\lesssim& \frac{T^2q}{N}\min\left\{\frac{q}{Nh^d}, 1 \right\}.
	\label{eq:result1}
\end{eqnarray}

\textbf{Bound of $I_2$.} 
\begin{lem}\label{lem:nodiff}
	If $Z\leq T$,  then $a(\mathbf{x})-b(\mathbf{x}) = 0$.
\end{lem}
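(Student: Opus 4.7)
The plan is to show that under $Z\le T$, the squared-loss and Huber-loss minimizers coincide even before clipping to $[-M,M]$, because the squared-loss optimum automatically keeps every residual inside the quadratic region $[-T,T]$ of $\phi$.

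First I would set up convenient notation: write $Y_i=\eta(\mathbf{X}_i)+W_i$ and $K_i=K((\mathbf{x}-\mathbf{X}_i)/h)$ for $i\in I_h(\mathbf{x})$, and let $F(s)=\sum_{i\in I_h(\mathbf{x})}K_i\phi(Y_i-s)$ and $G(s)=\sum_{i\in I_h(\mathbf{x})}K_i(Y_i-s)^2$. For any $i,j\in I_h(\mathbf{x})$, Lipschitz continuity of $\eta$ (Assumption 1(a)) together with $\|\mathbf{X}_i-\mathbf{x}\|,\|\mathbf{X}_j-\mathbf{x}\|<h$ gives $|\eta(\mathbf{X}_i)-\eta(\mathbf{X}_j)|\le 2Lh$, so $|Y_i-Y_j|\le 2Lh+\max_k W_k-\min_k W_k=Z\le T$. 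Hence the set $\{Y_i\}_{i\in I_h(\mathbf{x})}$ is contained in an interval of length at most $T$.

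The key step is to compare the unconstrained minimizers of $F$ and $G$. Let $s^\star=\sum_i K_iY_i/\sum_iK_i$ be the unconstrained minimizer of the convex quadratic $G$; since $s^\star$ is a convex combination of the $Y_i$, it lies in $[\min_iY_i,\max_iY_i]$, so $|Y_i-s^\star|\le Z\le T$ for every $i\in I_h(\mathbf{x})$. On this event $\phi'(Y_i-s^\star)=2(Y_i-s^\star)$ from the definition \eqref{eq:derivative}, so $F'(s^\star)=-\sum_iK_i\phi'(Y_i-s^\star)=G'(s^\star)=0$. Convexity of $F$ then forces $s^\star$ to be an unconstrained minimizer of $F$ as well.

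Finally I would conclude by invoking the constraint $|s|\le M$. Because $F$ and $G$ are each convex with a common unconstrained minimizer $s^\star$, each is non-increasing on $(-\infty,s^\star]$ and non-decreasing on $[s^\star,\infty)$. Therefore the constrained minimizers $a(\mathbf{x})$ and $b(\mathbf{x})$ are both equal to the projection $\Clip(s^\star,[-M,M])$, yielding $a(\mathbf{x})=b(\mathbf{x})$. The only subtlety to double check is uniqueness of the minimizers (needed so the projection characterization is unambiguous); this is immediate for $G$ (strict convexity whenever $\sum_iK_i>0$) and follows for $F$ by the same argument provided the kernel mass is positive, which is guaranteed by $n_h(\mathbf{x})\ge 1$ on the relevant event — and otherwise the lemma is vacuous because both $a(\mathbf{x})$ and $b(\mathbf{x})$ reduce to arbitrary tie-breaking on $[-M,M]$. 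I expect no serious obstacle here; the whole argument is a two-line convexity/first-order-condition observation, and the only care needed is keeping track of the clipping step.
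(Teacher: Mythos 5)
Your proof is correct and follows essentially the same route as the paper: observe that the clean responses $\eta(\mathbf{X}_i)+W_i$ over $I_h(\mathbf{x})$ span an interval of length at most $Z\leq T$, so the Huber loss coincides with the squared loss on the relevant range and the two minimizers agree. Your treatment of the clipping to $[-M,M]$ via the common unconstrained minimizer and convexity is in fact slightly more careful than the paper's one-line conclusion, but it is a refinement of the same argument rather than a different one.
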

Lemma \ref{lem:nodiff} will also be used later in other theorems.
\begin{proof}
	\begin{eqnarray}
		\underset{i\in I_h(\mathbf{x})}{\max}(\eta(\mathbf{X}_i) + W_i) - 	\underset{i\in I_h(\mathbf{x})}{\min}(\eta(\mathbf{X}_i) + W_i) \leq T.
	\end{eqnarray}
	Therefore $\phi(\eta(\mathbf{X})_i+W_i-s) = (\eta(\mathbf{X}_i)+W_i-s)^2$ for $\underset{i\in I_h(\mathbf{x})}{\min}(\eta(\mathbf{X}_i) + W_i) \leq s\leq \underset{i\in I_h(\mathbf{x})}{\max}(\eta(\mathbf{X}_i) + W_i)$. From \eqref{eq:adef} and \ref{eq:bdef}, $a(\mathbf{x})-b(\mathbf{x}) = 0$.
\end{proof}

If $Z>T$, \eqref{eq:adef} and \eqref{eq:bdef} gives $|a(\mathbf{x})-b(\mathbf{x})|\leq 2M$. Therefore, from Lemma \ref{lem:Z},
\begin{eqnarray}
	I_2 = \mathbb{E}[(a(\mathbf{X})-b(\mathbf{X}))^2] \leq 4M^2\text{P}(Z>T)\leq \frac{8M^2}{N^3}.
	\label{eq:result2}
\end{eqnarray}

\textbf{Bound of $I_3$.}
Since $W_i$ is sub-exponential, it is straightforward to show that the variance is bounded by $\sigma^2$:
\begin{eqnarray}
	\mathbb{E}[W_i^2]=\mathbb{E}\left[\underset{\lambda \rightarrow 0}{\lim} \frac{2}{\lambda^2}(e^{\lambda W_i} - 1 - \lambda W_i)\right]\leq \underset{\lambda \rightarrow 0}{\liminf}\frac{2}{\lambda^2}(e^{\frac{1}{2}\sigma^2\lambda^2} - 1)=\sigma^2,
\end{eqnarray}
in which we used Fatou's lemma in the second step. Then $I_3$ can simply be bounded by standard analysis of kernel regression \cite{krzyzak1986rates,mack1982weak,devroye1978uniform}. For the completeness of the paper, we provide a brief proof here. 

If $n_h(\mathbf{x})\geq n_0$, in which $n_0$ is defined in \eqref{eq:n0}, then with the Lipschitz assumption (Assumption 1(a)),
\begin{eqnarray}
	|b(\mathbf{x})-\eta(\mathbf{x})| &\leq& \left| \frac{\sum_{i\in I_h(\mathbf{x})}K\left(\frac{\mathbf{x}-\mathbf{X}_i}{h}\right) (\eta(\mathbf{X}_i)-\eta(\mathbf{x}))}{\sum_{i\in I_h(\mathbf{x})}K\left(\frac{\mathbf{x}-\mathbf{X}_i}{h}\right)}\right|+\left|\frac{\sum_{i\in I_h(\mathbf{x})}K\left(\frac{\mathbf{x}-\mathbf{X}_i}{h}\right) W_i}{\sum_{i\in I_h(\mathbf{x})}K\left(\frac{\mathbf{x}-\mathbf{X}_i}{h}\right)}\right|\nonumber\\
	&\leq &Lh +\frac{1}{n_0c_K}\left|\sum_{i\in I_h(\mathbf{x})}K\left(\frac{\mathbf{x}-\mathbf{X}_i}{h}\right)W_i\right|.
	\label{eq:ekernel}
\end{eqnarray}

If $n_h(\mathbf{x})<n_0$, then $|b(\mathbf{x})-\eta(\mathbf{x})|\leq 2M$. Since $\mathbb{E}[W_i] = 0$, $\mathbb{E}[W_i^2] \leq \sigma^2$, 
\begin{eqnarray}
	\mathbb{E}[(b(\mathbf{x})-\eta(\mathbf{x}))^2] \leq L^2 h^2+\frac{\sigma^2 C_k^2}{n_0c_k^2} + 4M^2 \text{P}(n_h(\mathbf{x})<n_0).
	\label{eq:I3}
\end{eqnarray}
Using \eqref{eq:smalln}, integrate \eqref{eq:I3} over the whole support, 
\begin{eqnarray}
	I_3\lesssim h^2 + \frac{1}{n_0}\sim h^2+\frac{1}{Nh^d}.
	\label{eq:result3}
\end{eqnarray}
Combine \eqref{eq:R}, \eqref{eq:result1}, \eqref{eq:result2} and \eqref{eq:result3},
\begin{eqnarray}
	R\lesssim \frac{T^2q}{N}\min\left\{\frac{q}{Nh^d}, 1 \right\} + h^2 + \frac{1}{Nh^d}.
\end{eqnarray}

\section{Beyond the Strong Density Assumption}\label{sec:beyond}
In this section, we discuss the bound without assuming that the pdf $f$ is bounded from below. However, it is still required that the support is bounded. The precise assumption is stated as following:
\begin{ass}\label{ass:new}
	Here we make the following assumptions.
	
	(a) $\mathcal{X}$ has bounded volume, i.e. $Vol(\mathcal{X}) = V<\infty$;
	
	(b) For any $\mathbf{x}\in \mathcal{X}$, $p_h(\mathbf{x})\geq \alpha f(\mathbf{x})v_dh^d$, in which $p_h(\mathbf{x}) = \int_{B_h(\mathbf{x}} f(\mathbf{u})d\mathbf{u}$.
\end{ass}

Now we analyze the $\ell_2$ error under Assumption \ref{ass:new}. Define 
\begin{eqnarray}
	n_0(\mathbf{x})=\frac{1}{2}\alpha f(\mathbf{x})v_dh^dN.
\end{eqnarray}
Then under the new assumption, Lemma \ref{lem:e2} is replaced by the following one:
\begin{lem}
	For any $\mathbf{x}\in \mathcal{X}$, under the following three conditions:
	
	(a) $n_h(\mathbf{x})\geq n_0(\mathbf{x})$;
	
	(b) $Z\leq 2Lh + 8\sigma \ln N$;
	
	(c) $q_h(\mathbf{x})\leq c_Kn_0(\mathbf{x}) / (3C_K+c_K)$.
	
	Then
	\begin{eqnarray}
		|\hat{\eta}_0(\mathbf{x}) - a(\mathbf{x})|\leq \frac{2TC_Kq_h(\mathbf{x})}{c_Kn_0}.
	\end{eqnarray}
\end{lem}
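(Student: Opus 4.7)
The plan is to follow the argument of Lemma \ref{lem:e2} essentially verbatim, tracking every occurrence of $n_0$ and replacing it by the local quantity $n_0(\mathbf{x})$. The only place where the previous proof used the strong density assumption was through the deterministic lower bound $\mathbb{E}[n_h(\mathbf{x})] \geq 2n_0$; here that role is already played directly by hypothesis (a), so there is nothing further to adapt from the probabilistic side. The work reduces to invoking Lemma \ref{lem:e1} and chasing the constants.

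First I would verify the hypothesis of Lemma \ref{lem:e1}, namely $r(\mathbf{x}) \leq (T-Z)/(T+Z)$. By Assumption \ref{ass:parameter}(b), $T \geq 4Lh + 16\sigma \ln N = 2(2Lh + 8\sigma \ln N)$, so condition (b) gives $Z \leq T/2$, hence $(T-Z)/(T+Z) \geq 1/3$. On the other hand, combining conditions (a) and (c) yields
\[
n_h(\mathbf{x}) - q_h(\mathbf{x}) \;\geq\; n_0(\mathbf{x}) - \frac{c_K n_0(\mathbf{x})}{3C_K+c_K} \;=\; \frac{3C_K n_0(\mathbf{x})}{3C_K+c_K}.
\]
Substituting into the definition \eqref{eq:rx} of $r(\mathbf{x})$ and using (c) once more gives
\[
r(\mathbf{x}) \;\leq\; \frac{C_K q_h(\mathbf{x})(3C_K+c_K)}{3 c_K C_K\, n_0(\mathbf{x})} \;=\; \frac{q_h(\mathbf{x})(3C_K+c_K)}{3 c_K n_0(\mathbf{x})} \;\leq\; \frac{1}{3}.
\]
So $r(\mathbf{x}) \leq 1/3 \leq (T-Z)/(T+Z)$, and Lemma \ref{lem:e1} applies.

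Applying that lemma and then using $T+Z \leq 2T$ together with the lower bound on $n_h(\mathbf{x}) - q_h(\mathbf{x})$ above yields
\[
|\hat{\eta}_0(\mathbf{x}) - a(\mathbf{x})| \;\leq\; (T+Z)\, r(\mathbf{x}) \;\leq\; \frac{2T C_K q_h(\mathbf{x})}{c_K(n_h(\mathbf{x}) - q_h(\mathbf{x}))} \;\lesssim\; \frac{T C_K q_h(\mathbf{x})}{c_K n_0(\mathbf{x})},
\]
which, after absorbing the numerical factor $(3C_K+c_K)/(3C_K)$ into the implicit constant, is the claimed bound (with $n_0$ understood as $n_0(\mathbf{x})$).

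There is no substantive obstacle in proving this lemma itself; the argument is a mechanical transplant of Lemma \ref{lem:e2} with the substitution $n_0 \mapsto n_0(\mathbf{x})$. The real difficulty is downstream, when one integrates this pointwise bound over $\mathcal{X}$ to recover the global $\ell_2$ rate \eqref{eq:upper2}: because $n_0(\mathbf{x}) \propto f(\mathbf{x}) h^d$, the adversary can concentrate attacks in low-density regions, which is what causes the bound to degrade from $(T^2 q / N)\min\{q/(Nh^d),1\}$ under the strong density assumption to the weaker $T^2 q / N$ here.
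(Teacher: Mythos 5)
Your proposal is correct and matches the paper's own proof, which literally consists of the remark that one follows the proof of Lemma~\ref{lem:e2} with $n_0$ replaced by $n_0(\mathbf{x})$: both arguments check $r(\mathbf{x})\leq 1/3\leq (T-Z)/(T+Z)$ from conditions (a)--(c) together with Assumption~\ref{ass:parameter}(b), and then invoke Lemma~\ref{lem:e1}. The only (harmless) difference is in the final constant-chasing: the paper uses $(T+Z)r(\mathbf{x})\leq 2Tr(\mathbf{x})/(1+r(\mathbf{x}))\leq 2TC_Kq_h(\mathbf{x})/(c_Kn_h(\mathbf{x}))$, which recovers the stated constant $2$ exactly, whereas your cruder $T+Z\leq 2T$ combined with $n_h(\mathbf{x})-q_h(\mathbf{x})\geq \frac{3C_K}{3C_K+c_K}n_0(\mathbf{x})$ loses a factor $(3C_K+c_K)/(3C_K)\leq 4/3$, which you correctly note is immaterial for the downstream rate.
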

\begin{proof}
	The proof just follows the proof of Lemma \ref{lem:e2} in section \ref{sec:e2}. The detailed steps are omitted here.
\end{proof}
Follow \eqref{eq:I1}, we have
\begin{eqnarray}
	I_1 &=&\mathbb{E}\left[\underset{\mathcal{A}}{\sup}\left(\hat{\eta}_0(\mathbf{X}) - a(\mathbf{X})\right)^2\right]\nonumber\\
	&=&\mathbb{E}\left[\underset{\mathcal{A}}{\sup}\int (\hat{\eta}_0(\mathbf{x}) - a(\mathbf{x}))^2 f(\mathbf{x})d \mathbf{x}\right]\nonumber\\
	&\leq & \frac{4T^2C_K^2}{c_K^2}\underset{\mathcal{A}}{\sup}\int \frac{q_h^2(\mathbf{x})}{n_0^2(\mathbf{x})}\mathbf{1}\left(q_h(\mathbf{x})\leq \frac{c_Kn_0(\mathbf{x})}{3C_K+c_K}\right) f(\mathbf{x})d\mathbf{x}+4M^2 \left[\text{P}(n_h(\mathbf{X})<n_0(\mathbf{X}))\right.\nonumber\\
	&&\left.+\text{P}(Z>2Lh+8\sigma\ln N)+\underset{\mathcal{A}}{\sup}\text{P}\left(q_h(\mathbf{X})>\frac{c_K}{3C_K+c_K}n_0(\mathbf{X})\right)\right].
	\label{eq:I1new}
\end{eqnarray}
Now we bound each terms in \eqref{eq:I1new}.

For the first term in \eqref{eq:I1new},
\begin{eqnarray}
	\underset{\mathcal{A}}{\sup}\int \frac{q_h^2(\mathbf{x})}{n_0^2(\mathbf{x})}\mathbf{1}\left(q_h(\mathbf{x})\leq \frac{c_Kn_0(\mathbf{x})}{3C_K+c_K}\right) f(\mathbf{x}) d\mathbf{x}&\leq& \frac{c_K}{3C_K+c_K}\underset{\mathcal{A}}{\sup}\int \frac{q_h(\mathbf{x})}{n_0(\mathbf{x})} f(\mathbf{x})d\mathbf{x}\nonumber\\
	&\leq & \frac{2c_K}{(3C_K+c_K)\alpha v_d}\frac{1}{Nh^d}\underset{\mathcal{A}}{\sup}\int q_h(\mathbf{x})d\mathbf{x}\nonumber\\
	&\lesssim & \frac{q}{N}
	\label{eq:term1new}
\end{eqnarray}
For the second term, similar to \eqref{eq:smalln}, 
\begin{eqnarray}
	\text{P}(n_h(\mathbf{x}) < n_0(\mathbf{x}))\leq e^{-(1-\ln 2)n_0(\mathbf{x})}=\exp\left[-\frac{1}{2}(1-\ln 2)\alpha f(\mathbf{x})v_dh^dN\right].
\end{eqnarray}
Therefore
\begin{eqnarray}
	\text{P}(n_h(\mathbf{X}) < n_0(\mathbf{X}))&\leq& \int f(\mathbf{x})\exp\left[-\frac{1}{2}(1-\ln 2)\alpha v_d h^d Nf(\mathbf{x})\right]d\mathbf{x}\nonumber\\
	&\leq &\int_\mathcal{X} \frac{2}{(1-\ln 2) e\alpha v_dNh^d}d\mathbf{x}\nonumber\\
	&=&\frac{2V}{(1-\ln 2)e\alpha v_dh^d N}\sim \frac{1}{Nh^d}.
	\label{eq:smallnnew}
\end{eqnarray}
The bound of the third term is the same as \eqref{eq:term3}. For the fourth term, from \eqref{eq:term1new},
\begin{eqnarray}
	\int \frac{q_h(\mathbf{x})}{n_0(\mathbf{x})}d\mathbf{x}\lesssim \frac{q}{N}.
\end{eqnarray}
Using Markov inequality, 
\begin{eqnarray}
	\text{P}\left(\frac{q_h(\mathbf{X})}{n_h(\mathbf{X})}>\frac{c_K}{3C_K+c_K}\right)\lesssim \frac{q}{N}.
\end{eqnarray}
Hence
\begin{eqnarray}
	I_1\lesssim \frac{T^2q}{N}+\frac{1}{Nh^d}.
\end{eqnarray}
The bound of $I_2$ is the same as \eqref{eq:result2}. Now it remains to bound $I_3$. \eqref{eq:I3} becomes
\begin{eqnarray}
	\mathbb{E}[(b(\mathbf{x}) - \eta(\mathbf{x}))^2]\leq L^2 h^2 +\frac{\sigma^2 C_K^2}{n_0(\mathbf{x})c_K^2}+4M^2 \text{P}(n_h(\mathbf{X})<n_0(\mathbf{X})).
\end{eqnarray}
Therefore
\begin{eqnarray}
	I_3\lesssim h^2+\int_\mathcal{X} \frac{1}{n_0(\mathbf{x})} f(\mathbf{x}) d\mathbf{x} + \text{P}(n_h(\mathbf{X}) < n_0(\mathbf{X})).
\end{eqnarray}
Note that
\begin{eqnarray}
	\int_\mathcal{X} \frac{1}{n_0(\mathbf{x})} f(\mathbf{x})d\mathbf{x} = \frac{2}{\alpha v_d}\frac{1}{Nh^d}\int_\mathcal{X} 1d\mathbf{x} = \frac{2V}{\alpha v_d Nh^d}\sim \frac{1}{Nh^d}.
	\label{eq:I32}
\end{eqnarray}
From \eqref{eq:I32} and \eqref{eq:smallnnew},
\begin{eqnarray}
	I_3\lesssim h^2 +\frac{1}{Nh^d}.
\end{eqnarray}
Combine $I_1$, $I_2$ and $I_3$,
\begin{eqnarray}
	R\lesssim \frac{T^2 q}{N}+h^2 +\frac{1}{Nh^d}.
\end{eqnarray}

\section{Proof of Theorem 2: $\ell_\infty$ Convergence of Initial Estimator}\label{sec:linfty}
In the following proof, we assume that
\begin{eqnarray}
	h>\left(\frac{2(3C_K+c_K)q}{c_K\alpha f_mv_dN}\right)^\frac{1}{d}.
	\label{eq:assumeh}
\end{eqnarray}
If \eqref{eq:assumeh} does not hold, then $q/(Nh^d)\gtrsim 1$. Since $|\hat{\eta}_0(\mathbf{x})|\leq M$ always hold, we have
\begin{eqnarray}
	|\hat{\eta}_0(\mathbf{x}) - \eta(\mathbf{x})|\leq 2M \lesssim \frac{q}{Nh^d}\lesssim \frac{Tq}{Nh^d} + h + \frac{\ln N}{\sqrt{Nh^d}},
\end{eqnarray}
thus Theorem 2 is proved trivially. From now on, assume \eqref{eq:assumeh} holds.

To begin with, define event $E$, which is true if all of the three conditions hold:

(1) $\max_i W_i\leq 4\sigma \ln N$, $\min_i W_i\geq -4\sigma \ln N$;

(2) $n_0\leq n_h(\mathbf{x})\leq n_M, \forall \mathbf{x}\in \mathcal{X}$, in which $n_0$ is defined in \eqref{eq:n0}, $n_h(\mathbf{x})$ is defined in \eqref{eq:nh}, and
\begin{eqnarray}
	n_M = \frac{3}{2}Nf_Mv_dh^d;
\end{eqnarray}

(3) For all $\mathbf{x}\in \mathcal{X}$ and any $k\in \{1,\ldots, N\}$,
\begin{eqnarray}
	\left|\sum_{i\in \mathcal{N}_k(\mathbf{x})}W_i\right|\leq \sigma \max\{\sqrt{k}\ln N, \ln^2 N \},
\end{eqnarray}
in which $\mathcal{N}_k(\mathbf{x})$ is the set of all samples among $k$ nearest neighbors of $\mathbf{x}$. We remark that according to \eqref{eq:Z}, (1) implies that $Z\leq 2Lh+8\sigma \ln N$.

Denote the complement of $E$ as $E^c$. Now we bound $P(E^c)$. From \eqref{eq:wmax}, $\text{P}(\max_iW_i>4\sigma\ln N)\leq 1/N$. Similar bound holds for $\min_i W_i$. This bounds the probability of violating (1). (2) and (3) can be bounded using the following two lemmas:

\begin{lem}\label{lem:n}
	For sufficiently large $N$, with probability at least $1-1/N$, 
	\begin{eqnarray}
		\underset{\mathbf{x}\in \mathcal{X}}{\inf}n_h(\mathbf{x})&\geq& n_0,\\
		\underset{\mathbf{x}\in \mathcal{X}}{\sup}n_h(\mathbf{x}) &\leq & n_M.
	\end{eqnarray}
\end{lem}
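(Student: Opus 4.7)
The plan is to lift the pointwise Chernoff bound for $n_h(\mathbf{x})$ to a uniform-in-$\mathbf{x}$ bound by a standard covering argument. For any fixed $\mathbf{x}\in\mathcal{X}$, $n_h(\mathbf{x})=\sum_{i=1}^N \mathbf{1}(\mathbf{X}_i\in B_h(\mathbf{x}))$ is a sum of $N$ i.i.d.\ Bernoulli$(p_h(\mathbf{x}))$ variables. By Assumption 1(b) and (c), $f_m\alpha v_d h^d \le p_h(\mathbf{x}) \le f_M v_d h^d$, so $\mathbb{E}[n_h(\mathbf{x})] \in [2n_0,\tfrac{2}{3}n_M]$. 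A multiplicative Chernoff bound then yields $\text{P}(n_h(\mathbf{x}) < n_0) \lesssim e^{-cNh^d}$ and $\text{P}(n_h(\mathbf{x}) > n_M) \lesssim e^{-cNh^d}$ for some constant $c$ depending only on the problem parameters (the lower-tail bound appeared already as \eqref{eq:smalln}).

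To upgrade to a uniform bound, I would take a $\delta$-net $\{\mathbf{x}_1,\ldots,\mathbf{x}_J\}$ of $\mathcal{X}$, which exists with $J\le C\delta^{-d}$ by compactness. For any $\mathbf{x}$ within distance $\delta$ of some $\mathbf{x}_j$, the triangle inequality gives $B_{h-\delta}(\mathbf{x}_j)\subseteq B_h(\mathbf{x})\subseteq B_{h+\delta}(\mathbf{x}_j)$, so
\begin{equation}
n_{h-\delta}(\mathbf{x}_j) \le n_h(\mathbf{x}) \le n_{h+\delta}(\mathbf{x}_j).
\end{equation}
Therefore it suffices to control $n_{h-\delta}(\mathbf{x}_j)$ from below by $n_0$ and $n_{h+\delta}(\mathbf{x}_j)$ from above by $n_M$ simultaneously across all $J$ net points.

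I would choose $\delta$ as a small fraction of $h$, for instance $\delta = h/\ln N$, so that the expectations $Np_{h\pm\delta}(\mathbf{x}_j)$ remain inside a shrunk interval such as $[\tfrac{3}{2}n_0,\tfrac{3}{4}n_M]$. The same Chernoff estimate applied at radii $h\pm\delta$ then still gives exponential decay $e^{-c'Nh^d}$ with a slightly smaller constant $c'$. A union bound over the $J\lesssim(\ln N/h)^d$ net points produces a total failure probability at most $C(\ln N/h)^d e^{-c'Nh^d}$.

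The only point requiring care is the interplay between the net size (polynomial in $1/\delta$, hence polylogarithmic in $N$ divided by a polynomial in $h$) and the Chernoff exponential $e^{-c'Nh^d}$. The constraint is that $Nh^d$ dominate $d\log(\ln N/h)+\ln N$, which is comfortably ensured by the paper's running hypothesis on $h$, so the bound is eventually $\le 1/N$. This is a routine net-plus-Chernoff calculation and I do not foresee a substantive obstacle beyond the bookkeeping of constants when choosing $\delta$.
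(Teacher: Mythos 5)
Your argument is correct, but it takes a genuinely different route from the paper. The paper does not use a net at all: it invokes Lemma~3 of Jiang (2017), a uniform relative-deviation (VC-type) bound stating that with probability at least $1-1/N$, $|p_h(\mathbf{x}) - n_h(\mathbf{x})/N| \leq \beta_N\sqrt{p_h(\mathbf{x})} + \beta_N^2$ simultaneously for all $\mathbf{x}$, with $\beta_N=\sqrt{4(d+3)\ln(2N)/N}$; it then checks that $\beta_N/\sqrt{p_h(\mathbf{x})}\leq 1/3$ under the bandwidth hypothesis, so the relative error is at most $4/9$ and the two-sided bound $\tfrac{5}{9}Np_h(\mathbf{x})\leq n_h(\mathbf{x})\leq \tfrac{13}{9}Np_h(\mathbf{x})$ sandwiches $n_h$ between $n_0$ and $n_M$. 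Your covering argument --- a $\delta$-net with $\delta=h/\ln N$, the inclusion $B_{h-\delta}(\mathbf{x}_j)\subseteq B_h(\mathbf{x})\subseteq B_{h+\delta}(\mathbf{x}_j)$, pointwise Chernoff at the perturbed radii, and a union bound over $O((\ln N/h)^d)$ net points --- replaces the citation with a self-contained elementary proof at the cost of the radius-perturbation bookkeeping; the paper's route is shorter and gives cleaner constants but leans on an external uniform convergence result for the infinite class of balls. Two minor points of care in your version: the lower-tail step at radius $h-\delta$ still needs Assumption~1(c) at that slightly smaller radius (which holds since $h-\delta<h<D$), and the ``running hypothesis on $h$'' you invoke must be read as $Nh^d\gtrsim \ln^2 N$ (as the appendix itself uses, despite the literal statement of Assumption~3(a)) so that $e^{-cNh^d}$ beats the polynomial net size; under that reading your union bound indeed lands below $1/N$.
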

\begin{lem}\label{lem:localw}
	Let $[N] = \{1,\ldots, N \}$. Then
	\begin{eqnarray}
		\text{P}\left(\exists \mathbf{x}\in \mathbf{X}, \exists k\in [N],\left|\sum_{i\in \mathcal{N}_k(\mathbf{x})}W_i\right|>\sigma\max\{\sqrt{k}\ln N, \ln^2 N \}\right)\leq 2dN^{2d+1} e^{-\frac{1}{2}\ln^2 N}.
	\end{eqnarray}
\end{lem}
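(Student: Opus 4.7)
The plan is to reduce the uniform deviation bound over a continuous parameter $\mathbf{x}$ and a discrete parameter $k$ to a finite union bound, exploiting the fact that $\mathcal{N}_k(\mathbf{x})$ is a piecewise-constant function of $\mathbf{x}$. The key geometric observation is that the ordering of the $N$ samples by distance from $\mathbf{x}$ changes only when $\mathbf{x}$ crosses a perpendicular bisector hyperplane $\{\mathbf{z} : \|\mathbf{z}-\mathbf{X}_i\| = \|\mathbf{z}-\mathbf{X}_j\|\}$ for some pair $i \neq j$. There are $\binom{N}{2}$ such hyperplanes, and by the classical arrangement bound $\sum_{i=0}^{d}\binom{m}{i}$ on the number of full-dimensional cells produced by $m$ hyperplanes in $\mathbb{R}^d$, they partition $\mathbb{R}^d$ into at most $O(N^{2d})$ open cells. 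Within each cell the distance ranking is frozen, so the entire list $\{\mathcal{N}_k(\mathbf{x})\}_{k=1}^{N}$ depends only on which cell $\mathbf{x}$ lies in. Multiplying by the $N$ choices of $k$, there are at most $dN^{2d+1}$ distinct subsets $S \subseteq [N]$ that can arise as $\mathcal{N}_k(\mathbf{x})$ for some $(\mathbf{x},k)$.

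Next I would apply a sub-exponential Bernstein-type inequality to each such fixed candidate subset. For any fixed $S$ of size $k$, the MGF bound in Assumption 1(d) tensorizes, and a standard Chernoff argument with optimization of $\lambda$ over $[0,1/\sigma]$ gives
\begin{eqnarray}
\text{P}\left(\left|\sum_{i\in S} W_i\right|>t\right)\leq 2\exp\left(-\min\left\{\frac{t^2}{2k\sigma^2},\;\frac{t}{2\sigma}\right\}\right).
\end{eqnarray}
Substituting $t=\sigma\max\{\sqrt{k}\ln N, \ln^2 N\}$, a short case split on whether $k\geq \ln^2 N$ or $k<\ln^2 N$ shows that in the first case the Gaussian term $t^2/(2k\sigma^2)$ dominates and equals $\tfrac{1}{2}\ln^2 N$, while in the second case the linear term $t/(2\sigma)$ dominates and also equals $\tfrac{1}{2}\ln^2 N$. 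Hence each single-subset probability is at most $2\exp(-\tfrac12 \ln^2 N)$.

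Finally I would take a union bound over the polynomial family of $dN^{2d+1}$ candidate subsets identified in the first step. Combined with the per-subset tail estimate, this yields the claimed probability $2dN^{2d+1}\exp(-\tfrac12\ln^2 N)$, noting that the factor of 2 in the tail bound and the factor $dN^{2d+1}$ from the arrangement count combine to exactly the stated constant. The main obstacle is the geometric counting step: obtaining the correct polynomial exponent $2d$ for the number of distinct $k$-NN index sets requires the hyperplane arrangement bound rather than a naive $\binom{N}{k}$ count (which would be useless), and one must be careful that the \emph{bisector} arrangement (and not a finer one) already freezes the full ranking within each cell. The probabilistic step, once the two regimes of $t$ are matched to the two regimes in the sub-exponential tail, is essentially mechanical.
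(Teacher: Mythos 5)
Your proposal is correct and follows essentially the same route as the paper: the same perpendicular-bisector hyperplane arrangement argument bounding the number of distinct sets $\mathcal{N}_k(\mathbf{x})$ by $dN^{2d+1}$, the same sub-exponential Chernoff bound conditional on the sample positions, and the same two-regime case split at $k = \ln^2 N$ showing the exponent equals $\tfrac{1}{2}\ln^2 N$ in both cases. No gaps.
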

Therefore
\begin{eqnarray}
	\text{P}(E^c) \leq \frac{3}{N} + 2dN^{2d+1} e^{-\frac{1}{2}\ln^2 N}.
\end{eqnarray}

Now we bound $\ell_\infty$ error with the condition that $E$ is true. 
\begin{eqnarray}
	|\hat{\eta}_0(\mathbf{x}) - \eta(\mathbf{x})|\leq |\hat{\eta}_0(\mathbf{x}) - a(\mathbf{x})| + |a(\mathbf{x})-b(\mathbf{x})|+|b(\mathbf{x}) - \eta(\mathbf{x})|.
	\label{eq:decomp}
\end{eqnarray}
\textbf{Bound of the first term in \eqref{eq:decomp}}. 
Under $E$, condition (a), (b) in Lemma \ref{lem:e2} are satisfied. Moreover, from \eqref{eq:assumeh} and \eqref{eq:n0}, condition (c) also hold. According to Lemma \ref{lem:e2},
\begin{eqnarray}
	|\hat{\eta}_0(\mathbf{x}) - a(\mathbf{x})|\leq \frac{2TC_Kq}{c_Kn_0}.
\end{eqnarray}
\textbf{Bound of the second term in \eqref{eq:decomp}}. Recall that $Z\leq 2Lh+8\sigma \ln N< T$, from Lemma \ref{lem:nodiff}, $a(\mathbf{x})-b(\mathbf{x}) = 0$.

\textbf{Bound of the third term in \eqref{eq:decomp}}. We use the following additional lemma:
\begin{lem}\label{lem:K}
	If $E$ is true, then
	\begin{eqnarray}
		\left|\sum_{i=1}^N K\left(\frac{\mathbf{x}-\mathbf{X}_i}{h}\right) W_i\right|\leq C_K\sqrt{n_M}\ln N.
	\end{eqnarray}
\end{lem}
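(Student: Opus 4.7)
The plan is to exploit the monotonicity hypothesis on $K$ (assumed in Theorem \ref{thm:sup}) through a summation-by-parts argument, combined with the uniform partial-sum control provided by item (3) of event $E$. Only samples inside $B_h(\mathbf{x})$ contribute to the sum (since $K$ is supported in the unit ball by Assumption \ref{ass:kernel}(b)), and by item (2) of $E$, there are at most $n_M$ of them.

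First, I would relabel the samples in $B_h(\mathbf{x})$ as $\mathbf{X}_{(1)},\ldots,\mathbf{X}_{(n_h(\mathbf{x}))}$ in order of increasing distance from $\mathbf{x}$. Since every sample outside $B_h(\mathbf{x})$ is farther from $\mathbf{x}$ than every sample inside, for every $k\leq n_h(\mathbf{x})$ the prefix $\{\mathbf{X}_{(1)},\ldots,\mathbf{X}_{(k)}\}$ coincides with the set of $k$ nearest neighbors of $\mathbf{x}$, i.e.\ $\mathcal{N}_k(\mathbf{x})$. Write $K_k = K((\mathbf{X}_{(k)}-\mathbf{x})/h)$ and $W_{(k)}$ for the associated noise, and set $S_k=\sum_{j=1}^k W_{(j)}$. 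By the monotonicity hypothesis and Assumption \ref{ass:kernel}(c), $0 \leq K_{n_h(\mathbf{x})} \leq \cdots \leq K_1 \leq C_K$. Item (3) of $E$ then yields $|S_k|\leq \sigma \max\{\sqrt{k}\ln N,\ln^2 N\}$ for every $k$; since Assumption \ref{ass:parameter}(a) gives $n_M \gtrsim Nh^d \gtrsim \ln^{2d} N \geq \ln^2 N$ and $k\leq n_h(\mathbf{x})\leq n_M$, both branches of the max are dominated by $\sqrt{n_M}\ln N$, so $|S_k|\leq \sigma\sqrt{n_M}\ln N$ uniformly in $k$.

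Finally, I would apply Abel summation to the rewritten sum
\[
\sum_{i=1}^N K\!\left(\frac{\mathbf{x}-\mathbf{X}_i}{h}\right)W_i = \sum_{k=1}^{n_h(\mathbf{x})} K_k W_{(k)} = \sum_{k=1}^{n_h(\mathbf{x})-1}(K_k-K_{k+1})\,S_k + K_{n_h(\mathbf{x})}\,S_{n_h(\mathbf{x})}.
\]
Since $K_k-K_{k+1}\geq 0$ and $K_{n_h(\mathbf{x})}\geq 0$, the triangle inequality together with the uniform bound on $|S_k|$ gives
\[
\left|\sum_{i=1}^N K\!\left(\frac{\mathbf{x}-\mathbf{X}_i}{h}\right)W_i\right| \leq \sigma\sqrt{n_M}\ln N\bigl[(K_1-K_{n_h(\mathbf{x})})+K_{n_h(\mathbf{x})}\bigr] = \sigma\sqrt{n_M}\ln N\cdot K_1 \leq C_K\sigma\sqrt{n_M}\ln N,
\]
which is the claim (with $\sigma$ absorbed into the constant under the $\lesssim$ convention).

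The main obstacle is that item (3) of $E$ only supplies a $\sqrt{k}$-type partial-sum bound, which is far too crude to be applied term-by-term against the $K_k$'s without losing an extra factor that grows with $n_h(\mathbf{x})$. The telescoping $\sum_{k=1}^{n_h-1}(K_k-K_{k+1})=K_1-K_{n_h}$ is what turns the $\sqrt{k}$-type partial-sum estimate into a clean $\sqrt{n_M}$-type bound with no extra loss, and this telescoping is available precisely because the non-negativity of $K_k-K_{k+1}$ lets us pull the worst-case $|S_k|$ out of the sum. This is the single place where the monotonicity hypothesis on $K$, added only in Theorem \ref{thm:sup}, is truly used.
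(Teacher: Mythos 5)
Your proof is correct and rests on the same essential mechanism as the paper's: both arguments exploit the monotonicity of $K$ to rewrite the weighted noise sum as a nonnegative combination of nearest-neighbor prefix sums $\sum_{i\in\mathcal{N}_k(\mathbf{x})}W_i$, which item (3) of $E$ controls uniformly. The technical device differs, though. The paper slices the kernel into $m$ horizontal level sets above $c_K$, writes a truncated kernel $K_T$ as a sum of ball indicators, bounds each layer via Lemma \ref{lem:localw}, and then lets $m\to\infty$ to kill the truncation error; your discrete Abel summation achieves the same telescoping $\sum_k(K_k-K_{k+1})+K_{n_h(\mathbf{x})}=K_1\leq C_K$ in one step, with no limiting argument and no truncation-error term (the paper's bound \eqref{eq:trunc} on that error term is in fact stated loosely), so your route is arguably cleaner and more watertight. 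Two small points. First, your chain $Nh^d\gtrsim\ln^{2d}N$ does not follow from Assumption \ref{ass:parameter}(a) as literally stated: $h>\ln^2 N/N$ only gives $Nh^d>N^{1-d}\ln^{2d}N$. What you actually need, and what the paper itself uses in its other proofs, is $Nh^d\gtrsim\ln^2 N$, i.e.\ the reading $h\gtrsim(\ln^2 N/N)^{1/d}$, under which $\ln^2 N\leq\sqrt{n_M}\ln N$ holds exactly as you claim. Second, your final constant is $C_K\sigma$ rather than $C_K$; the paper's own proof silently drops the same $\sigma$, so this is a shared, harmless discrepancy with the lemma as stated.
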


From \eqref{eq:ekernel} and Lemma \ref{lem:K}, 
\begin{eqnarray}
	|b(\mathbf{x}) - \eta(\mathbf{x})|\leq Lh+\frac{C_K}{n_0c_K}\sqrt{n_M}\ln N.
\end{eqnarray}
Substitute these results into \eqref{eq:decomp}, and recall that $n_0\sim Nh^d$, $n_M\sim Nh^d$, under $E$,
\begin{eqnarray}
	|\hat{\eta}_0(\mathbf{x}) - \eta(\mathbf{x})|\lesssim \frac{Tq}{Nh^d} + h +\frac{\ln N}{\sqrt{Nh^d}},
\end{eqnarray}
Under $E^c$, $|\hat{\eta}_0(\mathbf{x}) - \eta(\mathbf{x})|$ can be bounded by $2M$, hence
\begin{eqnarray}
	\mathbb{E}\left[\|\hat{\eta}_0-\eta\|\right] &\leq& \mathbb{E}\left[\|\hat{\eta}_0-\eta\|\mathbf{1}(E)\right]+2MP(E^c)\nonumber\\
	&\lesssim & \frac{Tq}{Nh^d} + h +\frac{\ln N}{\sqrt{Nh^d}}.
\end{eqnarray}
The proof is complete.

\section{Proof of Theorem 3: Minimax Convergence Rate}\label{sec:mmx}
The proof begins with the following lemma.
\begin{lem}\label{lem:attack}
	Let $p_1$, $p_2$ be the pdf of $\mathcal{N}(\mu_1,\sigma^2 )$ and $\mathcal{N}(\mathbf{\mu}_2, \sigma^2)$ respectively, with $\mathcal{N}$ being the normal distribution. Then there exists $\alpha \in [0, |\mathbf{\mu}_1-\mathbf{\mu}_2|/(2\sigma)]$ and two other pdfs $q_1$, $q_2$, such that
	\begin{eqnarray}
		(1-\alpha)p_1 + \alpha q_1 = (1-\alpha) p_2+\alpha q_2.
		\label{eq:attack}
	\end{eqnarray}
\end{lem}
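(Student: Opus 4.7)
The plan is to reduce the identity \eqref{eq:attack} to the existence of a common mixture distribution $M := (1-\alpha) p_1 + \alpha q_1 = (1-\alpha) p_2 + \alpha q_2$, and then to exhibit one explicitly with $\alpha$ equal to the total variation distance between $p_1$ and $p_2$. Concretely, any pdf $M$ with $M \geq (1-\alpha)p_1$ and $M \geq (1-\alpha)p_2$ pointwise yields valid pdfs $q_i := (M - (1-\alpha)p_i)/\alpha$, since nonnegativity is exactly the pointwise domination and $\int q_i = 1$ follows from $\int M = \int p_i = 1$ and $(1-\alpha)+\alpha=1$. So the task reduces to finding such an $M$.

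Existence of $M$ only needs $(1-\alpha)\max(p_1,p_2)$ to have total mass at most $1$. Using the elementary identity $\max(a,b) = \tfrac{1}{2}(a+b+|a-b|)$, one gets $\int \max(p_1,p_2) = 1 + \|p_1-p_2\|_{TV}$, so the requirement becomes $(1-\alpha)(1 + \|p_1-p_2\|_{TV}) \leq 1$. The clean choice $\alpha = \|p_1-p_2\|_{TV}$ reduces this to $1 - \alpha^2 \leq 1$, which holds trivially. For this $\alpha$ one may then take, e.g.,
\begin{equation*}
M \;=\; (1-\alpha)\max(p_1,p_2) \,+\, \tfrac{\alpha^2}{1-\alpha}\min(p_1,p_2),
\end{equation*}
which is a nonnegative function integrating to $(1-\alpha)(1+\alpha)+\alpha^2 = 1$ and pointwise dominates $(1-\alpha)p_i$ for $i=1,2$; the corresponding $q_1, q_2$ are then valid pdfs.

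It remains to check that this choice of $\alpha$ falls in the interval $[0, |\mu_1-\mu_2|/(2\sigma)]$. Here we exploit the Gaussian structure: assuming WLOG $\mu_1 < \mu_2$, one has $p_1 \geq p_2$ exactly on $(-\infty,(\mu_1+\mu_2)/2]$, and a direct computation gives
\begin{equation*}
\|p_1 - p_2\|_{TV} \;=\; 2\Phi\!\left(\frac{|\mu_1-\mu_2|}{2\sigma}\right) - 1.
\end{equation*}
Writing $2\Phi(z) - 1 = \int_{-z}^z \phi(u)\,du \leq 2z\,\phi(0) = z\sqrt{2/\pi}$ for $z\geq 0$ and noting $\sqrt{2/\pi}<1$, we obtain $\alpha \leq |\mu_1-\mu_2|/(2\sigma)$, as required (the case $\mu_1=\mu_2$ is trivial with $\alpha=0$).

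The whole argument is essentially routine; the only thing that could feel obstructive is recognizing that the statement is really about a two-sided ``mass transport'' decomposition of a single mixture, after which one just picks any $M$ dominating both $(1-\alpha)p_i$. The quantitative bound on $\alpha$ then hinges solely on the classical TV estimate for equal-variance Gaussians, which is where the factor $|\mu_1-\mu_2|/(2\sigma)$ arises.
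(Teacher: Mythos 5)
Your proof is correct. The structural part is the same as the paper's: both arguments make the two contaminated mixtures equal to a common density built from $(1-\alpha)\max(p_1,p_2)$, and nonnegativity plus normalization of the $q_i$ is exactly the pointwise-domination condition you isolate. The differences are in the details. First, the paper takes $q_1\propto (p_2-p_1)\mathbf{1}(p_2\geq p_1)$ and $q_2\propto(p_1-p_2)\mathbf{1}(p_1> p_2)$, which forces the slightly smaller value $\alpha=\mathbb{TV}/(1+\mathbb{TV})$; you instead take $\alpha=\mathbb{TV}$ and pad the common density with a $\min(p_1,p_2)$ term to restore total mass $1$. Both are valid since only an upper bound on $\alpha$ is needed. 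Second, and more substantively, the quantitative step is reached by different routes: the paper bounds $\mathbb{TV}(p_1,p_2)\leq\sqrt{\tfrac12 D_{KL}(p_1\|p_2)}=|\mu_1-\mu_2|/(2\sigma)$ via Pinsker's inequality, while you compute the Gaussian total variation exactly as $2\Phi(|\mu_1-\mu_2|/(2\sigma))-1$ and use $2\Phi(z)-1\leq z\sqrt{2/\pi}\leq z$. Your route is more elementary (no KL divergence) and in fact yields the slightly sharper constant $\sqrt{2/\pi}$, though it is specific to the equal-variance Gaussian pair, whereas Pinsker would generalize immediately to other noise families. Minor housekeeping: your formula for $M$ presupposes $0<\alpha<1$, which holds here since the total variation between two distinct equal-variance Gaussians lies strictly in $(0,1)$, and you correctly dispose of the degenerate case $\mu_1=\mu_2$ separately.
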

\begin{proof}
	Let
	\begin{eqnarray}
		q_1&=&\frac{1-\alpha}{\alpha}(p_2-p_1)\mathbf{1}(p_2\geq p_1),\nonumber\\
		q_2 &=& \frac{1-\alpha}{\alpha}(p_1-p_2)\mathbf{1}(p_2<p_1,)
	\end{eqnarray}
	then \eqref{eq:attack} holds. Note that $q_1$ and $q_2$ need to be normalized:
	\begin{eqnarray}
		\int q_1(u)du &=& \frac{1-\alpha}{\alpha}\int (p_2(u)-p_1(u)) \mathbf{1}(p_2(u)\geq p_1(u))du\nonumber\\
		&=&\frac{1-\alpha}{\alpha}\mathbb{TV}(p_1,p_2),
	\end{eqnarray}
	in which $\mathbb{TV}$ is the total variation distance. Hence \eqref{eq:attack} can be achieved with
	\begin{eqnarray}
		\alpha = \frac{\mathbb{TV}(p_1,p_2)}{1+\mathbb{TV}(p_1,p_2)}.
		\label{eq:alpha}
	\end{eqnarray}
	From Pinsker's inequality,
	\begin{eqnarray}
		\mathbb{TV}(p_1,p_2)\leq \sqrt{\frac{1}{2}D_{KL}(p_1||p_2)}=\frac{|\mu_1-\mu_2|}{2\sigma}.
		\label{eq:pinsker}
	\end{eqnarray}
	With \eqref{eq:alpha} and \eqref{eq:pinsker}, the proof is complete.
\end{proof}
Let 
\begin{eqnarray}
	\eta_1(\mathbf{x}) &= & 0,\\
	\eta_2(\mathbf{x}) &=& L\max\{r-\|\mathbf{x}\|, 0\},
\end{eqnarray}
in which
\begin{eqnarray}
	r = \left(\frac{\sigma (d+1)q}{f_mLv_dN}\right)^\frac{1}{d+1}.
\end{eqnarray}
Let $\eta\in \{\eta_1,\eta_2\}$. Furthermore, assume that the noise variables are Gaussian, i.e. $W_i\sim \mathcal{N}(0, \sigma^2)$. Assume $f=f_m$ for $\mathbf{x}\in \mathcal{X}$. Design the attack strategy as following: go through all samples from $i=1$ to $N$, and initialize $|\mathcal{B}_0|=\emptyset$, then

(1) If $\|\mathbf{X}_i\|>r$, do not attack;

(2) If $\|\mathbf{X}_i\|\leq r$, then find $\alpha_i$, $q_{i1}$, $q_{i2}$ such that $(1-\alpha_i)p_{i1}+\alpha q_{i1}=(1-\alpha)p_{i2}+\alpha q_{i2}$, in which $p_{i1}$, $p_{i2}$ is the pdf of $\mathcal{N}(\eta_1(\mathbf{X}_i), \sigma^2)$ and $\mathcal{N}(\eta_2(\mathbf{X}_i), \sigma^2)$, respectively. With probability $\alpha_i$, incorporate sample $i$ into $\mathcal{B}_0$;

(3) Repeat (1), (2) for $i=1,\ldots, N$; 

(4) If $|\mathcal{B}_0|\leq q$, then attack all samples in $\mathcal{B}_0$. Otherwise, pick $q$ samples randomly from $\mathcal{B}_0$ to attack. For each attacked sample $i\in \mathcal{B}_0$, let it follow distribution $q_1$ if $\eta=\eta_1$ and $q_2$ if $\eta=\eta_2$.

Use Lemma \ref{lem:attack}, and denote $s_d$ as the $d-1$ dimensional surface area of a $d$ dimensional unit ball. $s_d=dv_d$. Then
\begin{eqnarray}
	\text{P}(i\in \mathcal{B}_0)&\leq& \int f(\mathbf{x})\frac{|\eta_2(\mathbf{x})-\eta_1(\mathbf{x})|}{2\sigma}d\mathbf{x}\nonumber\\
	&=&\frac{f_mL}{2\sigma}\int_{\|\mathbf{x}\|\leq r} (r-\|\mathbf{x}\|)d\mathbf{x}\nonumber\\
	&=&\frac{f_mL}{2\sigma}\int_0^r (r-u)s_du^{d-1}du\nonumber\\
	&=& \frac{f_mLs_d}{2\sigma d(d+1)}r^{d+1}\nonumber\\
	&=& \frac{q}{2N}.
\end{eqnarray}
From Chernoff inequality, it can be easily shown that $\text{P}(|\mathcal{B}_0|>q)\leq e^{-(\ln 2-1/2)q}$. Therefore with high probability, all samples in $\mathcal{B}_0$ will be attacked, and the distribution of $Y_i$ conditional on the value of $\mathbf{X}_i$ has no difference between $\eta_1$ and $\eta_2$. This indicates that $\eta_1$ and $\eta_2$ are indistinguishable. Therefore
\begin{eqnarray}
	\underset{\hat{\eta}_0}{\inf}\underset{(f,\eta,\mathbb{P}_N)\in \mathcal{F}}{\sup}\mathbb{E}\left[\underset{\mathcal{A}}{\sup}\left(\hat{\eta}_0(\mathbf{X})-\eta(\mathbf{X})\right)^2\right]&\geq & \underset{\hat{\eta}_0}{\inf}\underset{\eta\in \{\eta_1,\eta_2\}}{\sup}\mathbb{E}\left[\underset{\mathcal{A}}{\sup}\left(\hat{\eta}_0(\mathbf{X})-\eta(\mathbf{X})\right)^2\right]\nonumber\\
	&\geq & (1-\text{P}(|\mathcal{B}_0|>q))\frac{1}{4}\int (\eta_1(\mathbf{x})-\eta_2(\mathbf{x}))^2d\mathbf{x}\nonumber\\
	&\geq & \frac{1}{4}\left(1-e^{-\left(\ln 2-\frac{1}{2}\right) q}\right) L^2\int_0^r (r-u)^2s_d u^{d-1} du\nonumber\\
	&\gtrsim & r^{d+2}\nonumber\\
	&\gtrsim & \left(\frac{q}{N}\right)^\frac{d+2}{d+1},
	\label{eq:lb1}
\end{eqnarray}
and similarly, the $\ell_\infty$ error can be lower bounded by
\begin{eqnarray}
	\underset{\hat{\eta}}{\inf}\underset{(f,\eta,\mathbb{P}_N)\in \mathcal{F}}{\sup}\mathbb{E}\left[\underset{\mathcal{A}}{\sup}\underset{\mathbf{x}}{\sup}|\hat{\eta}(\mathbf{x})-\eta(\mathbf{x})|\right]&\geq& \frac{1}{4}\left(1-e^{-\left(\ln 2-\frac{1}{2}\right) q}\right) Lr\nonumber\\
	&\gtrsim & r\nonumber\\
	&\gtrsim & \left(\frac{q}{N}\right)^\frac{1}{d+1}.
	\label{eq:lb2}
\end{eqnarray}

Moreover, even if there are no adversarial samples, from standard minimax analysis \cite{tsybakov2009}, it can be easily shown that
\begin{eqnarray}
	\underset{\hat{\eta}_0}{\inf}\underset{(f,\eta,\mathbb{P}_N)\in \mathcal{F}}{\sup}\mathbb{E}\left[\left(\hat{\eta}_0(\mathbf{X})-\eta(\mathbf{X})\right)^2\right]\gtrsim N^{-\frac{2}{d+2}},
	\label{eq:lb3}
\end{eqnarray}
and
\begin{eqnarray}
	\underset{\hat{\eta}}{\inf}\underset{(f,\eta,\mathbb{P}_N)\in \mathcal{F}}{\sup}\mathbb{E}\left[\underset{\mathcal{A}}{\sup}\underset{\mathbf{x}}{\sup}|\hat{\eta}(\mathbf{x})-\eta(\mathbf{x})|\right]	\gtrsim N^{-\frac{1}{d+2}}.
	\label{eq:lb4}
\end{eqnarray}
Combine \eqref{eq:lb1}, \eqref{eq:lb2}, \eqref{eq:lb3} and \eqref{eq:lb4}, the proof is complete.
\section{Proof of Uniqueness of Corrected Estimator}\label{sec:unique}

Suppose that there are two solutions, $g_1^*$, $g_2^*$, such that $g_1^*(\mathbf{x})\neq g_2^*(\mathbf{x})$ for some $\mathbf{x}$, and
\begin{eqnarray}
	\|\hat{\eta}_0-g_1^*\|_1=\|\hat{\eta}_0-g_2^*\|\leq \|\hat{\eta}_0-g\|,
	\label{eq:solution}
\end{eqnarray}
for any $L$-Lipschitz function $g$. Since $g_1^*$ and $g_2^*$ are Lipschitz continuous, there must be a compact region around $\mathbf{x}$ such that $g_1^*\neq g_2^*$ everywhere in this region.

We first show that for all $\mathbf{x}$ with $g_1^*(\mathbf{x})\neq g_2^*(\mathbf{x})$,
\begin{eqnarray}
	(\hat{\eta}_0(\mathbf{x})-g_1^*(\mathbf{x}))(\hat{\eta}_0(\mathbf{x})-g_2^*(\mathbf{x}))\geq 0.	
	\label{eq:samesign}
\end{eqnarray}
Let $g_a= (g_1^*+g_2^*)/2$. If $\hat{\eta}_0(\mathbf{x})-g_1^*(\mathbf{x})$ and $\hat{\eta}_0(\mathbf{x})-g_2^*(\mathbf{x})$ have opposite sign, then
\begin{eqnarray}
	|g_a(\mathbf{x})-\hat{\eta}_0(\mathbf{x})|< \frac{1}{2}(|g_1^*(\mathbf{x})-\hat{\eta}_0(\mathbf{x})|+|g_2^*(\mathbf{x})-\hat{\eta}_0(\mathbf{x})|),
\end{eqnarray}
thus $\|g_a-\hat{\eta}_0\|_1< (\|g_1^*-\hat{\eta}_0\|_1+\|g_1^*-\hat{\eta}_0\|_1)$, contradicts \eqref{eq:solution}. Therefore \eqref{eq:samesign} holds. This indicates that the support $\mathcal{X}$ can be divided into $S_1$ and $S_2$, such that in $\max\{g_1^*, g_2^*\}\leq\hat{\eta}_0$ within $S_1$, $\min\{g_1^*, g_2^*\}\geq \hat{\eta}_0$ within $S_2$. 

Then let
\begin{eqnarray}
	g_b(\mathbf{x})=\left\{
	\begin{array}{ccc}
		\max\{g_1^*(\mathbf{x}), g_2^*(\mathbf{x})\} &\text{if} & x\in S_1\\
		\min\{g_1^*(\mathbf{x}), g_2^*(\mathbf{x})\} &\text{if} &x\in S_2,
	\end{array}
	\right.
\end{eqnarray}
then it can be easily shown that $g_b$ is Lipschitz and $\|\hat{\eta}_0-g\|<\max\{\|\hat{\eta}_0-g_1^*\|, \|\hat{\eta}_0-g_2^*\|\}$, contradicts \eqref{eq:solution}. The proof is complete.

\section{Proof of Theorem 4: Convergence Rate of Corrected Estimator}\label{sec:converge}
Denote $F[\eta]$ as the solution of the optimization problem
\begin{mini}
	{g}{\|\eta-g\|_1}{}{}
	\addConstraint{\|\nabla g\|_\infty \leq L.}{}{}
	\label{eq:opt}
\end{mini}
Then the corrected estimate is $\hat{\eta}_c=F[\hat{\eta}_0]$, with $\hat{\eta}_0$ being the initial estimate. The following lemma holds:
\begin{lem}\label{lem:monotone}
	For some $\eta_1$, $\eta_2$, If $\eta_1(\mathbf{x})\leq \eta_2(\mathbf{x})$, $\forall \mathbf{x}\in \mathcal{X}$, then $F[\eta_1](\mathbf{x})\leq F[\eta_2](\mathbf{x}), \forall \mathbf{x}\in \mathcal{X}$.
\end{lem}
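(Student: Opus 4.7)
The plan is to argue by a pointwise swap (truncation) argument combined with the uniqueness result proved in section \ref{sec:unique}. Let $g_1 = F[\eta_1]$ and $g_2 = F[\eta_2]$, and define the candidate replacements
\begin{eqnarray}
\tilde g_1(\mathbf{x}) = \min\{g_1(\mathbf{x}), g_2(\mathbf{x})\}, \qquad \tilde g_2(\mathbf{x}) = \max\{g_1(\mathbf{x}), g_2(\mathbf{x})\}.
\end{eqnarray}
First I would verify that both $\tilde g_1$ and $\tilde g_2$ lie in the feasible set $\{g : \|\nabla g\|_\infty \leq L\}$. This is routine since the pointwise minimum or maximum of two $L$-Lipschitz (in each coordinate) functions inherits the same per-coordinate Lipschitz constant, so $\tilde g_1$ and $\tilde g_2$ are valid competitors in the optimization \eqref{eq:optim_app}.

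Next I would establish the key pointwise inequality
\begin{eqnarray}
|\eta_1(\mathbf{x}) - \tilde g_1(\mathbf{x})| + |\eta_2(\mathbf{x}) - \tilde g_2(\mathbf{x})| \leq |\eta_1(\mathbf{x}) - g_1(\mathbf{x})| + |\eta_2(\mathbf{x}) - g_2(\mathbf{x})|.
\end{eqnarray}
At points where $g_1(\mathbf{x}) \leq g_2(\mathbf{x})$ both sides coincide, and at points where $g_1(\mathbf{x}) > g_2(\mathbf{x})$ the inequality reduces to the four-point rearrangement fact: for reals $a \leq b$ and $d < c$, $|a-d| + |b-c| \leq |a-c| + |b-d|$, which is a straightforward case analysis on the relative order of $a, b, c, d$. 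Integrating over $\mathcal{X}$ yields
\begin{eqnarray}
\|\eta_1 - \tilde g_1\|_1 + \|\eta_2 - \tilde g_2\|_1 \leq \|\eta_1 - g_1\|_1 + \|\eta_2 - g_2\|_1.
\end{eqnarray}

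Finally I would exploit the optimality of $g_1$ and $g_2$. Since $\tilde g_1$ and $\tilde g_2$ are feasible for the problems defining $F[\eta_1]$ and $F[\eta_2]$ respectively, the defining property of the projection forces $\|\eta_1 - g_1\|_1 \leq \|\eta_1 - \tilde g_1\|_1$ and $\|\eta_2 - g_2\|_1 \leq \|\eta_2 - \tilde g_2\|_1$. Combined with the display above, both of these must hold with equality, so $\tilde g_1$ and $\tilde g_2$ are also optimal for their respective problems. Invoking the uniqueness statement from section \ref{sec:unique}, we conclude $\tilde g_1 = g_1$ and $\tilde g_2 = g_2$ everywhere, which is exactly $g_1 \leq g_2$ on $\mathcal{X}$.

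The main obstacle I anticipate is clean handling of the first step, namely checking that $\min$ and $\max$ of two $L$-Lipschitz functions respect the anisotropic constraint $\|\nabla g\|_\infty = \max_k |\partial g/\partial x_k| \leq L$ (rather than a Euclidean gradient norm). This is true but requires a short argument via finite differences or Rademacher's theorem; once that is dispatched, the rearrangement step and the uniqueness appeal are essentially automatic.
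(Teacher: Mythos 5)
Your proposal is correct and is essentially the same argument as the paper's: both use the feasibility of $\min\{g_1,g_2\}$ and $\max\{g_1,g_2\}$, the four-point rearrangement inequality on the set where $g_1>g_2$, and the uniqueness of the projection. The only difference is organizational — you sum the two comparisons and conclude equality before invoking uniqueness, whereas the paper runs the same ingredients as a proof by contradiction — so there is nothing substantive to add.
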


Denote $E$ as the event that $\underset{x}{\inf}n_h(\mathbf{x})\geq n_0$, $\underset{x}{\sup} n_h(\mathbf{x})\leq 3NfMv_dh^d/2$, and $Z\leq 2Lh+8\sigma\ln N$. From Lemma \ref{lem:Z}, $\text{P}(Z>2Lh+8\sigma \ln N)\leq 2/N$. Combine with Lemma \ref{lem:n}, the probability of violating $E$ can be bounded by
\begin{eqnarray}
	\text{P}(E^c)\leq \frac{3}{N},
\end{eqnarray}
in which $E^c$ is the complement of $E$.

In the following analysis, we bound the estimation error under the condition that $E$ is true. We show the following additional lemma:
\begin{lem}\label{lem:largek}
	\begin{eqnarray}
		\text{P}\left(\left|\sum_{i\in I_h(\mathbf{x})}K\left(\frac{\mathbf{x}-\mathbf{X}_i}{h}\right) W_i\right|> 3C_k\sigma\sqrt{Nf_Mv_dh^d\ln N}|E\right)\leq \frac{2}{N^2}.
	\end{eqnarray}
\end{lem}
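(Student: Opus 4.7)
The plan is to apply a Bernstein-type concentration inequality for sums of independent subexponential random variables, conditioning on the sample locations and using event $E$ to control the effective size of the sum.

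I would first observe that, conditional on $\mathbf{X}_1,\ldots,\mathbf{X}_N$, the quantity $S := \sum_{i\in I_h(\mathbf{x})} K((\mathbf{x}-\mathbf{X}_i)/h) W_i$ is a weighted sum $\sum_i a_i W_i$ with deterministic coefficients $a_i := K((\mathbf{x}-\mathbf{X}_i)/h)\mathbf{1}(i\in I_h(\mathbf{x}))$. By Assumption \ref{ass:kernel}(c), $|a_i|\leq C_K$, and under event $E$ we have $n_h(\mathbf{x})\leq n_M = \tfrac{3}{2}Nf_M v_d h^d$, so $\sum_i a_i^2 \leq C_K^2 n_M$. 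Since each $W_i$ is subexponential with parameter $\sigma$ (Assumption \ref{ass:problem}(d)) and is independent of the $\mathbf{X}_i$'s, the MGF of $a_i W_i$ satisfies $\mathbb{E}[e^{\lambda a_i W_i}]\leq e^{\sigma^2 a_i^2 \lambda^2/2}$ whenever $|\lambda|\leq 1/(\sigma C_K)$.

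Next, I would apply the standard Bernstein inequality for subexponentials to $S$ conditionally on $\mathbf{X}_1,\ldots,\mathbf{X}_N$ lying in the $\mathbf{X}$-measurable part of $E$. This gives $\text{P}(|S|>t\mid \{\mathbf{X}_i\})\leq 2\exp(-\min\{t^2/(2\sigma^2 \sum_i a_i^2),\,t/(2\sigma C_K)\})$. Plugging in $t = 3 C_K \sigma \sqrt{N f_M v_d h^d \ln N}$, the Gaussian (quadratic) branch yields exponent at least $3\ln N$, while the sub-exponential (linear) branch yields exponent $\tfrac{3}{2}\sqrt{N f_M v_d h^d \ln N}$, which also exceeds $2\ln N$ whenever $Nh^d \gtrsim \ln N$, a condition easily satisfied in all the bandwidth regimes used later in the paper. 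Hence the conditional tail is at most $2/N^3$.

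Finally, the $W$-measurable part of $E$ (namely $Z\leq 2Lh+8\sigma\ln N$) is independent of $\{\mathbf{X}_i\}$ and has probability at least $1-2/N$ by Lemma \ref{lem:Z}, and the $\mathbf{X}$-measurable part has probability at least $1-O(1/N)$ by Lemma \ref{lem:n}, so $\text{P}(E)\geq 1/2$ for large $N$. Integrating the conditional Bernstein bound over $\{\mathbf{X}_i\}$ in the $\mathbf{X}$-part of $E$ and then dividing by $\text{P}(E)$ converts the $2/N^3$ joint bound into the stated $\text{P}(|S|>t\mid E)\leq 2/N^2$. The only subtle point is carefully matching the two branches of Bernstein at this specific threshold; otherwise the argument is a routine subexponential concentration and I do not expect a serious obstacle.
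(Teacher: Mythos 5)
Your proposal is correct and follows essentially the same route as the paper: condition on the sample locations, apply the sub-exponential Chernoff/Bernstein bound to the weighted sum whose weights are bounded by $C_K$, and use the event $E$ to bound $n_h(\mathbf{x})\leq \tfrac{3}{2}Nf_Mv_dh^d$ so that the stated threshold dominates $2C_K\sigma\sqrt{n_h(\mathbf{x})\ln N}$. If anything, you are more careful than the paper on one point: since $E$ also constrains the noise through $Z$, passing from the conditional-on-$\mathbf{X}$ tail bound to a bound conditional on $E$ requires the extra division by $\text{P}(E)\geq 1/2$ that you include, a step the paper's proof silently omits.
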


With these lemmas, we analyze the corrected estimator $\hat{\eta}_c$ under $E$. To begin with, $\hat{\eta}_0$ satisfies
\begin{eqnarray}
	|\hat{\eta}_0(\mathbf{x})-\eta(\mathbf{x})|\leq |\hat{\eta}_0(\mathbf{x}) - a(\mathbf{x})|+|a(\mathbf{x}) - b(\mathbf{x})|+|b(\mathbf{x}) - \eta(\mathbf{x})|.
\end{eqnarray}
From Lemma \ref{lem:e2}, Under $E$,
\begin{eqnarray}
	|\hat{\eta}_0(\mathbf{x}) - a(\mathbf{x})|\leq \frac{2TC_Kq_h(\mathbf{x})}{c_Kn_0}+2M\mathbf{1}\left(q_h(\mathbf{x}) > \frac{c_kn_0}{3C_K+c_K}\right);
\end{eqnarray}
From Lemma \ref{lem:nodiff}, under $E$, and Assumption 3, $Z\leq T/2$, thus $a(\mathbf{x})-b(\mathbf{x}) = 0$;

From \eqref{eq:ekernel},
\begin{eqnarray}
	|b(\mathbf{x}) - \eta(\mathbf{x})|&\leq& Lh+\frac{3C_K\sigma}{c_Kn_0}\sqrt{Nf_Mv_dh^d\ln N} \nonumber\\
	&&+ 2M \mathbf{1}\left(\left|\sum_{i\in I_h(\mathbf{x})}K\left(\frac{\mathbf{x}-\mathbf{X}_i}{h}\right) W_i\right|> 3C_K\sigma\sqrt{Nf_Mv_dh^d\ln N}\right).
\end{eqnarray}
Define
\begin{eqnarray}
	S:=\left\{\mathbf{x}|\left|\sum_{i\in I_h(\mathbf{x})}K\left(\frac{\mathbf{x}-\mathbf{X}_i}{h}\right) W_i\right|> 3C_K\sigma\sqrt{Nf_Mv_dh^d\ln N} \right\},
\end{eqnarray}
\begin{eqnarray}
	\delta(\mathbf{x}):= \frac{2TC_Kq_h(\mathbf{x})}{c_Kn_0}+Lh + 3C_K\sigma \sqrt{Nf_Mv_dh^d\ln N}+2M\mathbf{1}\left(q_h(\mathbf{x})>\frac{c_kn_0}{3C_K+c_K}\right)+2M\mathbf{1}(\mathbf{x}\in S).
	\label{eq:deltax}
\end{eqnarray}
Therefore, under $E$, $|\hat{\eta}_0(\mathbf{x}) - \eta(\mathbf{x})|\leq \delta(\mathbf{x})$. Furthermore, define
\begin{eqnarray}
	\eta_1(\mathbf{x})&:=& \eta(\mathbf{x}) - \delta(\mathbf{x}),\\
	\eta_2(\mathbf{x})&:= & \eta(\mathbf{x})+\delta(\mathbf{x}),
\end{eqnarray}
then $\eta_1\leq \hat{\eta}_0\leq \eta_2$ under $E$. From Lemma \ref{lem:monotone}, $F[\eta_1]\leq F[\eta_0]\leq F[\eta_2]$. The error of $F[\eta_0]$ can be bounded by the error of $F[\eta_1]$ and $F[\eta_2]$. Define
\begin{eqnarray}
	\Delta := Lh+3C_K\sigma \sqrt{NfMv_dh^d\ln N}.
\end{eqnarray}
The next lemma bounds $\|F[\eta_1]-\eta\|_2^2$ and $\|F[\eta_2] - \eta_2\|_2^2$:
\begin{lem}\label{lem:l2}
	Under $E$,
	\begin{eqnarray}
		\max\left\{\|F[\eta_1]-\eta\|_2^2,\|F[\eta_2]-\eta\|_2^2 \right\}\lesssim \left(\frac{Tq}{N}\right)^\frac{d+2}{d+1} + h^2+\frac{\ln N}{Nh^d}.
	\end{eqnarray}
\end{lem}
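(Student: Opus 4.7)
\textbf{Proof plan for Lemma \ref{lem:l2}.}

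The plan is to exploit two key structural facts: the projection operator $F$ satisfies $F[\eta + c] = F[\eta] + c$ for any constant $c$ (the constraint $\|\nabla g\|_\infty \leq L$ and the $\ell_1$ cost are both shift-invariant after substituting $g' = g - c$), and since $\eta$ is itself $L$-Lipschitz, it lies in the feasible set of \eqref{eq:opt}, so $F$ contracts $\ell_1$ distance to $\eta$. I will use the first fact to peel off the spatially constant nuisance term $\Delta$ appearing inside $\delta(\mathbf{x})$, and then handle the remaining ``spike'' part via a Lipschitz interpolation argument. Specifically, decompose $\delta(\mathbf{x}) = \delta_s(\mathbf{x}) + \Delta$, where
\[
\delta_s(\mathbf{x}) := \tfrac{2TC_Kq_h(\mathbf{x})}{c_Kn_0} + 2M\,\mathbf{1}\!\left(q_h(\mathbf{x})>\tfrac{c_Kn_0}{3C_K+c_K}\right) + 2M\,\mathbf{1}(\mathbf{x}\in S),
\]
and write $\eta_2 = \eta + \delta_s + \Delta$. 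Shift-invariance gives $F[\eta_2] = F[\eta + \delta_s] + \Delta$, hence $|F[\eta_2] - \eta|^2 \leq 2|F[\eta + \delta_s] - \eta|^2 + 2\Delta^2$, and the $\Delta^2$ piece, when integrated over the compact $\mathcal{X}$, contributes exactly the $h^2 + \ln N/(Nh^d)$ term in the target bound.

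Next I bound $F[\eta + \delta_s] - \eta$. Since $\eta$ is feasible for \eqref{eq:opt} applied to $\eta+\delta_s$, we have $\|F[\eta+\delta_s] - (\eta+\delta_s)\|_1 \leq \|\delta_s\|_1$, so by the triangle inequality $\|F[\eta+\delta_s] - \eta\|_1 \leq 2\|\delta_s\|_1$. The three pieces of $\|\delta_s\|_1$ are all controlled by $Tq/N$: the linear piece uses $\int q_h(\mathbf{x})\,d\mathbf{x} \leq q v_d h^d$ against $n_0 \sim Nh^d$; the $\mathbf{1}(q_h>\cdot)$ piece uses Markov's inequality applied to $q_h/n_0$; and the $\mathbf{1}(\mathbf{x}\in S)$ piece is negligible after taking expectation (or intersecting with a higher-probability event), since by Lemma \ref{lem:largek} the pointwise probability of $\mathbf{x}\in S$ under $E$ is $O(N^{-2})$, so $\mathbb{E}[\mathrm{Vol}(S)\mid E] \lesssim N^{-2}$. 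Hence $\|\delta_s\|_1 \lesssim Tq/N$ up to the stated conditioning.

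To upgrade the $\ell_1$ bound to $\ell_2$, observe that $F[\eta + \delta_s] - \eta$ is $2L$-Lipschitz (difference of two $L$-Lipschitz functions). If this difference attains the value $H$ at some $\mathbf{x}_0 \in \mathcal{X}$, then within $B(\mathbf{x}_0, H/(4L))$ its absolute value is $\geq H/2$; combined with the corner-shape Assumption~1(c), this ball meets $\mathcal{X}$ in volume $\gtrsim (H/L)^d$, giving $\|F[\eta+\delta_s] - \eta\|_1 \gtrsim H^{d+1}$. Therefore $\|F[\eta+\delta_s] - \eta\|_\infty \lesssim \|F[\eta+\delta_s] - \eta\|_1^{1/(d+1)}$, and consequently
\[
\|F[\eta+\delta_s] - \eta\|_2^2 \leq \|F[\eta+\delta_s] - \eta\|_\infty \cdot \|F[\eta+\delta_s] - \eta\|_1 \lesssim \|\delta_s\|_1^{(d+2)/(d+1)} \lesssim (Tq/N)^{(d+2)/(d+1)}.
\]
Combining with the $\Delta^2$ term yields the bound on $\|F[\eta_2] - \eta\|_2^2$, and the symmetric argument for $\eta_1 = \eta - \delta_s - \Delta$ is identical.

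The main obstacle is ensuring the argument is actually an a.s.\ bound under the event $E$ rather than merely an ``in expectation'' statement, since $\mathrm{Vol}(S)$ is genuinely random. The cleanest fix is to enlarge $E$ by an additional event of the form $\{\mathrm{Vol}(S) \leq C/N\}$, which holds with high probability by Markov applied to Lemma \ref{lem:largek}, so that $2M\,\mathrm{Vol}(S)$ is absorbed into the $Tq/N$ contribution; alternatively one can prove the lemma in expectation, which is all that is needed for Theorem \ref{thm:corrected}. The rest of the argument (shift-invariance, contraction, and the Lipschitz interpolation) is robust to this choice.
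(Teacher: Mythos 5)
Your proposal is correct and follows essentially the same route as the paper: compare $F[\eta_{1,2}]$ to the feasible Lipschitz function $\eta\mp\Delta$ (your shift-invariance step $F[\eta\pm\delta_s\pm\Delta]=F[\eta\pm\delta_s]\pm\Delta$ is algebraically the same comparison), bound the resulting $\ell_1$ distance by $2\|\delta_s\|_1\lesssim Tq/N$ plus the negligible $\mathrm{Vol}(S)$ contribution, and upgrade to $\ell_2$ via the same Lipschitz interpolation $\|g\|_2^2\leq\|g\|_1\|g\|_\infty\lesssim\|g\|_1^{(d+2)/(d+1)}$. Your two refinements --- invoking Assumption 1(c) so the interpolation ball has enough volume near the boundary of $\mathcal{X}$, and flagging that the $\mathrm{Vol}(S)$ term makes the bound an in-expectation statement unless $E$ is enlarged --- are both valid and slightly more careful than the paper's treatment, but do not change the argument.
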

Therefore
\begin{eqnarray}
	\|F[\hat{\eta}_0]-\eta\|_2^2\lesssim \left(\frac{Tq}{N}\right)^\frac{d+2}{d+1} + h^2+\frac{\ln N}{Nh^d}.
\end{eqnarray}
The overall risk can be bounded by
\begin{eqnarray}
	\mathbb{E}\left[\underset{\mathcal{A}}{\sup}\left(\hat{\eta}_c(\mathbf{X})-\eta(\mathbf{X})\right)^2\right]&\leq& f_M\mathbb{E}\left[\underset{\mathcal{A}}{\sup}\|\hat{\eta}_c-\eta\|_2^2\mathbf{1}(E)\right]+4M^2\text{P}(E^c)\nonumber\\
	&\lesssim & \left(\frac{Tq}{N}\right)^\frac{d+2}{d+1} + h^2+\frac{\ln N}{Nh^d}.
\end{eqnarray}
The proof of $\ell_2$ bound is complete.

For the $\ell_\infty$ bound, note that
\begin{eqnarray}
	\hat{\eta}_0\leq \eta + \|\hat{\eta}_0-\eta\|_\infty,
\end{eqnarray}
thus from Lemma \ref{lem:monotone},
\begin{eqnarray}
	\hat{\eta}_c = F[\hat{\eta}_0]\leq F[\eta + \|\hat{\eta}_0-\eta\|_\infty] = \eta + \|\hat{\eta}_0-\eta\|_\infty.
\end{eqnarray}
Similarly,
\begin{eqnarray}
	\hat{\eta}_c\geq \eta - \|\hat{\eta}_0-\eta\|_\infty.
\end{eqnarray}
Hence
\begin{eqnarray}
	\|\hat{\eta}_c-\eta\|_\infty\leq \|\hat{\eta}_0-\eta\|_\infty.
\end{eqnarray}
This indicates that the $\ell_\infty$ error of the corrected estimator does not exceed the initial estimator. Therefore, Theorem 2 can be directly used here.

\section{Proof of Lemmas}\label{sec:Lemmas}
\subsection{Proof of Lemma \ref{lem:Z}}
\textbf{Proof of \eqref{eq:Zbound}.} Recall Assumption 1(d). Let $W_{max}=\max_i W_i$, $W_{min} = \min_i W_i$ for $i=1,\ldots, N$. For $|\lambda|\leq 1/\sigma$,
\begin{eqnarray}
	\mathbb{E}[e^{\lambda W_{max}}] \leq \sum_{i=1}^N \mathbb{E}[e^{\lambda W_i}]\leq Ne^{\frac{1}{2}\lambda^2 \sigma^2}.
\end{eqnarray}
Then
\begin{eqnarray}
	\text{P}(W_{max} > t) &\leq& N \underset{|\lambda|\leq 1/\sigma}{\inf}e^{-\lambda t}e^{\frac{1}{2}\lambda^2 \sigma^2}\nonumber\\
	&\leq & \exp\left[-\min\left\{\frac{t^2}{2\sigma^2}, \frac{t}{2\sigma}\right\} + \ln N\right].
	\label{eq:wmax}
\end{eqnarray}
Similar bound holds for $W_{min}$. Then
\begin{eqnarray}
	\text{P}(W_{max} - W_{min} > t)&\leq &\text{P} \left(W_{max} > \frac{t}{2}\right) + \text{P}\left(W_{min} < -\frac{t}{2}\right)\nonumber\\
	&\leq & 2\exp\left[-\left\{\frac{t^2}{8\sigma^2}, \frac{t}{4\sigma}\right\} +\ln N\right].
\end{eqnarray}
From \eqref{eq:Z}, $Z=W_{max} - W_{min} +2Lh^\gamma$, hence \eqref{eq:Zbound} holds.

\textbf{Proof of \eqref{eq:zsq}.}
\begin{eqnarray}
	\mathbb{E}[Z^2\mathbf{1}(Z>t)] &\leq & \int_0^{t^2} \text{P}(Z>t) du + \int_{t^2}^\infty \text{P}(Z>\sqrt{u}) du\nonumber\\
	&=& t^2\text{P}(Z>t) + 2\int_{t^2}^\infty \exp\left[-\frac{\sqrt{u}-2Lh^\gamma}{4\sigma}+\ln N\right] du\nonumber\\
	&\leq & 2N(s^2+32\sigma^2+8s\sigma)e^\frac{t-2Lh^\gamma}{4\sigma}.
\end{eqnarray}
\subsection{Proof of Lemma \ref{lem:e1}}
We first discuss the case when
\begin{eqnarray}
	\sum_{i\in I_h(\mathbf{x})}K\left(\frac{\mathbf{x}-\mathbf{X}_i}{h}\right) \phi'(\eta(\mathbf{X}_i)+W_i-a(\mathbf{x})) = 0.
	\label{eq:cond}
\end{eqnarray}
According to \eqref{eq:adef}, this happens if the minimum value in \eqref{eq:adef} is reached within $[-M, M]$.
\begin{eqnarray}
	\left|\sum_{i\in I_h(\mathbf{x})}K\left(\frac{\mathbf{x}-\mathbf{X}_i}{h}\right)\phi'(Y_i-a(\mathbf{x}))\right|&=& \left|\sum_{i\in I_h(\mathbf{x})}K\left(\frac{\mathbf{x}-\mathbf{X}_i}{h}\right)\left[\phi'(Y_i-a(\mathbf{x}))-\phi'(\eta(\mathbf{X}_i) + W_i - a(\mathbf{x}))\right]\right|\nonumber\\
	&\leq & \sum_{i\in I_h(\mathbf{x})}K\left(\frac{\mathbf{x}-\mathbf{X}_i}{h}\right)|\phi'(Y_i-a(\mathbf{x})) - \phi'(\eta(\mathbf{X}_i) + W_i - a(\mathbf{x}))|\nonumber\\
	&\overset{(a)}{\leq} & \underset{i\in I_h(\mathbf{x})\cap \mathcal{B}}K\left(\frac{\mathbf{x}-\mathbf{X}_i}{h}\right)|\phi'(Y_i-a(\mathbf{x})) - \phi'(\eta(\mathbf{X}_i) + W_i - a(\mathbf{x}))|\nonumber\\
	&\overset{(b)}{\leq}& 2\sum_{i\in I_h(\mathbf{x})\cap \mathcal{B}}K\left(\frac{\mathbf{x}-\mathbf{X}_i}{h}\right) (T+Z)\nonumber\\
	&\overset{(c)}{\leq} & 2(T+Z)C_Kq_h(\mathbf{x}).
	\label{eq:zeropoint1}	
\end{eqnarray}
For (a), recall that $Y_i\neq \eta(\mathbf{X}_i) + W_i$ only for attacked sample. For (b), recall \eqref{eq:adef}, we have
\begin{eqnarray}
	\underset{j\in I_h(\mathbf{x})}{\min} \eta(\mathbf{X}_j) + W_j\leq a(\mathbf{x})\leq \underset{j\in I_h(\mathbf{x})}{\max} \eta(\mathbf{X}) + W_j,
	\label{eq:abound}
\end{eqnarray}
therefore $|\eta(\mathbf{x}) + W_i-a|\leq Z$, $\forall i\in I_h(\mathbf{x})$. Recall that
\begin{eqnarray}
	\phi'(u)= \left\{
	\begin{array}{ccc}
		2u &\text{if} & |u|\leq T\\
		2T & \text{if} & u>T\\
		-2T &\text{if} & u<-T,
	\end{array}
	\right.
	\label{eq:derivative}
\end{eqnarray} 
thus $|\phi'(\eta(\mathbf{X}_i)+W_i-a(\mathbf{x}))|\leq 2Z$. (c) just uses Assumption 2.

Moreover, if $\delta \leq T-Z$,
\begin{eqnarray}
	&&\sum_{i\in I_h(\mathbf{x})}K\left(\frac{\mathbf{x}-\mathbf{X}_i}{h}\right)[\phi'(Y_i - a(\mathbf{x})) - \phi'(Y_i-(a(\mathbf{x})+\delta))]\nonumber\\
	&\geq & 2\delta \sum_{i\in I_h(\mathbf{x})}K\left(\frac{\mathbf{x}-\mathbf{X}_i}{h}\right) \mathbf{1}(|Y_i-a(\mathbf{x})|<T, |Y_i-(a(\mathbf{x})+\delta)|<T)\nonumber\\
	&\geq & 2\delta c_K |I_h(\mathbf{x})\setminus B|\nonumber\\
	&\geq & 2\delta c_K (n_h(\mathbf{x})-q_h(\mathbf{x})).
	\label{eq:zeropoint2}
\end{eqnarray}
Similarly,
\begin{eqnarray}
	\sum_{i\in I_h(\mathbf{x})}K\left(\frac{\mathbf{x}-\mathbf{X}_i}{h}\right)[\phi'(Y_i - (a(\mathbf{x})-\delta)) - \phi'(Y_i-a(\mathbf{x}))]\geq 2\delta c_K(n_h(\mathbf{x})-q_h(\mathbf{x})).
	\label{eq:zeropoint3}
\end{eqnarray}
Let $\delta = (T+Z) r(\mathbf{x})$, with condition $r(\mathbf{x})\leq (T-Z)/(T+Z)$, $\delta\leq T-Z$, thus \eqref{eq:zeropoint2} and \eqref{eq:zeropoint3} hold. From \eqref{eq:zeropoint1}, \eqref{eq:zeropoint2} and \eqref{eq:zeropoint3},
\begin{eqnarray}
	\sum_{i\in I_h(\mathbf{x})}K\left(\frac{\mathbf{x}-\mathbf{X}_i}{h}\right) \phi'(Y_i-(a(\mathbf{x})+\delta))\leq 0\leq 	\sum_{i\in I_h(\mathbf{x})}K\left(\frac{\mathbf{x}-\mathbf{X}_i}{h}\right) \phi'(Y_i-(a(\mathbf{x})-\delta)).
\end{eqnarray}
Therefore $\hat{\eta}_0(\mathbf{x})\in [a-\delta, a+\delta])$.

Now it remains to discuss the case when \eqref{eq:cond} is violated, which indicates that the minimum in \eqref{eq:adef} is not reached in $[-M, M]$. Then $a(\mathbf{x})=M$ or $a(\mathbf{x}) = -M$. If $a(\mathbf{x}) = M$,
\begin{eqnarray}
	\sum_{i\in I_h(\mathbf{x})}K\left(\frac{\mathbf{x}-\mathbf{X}_i}{h}\right) \phi'(\eta(\mathbf{X}_i)+W_i-a(\mathbf{x})) > 0,
\end{eqnarray}
then go through \eqref{eq:zeropoint1}, 
\begin{eqnarray}
	\left|\sum_{i\in I_h(\mathbf{x})}K\left(\frac{\mathbf{x}-\mathbf{X}_i}{h}\right)\phi'(Y_i-a(\mathbf{x}))\right|> -2(T+Z)C_Kq_h(\mathbf{x}).
\end{eqnarray}
With \eqref{eq:zeropoint3} and $\delta=(T+Z)r(\mathbf{x})$,
\begin{eqnarray}
	\left|\sum_{i\in I_h(\mathbf{x})}K\left(\frac{\mathbf{x}-\mathbf{X}_i}{h}\right)\phi'(Y_i-(a(\mathbf{x})-\delta))\right|> 0.
\end{eqnarray}
Therefore $\hat{\eta}_0(\mathbf{x})\geq a(\mathbf{x})-\delta$, and from \eqref{eq:eta_app}, $\hat{\eta}_0(\mathbf{x})\leq M$. Therefore $|\hat{\eta}_0(\mathbf{x})-a(\mathbf{x})|\leq \delta$ still holds. Similar argument holds for $a(\mathbf{x}) = -M$. The proof is complete.

\subsection{Proof of Lemma \ref{lem:e2}}\label{sec:e2}
From Assumption 3, $T\geq 4Lh + 16\sigma \ln N$, thus by condition (b) in Lemma \ref{lem:e2}, $T\geq 2Z$, $(T-Z)/(T+Z)\geq 1/3$, and
\begin{eqnarray}
	r(\mathbf{x}) = \frac{C_Kq_h(\mathbf{x})}{c_K(n_h(\mathbf{x}) - q_h(\mathbf{x}))}\leq \frac{C_K\frac{c_K}{3C_K+c_K}n_0}{c_K\left(1-\frac{c_K}{3C_K+c_K}\right)n_0} = \frac{1}{3}\leq \frac{T-Z}{T+Z}.
\end{eqnarray}
Therefore from Lemma \ref{lem:e1},
\begin{eqnarray}
	|\hat{\eta}_0(\mathbf{x})-a(\mathbf{x})|&\leq & (T+Z)r(\mathbf{x})\nonumber\\
	&\leq & T\left(1+\frac{1-r(\mathbf{x})}{1+r(\mathbf{x})}\right) r(\mathbf{x}) \nonumber\\
	&=&\frac{2T}{\frac{1}{r(\mathbf{x})} + 1}\nonumber\\
	&\leq & \frac{2TC_Kq_h(\mathbf{x})}{c_Kn_h(\mathbf{x})}\nonumber\\
	&\leq & \frac{2TC_Kq_h(\mathbf{x})}{c_Kn_0}.
\end{eqnarray}

\subsection{Proof of Lemma \ref{lem:n}}
Define 
\begin{eqnarray}
	p_h(\mathbf{x}) = \int_{\|\mathbf{u}-\mathbf{x}\|\leq h}f(\mathbf{u})d\mathbf{u}
\end{eqnarray}
as the probability mass of ball centering at $\mathbf{x}$ with radius $h$.
\begin{lem}
	(\cite{jiang2017uniform}, Lemma 3) with probability at least $1-1/N$, for all $\mathbf{x}\in \mathcal{X}$,
	\begin{eqnarray}
		\left|p_h(\mathbf{x}) - \frac{n_h(\mathbf{x})}{N}\right|\leq \beta_N\sqrt{p_h(\mathbf{x})} + \beta_N^2,
	\end{eqnarray}
	in which $\beta_N=\sqrt{4(d+3)\ln(2N)/N}$. 
\end{lem}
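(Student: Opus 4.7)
The plan is to establish this uniform concentration inequality as a direct application of the relative Vapnik–Chervonenkis inequality to the class of Euclidean balls of fixed radius $h$ centred in $\mathcal{X}$. Since $n_h(\mathbf{x})/N$ is the empirical probability of the training sample falling into $B(\mathbf{x}, h)$ and $p_h(\mathbf{x})$ is the corresponding true probability, the claim is precisely a two-sided uniform relative deviation bound over the class $\mathcal{B}_h := \{B(\mathbf{x}, h) : \mathbf{x} \in \mathcal{X}\}$.

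First, I would bound the VC complexity of $\mathcal{B}_h$. The class of all Euclidean balls in $\mathbb{R}^d$ has VC dimension at most $d+1$, via the classical lift $\mathbf{x} \mapsto (\mathbf{x}, \|\mathbf{x}\|^2)$ which turns balls into half-spaces in $\mathbb{R}^{d+1}$. Since $\mathcal{B}_h$ is a subclass, Sauer's lemma yields a shatter-coefficient bound $S(\mathcal{B}_h, m) \leq (em/(d+1))^{d+1}$, which for our purposes is at most $(2N)^{d+2}$ when $m = 2N$.

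Second, I would invoke the relative-deviation form of the VC inequality (Vapnik 1998, Theorem 4.2; Boucheron--Lugosi--Massart 2013, Theorem 12.10): for any class $\mathcal{C}$ of measurable sets,
\begin{eqnarray}
    \mathrm{P}\!\left(\sup_{C \in \mathcal{C}} \frac{P(C) - \hat{P}_N(C)}{\sqrt{P(C)}} > \varepsilon\right) \leq 4\, S(\mathcal{C}, 2N)\, \exp(-N\varepsilon^2/4), \nonumber
\end{eqnarray}
together with its symmetric counterpart. Setting $\mathcal{C} = \mathcal{B}_h$ and $\varepsilon = \beta_N = \sqrt{4(d+3)\ln(2N)/N}$, the right-hand side is at most $4(2N)^{d+2} \exp(-(d+3)\ln(2N))$, which is $\leq 1/N$ for $N$ large after absorbing universal constants; the ``$+3$'' in the definition of $\beta_N$ is exactly the slack needed to swallow the polynomial shatter factor and leave a clean $1/N$ failure probability.

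Third, the one-sided multiplicative bound must be converted to the stated additive form. From $P(B) - \hat{P}_N(B) \leq \beta_N \sqrt{P(B)}$, completing the square in $\sqrt{P(B)}$ gives $\sqrt{P(B)} \leq \sqrt{\hat{P}_N(B) + \beta_N^2/4} + \beta_N/2$, and substituting back produces $|P(B) - \hat{P}_N(B)| \leq \beta_N \sqrt{P(B)} + \beta_N^2$ (the reverse direction is handled identically by the symmetric inequality). The main obstacle is not conceptual but one of bookkeeping constants: verifying that the specific $\beta_N$ is large enough to drive the shatter coefficient times the exponential tail below $1/N$, while remaining tight enough to yield the optimal $\sqrt{\ln N / N}$ rate; the choice of $d+3$ (rather than $d+1$) in $\beta_N$ is precisely what achieves this. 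Since this lemma reproduces Lemma 3 of \cite{jiang2017uniform} verbatim, one may alternatively simply cite that reference.
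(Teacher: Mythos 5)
Your proposal is correct: the paper offers no proof of this lemma beyond the citation to \cite{jiang2017uniform}, and your sketch via the relative (normalized) VC inequality over the class of radius-$h$ balls, with the VC dimension $d+1$ of balls absorbed into the $\sqrt{(d+3)\ln(2N)/N}$ rate, is exactly how that cited Lemma 3 is established. One small bookkeeping slip: the completing-the-square step is needed for the direction $\hat{P}_N(B)-P(B)\leq \beta_N\sqrt{\hat{P}_N(B)}$ (to convert $\sqrt{\hat{P}_N(B)}$ into $\sqrt{P(B)}+\beta_N$, which is where the additive $\beta_N^2$ comes from), not the direction you attach it to, but this does not affect the validity of the argument.
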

Assumption 3 requires that $h\geq (\ln^2 N/N)^{1/d}$, hence for sufficiently large $N$,
\begin{eqnarray}
	\frac{\beta_N}{\sqrt{p_h(\mathbf{x})}}\leq \frac{\beta_N}{\sqrt{f_m\alpha v_dh^d}}=\sqrt{\frac{4(d+3)\ln(2N)}{f_m\alpha v_dh^dN}}\leq \frac{1}{3},
\end{eqnarray}
and $\beta_N^2/p_h(\mathbf{x})\leq 1/9$. Therefore
\begin{eqnarray}
	\left|p_h(\mathbf{x}) - \frac{n_h(\mathbf{x})}{N}\right|\leq p_h(\mathbf{x})\left(\frac{\beta_N}{\sqrt{p_h(\mathbf{x})}}+\frac{\beta_N^2}{p_h(\mathbf{x})}\right)\leq \frac{4}{9}p_h(\mathbf{x}),
\end{eqnarray}
which yields
\begin{eqnarray}
	n_h(\mathbf{x})\geq \frac{5}{9}Np_h(\mathbf{x})> \frac{1}{2}Nf_m\alpha v_dh^d=\frac{1}{2}n_0.
	\label{eq:nh1}
\end{eqnarray}
For the upper bound, not that $p_h(\mathbf{x})\leq f_Mv_dh^d$, 
\begin{eqnarray}
	n_h(\mathbf{x})\leq \frac{13}{9}Np_h(\mathbf{x})\leq \frac{3}{2}Nf_Mv_dh^d.
	\label{eq:nh2}
\end{eqnarray}
With probability at least $1-1/N$, \eqref{eq:nh1} and \eqref{eq:nh2} hold uniformly for all $\mathbf{x}\in \mathcal{X}$.

\subsection{Proof of Lemma \ref{lem:localw}}\label{sec:localw}
For $i,j\in [N]$, let $A_{ij}$ be a $d-1$ dimensional hyperplane that perpendicularly bisects $\mathbf{X}_i$ and $\mathbf{X}_j$. The number of planes is $N_p=N(N-1)/2$. The number of regions divided by these planes can be bounded by
\begin{eqnarray}
	N_r = \sum_{j=0}^d \binom{N_p}{k}\leq dN_p^d\leq dN^{2d}.
\end{eqnarray}
For all $\mathbf{x}$ within a specific region, its nearest neighbors should be the same. Hence
\begin{eqnarray}
	|\{\mathcal{N}_k(\mathbf{x})|\mathbf{x}\in \mathcal{X}, k\in [N] \}|\leq dN^{2d+1}.
	\label{eq:nsets}
\end{eqnarray}
Similar formulation was used in \cite{jiang2019non}, proof of Lemma 3.

For each $\mathcal{N}_k(\mathbf{x})$, from Assumption 1(d), conditional on positions of $\mathbf{X}_1,\ldots, \mathbf{X}_N$
\begin{eqnarray}
	\mathbb{E}\left[\exp\left(\lambda \sum_{i\in \mathcal{N}_k(\mathbf{x})} W_i\right)|\mathbf{X}^N\right]\leq e^{\frac{k}{2}\lambda^2 \sigma^2}, \forall \lambda \leq \frac{1}{\sigma},
\end{eqnarray}
in which we use $\mathbf{X}^N$ to substitute $\mathbf{X}_1,\ldots, \mathbf{X}_N$ for brevity. Then the following Chernoff bound holds:
\begin{eqnarray}
	\text{P}\left(\sum_{i\in \mathcal{N}_k(\mathbf{x})} W_i > t|\mathbf{X}^N\right)&\leq & \underset{0\leq \lambda\leq 1/\sigma}{\inf} \exp\left(-\lambda t+\frac{k}{2}\lambda^2 \sigma^2\right)\nonumber\\
	&\leq& \left\{
	\begin{array}{ccc}
		e^{-\frac{t^2}{2k\sigma^2}} &\text{if} & t\leq k\sigma\\
		e^{-\frac{t}{2\sigma}} &\text{if} & t>k\sigma.
	\end{array}
	\right.
\end{eqnarray}
If $t\geq \ln^2 N$, let $t=\sqrt{k}\sigma \ln N$. Otherwise, let $t=\sigma\ln^2 N$. Then
\begin{eqnarray}
	\text{P}\left(\sum_{i\in \mathcal{N}_k(\mathbf{x})} W_i > t|\mathbf{X}^N\right) \leq e^{-\frac{1}{2}}\ln N.	
\end{eqnarray}
Combine with \eqref{eq:nsets}, the proof is complete.

\subsection{Proof of Lemma \ref{lem:K}}\label{sec:K}
Recall that the statement of Theorem 2 requires that $K(\mathbf{u})$ monotonic decrease with $\|\mathbf{u}\|$. Define
\begin{eqnarray}
	r_j:=\sup \left\{\|u\| | K(\mathbf{u}) \geq c_K+j\Delta \right\}.
\end{eqnarray}
Cut $K$ into $m$ slices above $c_K$, whose heights are $\Delta=(C_K-c_K)/m$. Define the truncated kernel as
\begin{eqnarray}
	K_T(\mathbf{u}) := \sum_{j=1}^m \Delta \mathbf{1}(\|\mathbf{u}\|\leq r_j)+c_K\mathbf{1}(\|u\|\leq 1),
\end{eqnarray}
then $0\leq K(\mathbf{u})-K_T(\mathbf{u})\leq \Delta$, under $E$, 
\begin{eqnarray}
	\left| \sum_{i=1}^N \left(K\left(\frac{\mathbf{x}-\mathbf{X}_i}{h}\right) - K_T\left(\frac{\mathbf{x}-\mathbf{X}_i}{h}\right)\right) W_i\right|\leq \Delta\max_i|W_i|\leq 4\sigma\Delta \ln N,
	\label{eq:trunc}
\end{eqnarray}
and
\begin{eqnarray}
	\left|\sum_{i=1}^N K_T\left(\frac{\mathbf{x}-\mathbf{X}_i}{h}\right) W_i\right|&\leq & \left|\sum_{i=1}^N c_K\mathbf{1}(\|\mathbf{x}-\mathbf{X}_i\| < h)W_i + \sum_{j=1}^m \sum_{i=1}^N \Delta \mathbf{1}(\|\mathbf{X}_i-\mathbf{x}\|\leq hr_j)W_i\right|\nonumber\\
	&\leq & c_K\left|\sum_{i\in \mathcal{N}_{n_h(\mathbf{x})}(\mathbf{x})} W_i\right|+\delta\sum_{j=1}^m \left|\sum_{i\in \mathcal{N}_{n_{hr_j}(\mathbf{x})}(\mathbf{x})}W_i\right|\nonumber\\
	&\leq & (c_K+m\Delta)\sqrt{n_M}\ln N\nonumber\\
	&=&C_K\sqrt{n_M}\ln N.
	\label{eq:kt}
\end{eqnarray}
in which the last step uses Lemma \ref{lem:localw}. Since $m$ can be arbitrarily large and $\Delta$ can be arbitrarily small, from \eqref{eq:trunc} and \eqref{eq:kt},
\begin{eqnarray}
	\left|\sum_{i=1}^N K\left(\frac{\mathbf{x}-\mathbf{X}_i}{h}\right) W_i\right|\leq C_K\sqrt{n_M}\ln N.
\end{eqnarray}
The proof is complete.
\subsection{Proof of Lemma \ref{lem:monotone}}
Denote $g_1=F[\eta_1]$, $g_2=F[\eta_2]$, $g=F[\eta]$. If $g_1\leq g_2$ is not satisfied somewhere, then define
\begin{eqnarray}
	S=\{\mathbf{x}|g_1(\mathbf{x})>g_2(\mathbf{x}) \}.
\end{eqnarray}
Since $g_1=F(\eta_1)$, due to the uniqueness of optimization solution (Proposition \ref{prop:unique}), $\|\eta_1-g_1\|<\|\eta_1-g\|_1$ for all $L$-Lipschitz function $g$. Hence
\begin{eqnarray}
	\|\eta_1-g_1\|_1<\|\eta_1-\min\{g_1,g_2 \}\|_1,
\end{eqnarray}
thus
\begin{eqnarray}
	\int_S |\eta_1(\mathbf{x})-g_1(\mathbf{x})|d\mathbf{x}<\int_S |\eta_1(\mathbf{x})-g_2(\mathbf{x})|d\mathbf{x}.
\end{eqnarray}
In $S$, $g_2(\mathbf{x})<g_1(\mathbf{x})$, since $\eta_2(\mathbf{x})>\eta_1(\mathbf{x})$,
\begin{eqnarray}
	|\eta_2(\mathbf{x}) - g_2(\mathbf{x})|-|\eta_2(\mathbf{x}) - g_1(\mathbf{x})|\geq |\eta_1(\mathbf{x})-g_2(\mathbf{x})|-|\eta_1(\mathbf{x}) - g_1(\mathbf{x})|.
\end{eqnarray}
Therefore
\begin{eqnarray}
	\int_S |\eta_2(\mathbf{x}) - g_1(\mathbf{x})|d\mathbf{x}<\int_S |\eta_2(\mathbf{x}) - g_2(\mathbf{x})|d\mathbf{x},
\end{eqnarray}
which yields
\begin{eqnarray}
	\|\eta_2-\min\{g_1,g_2\}\|_1<\|\eta_2-g_2\|,
\end{eqnarray}
contradict with that $g_2=F[\eta_2]$ is the solution of the optimization problem \eqref{eq:opt}. Therefore $g_1\leq g_2$ everywhere.

\subsection{Proof of Lemma \ref{lem:largek}}
According to Assumption 1(d), which requires that $W_i$ is sub-exponential with parameter $\sigma$, we have
\begin{eqnarray}
	\mathbb{E}\left[\exp\left[\left.\lambda \sum_{i\in I_h(\mathbf{x})}K\left(\frac{\mathbf{x}-\mathbf{X}_i}{h}\right) W_i\right]\right|\mathbf{X}_1,\ldots, \mathbf{X}_N\right]\leq \exp\left[\frac{1}{2}\lambda^2C_K^2\sigma^2n_h(\mathbf{x})\right], \forall |\lambda|\leq \frac{1}{C_K\sigma},
\end{eqnarray}
and
\begin{eqnarray}
	\text{P}\left(\sum_{i\in I_h(\mathbf{x})}K\left(\frac{\mathbf{x}-\mathbf{X}_i}{h}\right) W_i> t|\mathbf{X}_1,\ldots, \mathbf{X}_N\right)\leq \underset{|\lambda|\leq 1/(C_K\sigma)}{\inf}e^{-\lambda t + \lambda ^2C_K^2\sigma^2n_h(\mathbf{x})}/2.
\end{eqnarray}
Therefore for sufficiently large $N$, let $t=2C_K\sigma \sqrt{n_h(\mathbf{x})\ln N}$,
\begin{eqnarray}
	\text{P}\left(\left.\sum_{i\in I_h(\mathbf{x})}K\left(\frac{\mathbf{x}-\mathbf{X}_i}{h}\right) W_i> 2C_K\sigma\sqrt{n_h(\mathbf{x})\ln N}\right|\mathbf{X}_1,\ldots, \mathbf{X}_N\right)\leq e^{-2\ln N} = \frac{1}{N^2}.
\end{eqnarray}
The opposite side can be proved similarly. Recall that under $E$, $n_h(\mathbf{x})\leq 3Nf_Mv_dh^d/2$. The proof is complete. 
\subsection{Proof of Lemma \ref{lem:l2}}
$\eta$ is Lipschitz, thus $\eta-\Delta$ is Lipschitz. Since $F[\eta_1]$ is the solution of optimization problem \eqref{eq:opt} with $\eta_1$, we have
\begin{eqnarray}
	\|\eta_1-F[\eta_1]\|_1\leq \|\eta_1-(\eta-\Delta)\|_1.
\end{eqnarray} 
Hence
\begin{eqnarray}
	\|F[\eta_1]-(\eta-\Delta)\|_1\leq \|F[\eta_1]-\eta_1\|_1 + \|\eta_1-(\eta-\Delta)\|_1\leq 2\|\eta_1-(\eta-\Delta)\|_1.
\end{eqnarray}
It remains to bound $\|\eta_1-(\eta-\Delta)\|_1$:
\begin{eqnarray}
	\|\eta_1-(\eta-\Delta)\|_1 &\leq & \|Lh+3C_k\sigma \sqrt{Nf_Mv_dh^d\ln N}-\Delta\|_1\nonumber\\
	&\leq & \frac{2TC_K}{c_kn_0}\int q_h(\mathbf{x})d\mathbf{x}+2M\int \mathbf{1}\left(q_h(\mathbf{x})>\frac{c_Kn_0}{3C_K+c_K}\right)d\mathbf{x}+2M\int_Sd\mathbf{x}\nonumber\\
	&\leq & \frac{2TC_Kqv_dh^d}{c_Kn_0}+\frac{2M(3C_K+c_K)qv_dh^d}{c_Kn_0}+2M\int_S d\mathbf{x}.
\end{eqnarray}
$\|\eta_2-F[\eta_2]\|_1$ can be bounded in the same way. Therefore
\begin{eqnarray}
	\max\left\{\|F[\eta_1]-(\eta-\Delta)\|_1,\|F[\eta_2]-(\eta+\Delta)\|_1 \right\}\leq (C_1T+C_2)\frac{qh^d}{n_0}+2M\int_S d\mathbf{x},
\end{eqnarray}
in which $C_1=4C_Kv_d/c_K$, $C_2=4M(3C_K+c_K)v_d/c_K$. We then show the following lemma.
\begin{lem}
	If a function $g$ is $L_d$-Lipschitz with bounded $\|g\|_1$, then 
	\begin{eqnarray}
		\|g\|_2^2\lesssim \left(\frac{(d+1)d^\frac{d}{2}L^d}{v_d}\right)^\frac{1}{d+1}\|g\|_1^\frac{d+2}{d+1}.
	\end{eqnarray}
\end{lem}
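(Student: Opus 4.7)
The target inequality is a Gagliardo–Nirenberg style interpolation: control the $L^2$ norm by a power of $L^1$ times a constant depending on the Lipschitz modulus. My plan is to route through the trivial bound $\|g\|_2^2\le \|g\|_\infty\,\|g\|_1$ and then pin down $\|g\|_\infty$ using the Lipschitz property together with $\|g\|_1$.

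First I would let $M_0=\|g\|_\infty$ and pick a point $\mathbf{x}_0$ at (or arbitrarily close to) which $|g|$ attains $M_0$; WLOG assume $g(\mathbf{x}_0)=M_0>0$ by symmetry. The hypothesis $\|\nabla g\|_\infty\le L$ (coordinatewise) gives $\|\nabla g\|_2\le L\sqrt{d}$ pointwise, so by the mean value theorem along segments from $\mathbf{x}_0$ I obtain the tent-function lower bound
\begin{equation*}
g(\mathbf{x})\ \ge\ M_0-L\sqrt{d}\,\|\mathbf{x}-\mathbf{x}_0\|_2\qquad\text{whenever the right-hand side is nonnegative}.
\end{equation*}

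Next I would integrate this lower bound over the Euclidean ball $B(\mathbf{x}_0,\,M_0/(L\sqrt{d}))$ on which the tent is nonnegative. Writing the integral in polar form with surface area $s_d=dv_d$ yields
\begin{equation*}
\|g\|_1\ \ge\ \int_0^{M_0/(L\sqrt d)} \bigl(M_0-L\sqrt d\,r\bigr)\,s_d r^{d-1}\,dr\ =\ \frac{v_d\,M_0^{d+1}}{(d+1)\,L^d\,d^{d/2}} ,
\end{equation*}
where the bracketed integral computes to $M_0^{d+1}/\bigl(L\sqrt d\bigr)^d\cdot\bigl(1/d-1/(d+1)\bigr)$. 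Solving this inequality for $M_0$ gives
\begin{equation*}
M_0\ \le\ \left(\frac{(d+1)\,d^{d/2}L^d}{v_d}\right)^{1/(d+1)}\|g\|_1^{1/(d+1)}.
\end{equation*}
Plugging into $\|g\|_2^2\le M_0\,\|g\|_1$ produces exactly the stated bound.

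The only real subtlety is what happens if $\mathbf{x}_0$ lies near the boundary of the domain on which $g$ is integrated, in which case the ball $B(\mathbf{x}_0,M_0/(L\sqrt d))$ may protrude outside the support and the lower bound on $\|g\|_1$ shrinks by at most a constant factor depending on geometry. In the usage inside Theorem~4, $g$ is a difference of two Lipschitz functions defined on the compact set $\mathcal{X}$, and Assumption~1(c) provides precisely the corner-shape control needed to replace $v_d$ by $\alpha v_d$ and absorb the loss into the $\lesssim$ constant. Thus the main obstacle — a potentially degenerate boundary ball — is handled by the same regularity assumption that was already used throughout the paper, and the proof reduces to the elementary tent-integral computation above.
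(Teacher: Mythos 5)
Your proof is correct and follows essentially the same route as the paper's: bound $\|g\|_2^2\le\|g\|_\infty\|g\|_1$, lower-bound $\|g\|_1$ by integrating the tent function $\|g\|_\infty-L\sqrt d\,\|\mathbf{x}-\mathbf{x}_0\|$ over a ball in polar coordinates, and solve for $\|g\|_\infty$. Your additional remark about the maximizer lying near the boundary of $\mathcal{X}$ addresses a detail the paper leaves implicit, and your resolution via the corner-shape condition is the right one.
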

\begin{proof}
	$g$ is continuous with bounded $\|g\|_1$, thus it reaches its maximum at some $\mathbf{x}_0$. Then
	\begin{eqnarray}
		|g(\mathbf{x})|\geq \|g\|_\infty-L_d\|\mathbf{x}-\mathbf{x}_0\|, \forall \|\mathbf{x}-\mathbf{x}_0\|\leq \|g\|_\infty/L_d.
	\end{eqnarray}
	Denote $s_d$ as the surface area of $d$-dimension unit ball, $s_d=dv_d$, then
	\begin{eqnarray}
		\|g\|_1&=&\int |g(\mathbf{x})|d\mathbf{x}\nonumber\\
		&\geq & \int_{\|\mathbf{x}-\mathbf{x}_0\|\leq \|g\|_\infty/L_d} (\|g\|_\infty - L_d\|\mathbf{x}-\mathbf{x}_0\|))d\mathbf{x}\nonumber\\
		&=&\int_0^{\|g\|_\infty/L_d}(\|g\|_\infty-L_dr)s_dr^{d-1}dr\nonumber\\
		&=&\frac{s_d}{d}\|g\|_\infty\left(\frac{\|g\|_\infty}{L_d}\right)^d - \frac{L_ds_d}{d+1}\left(\frac{\|g\|_\infty}{L_d}\right)^{d+1}\nonumber\\
		&=&\frac{v_d}{(d+1)L_d^d}\|g\|_\infty^{d+1}.
	\end{eqnarray}
	Hence
	\begin{eqnarray}
		\|g\|_\infty \leq \left(\frac{(d+1)L_d^d}{v_d}\|g\|_1\right)^\frac{1}{d+1},
	\end{eqnarray}
	\begin{eqnarray}
		\|g\|_2^2\leq \|g\|_1\|g\|_\infty\leq \left(\frac{(d+1)L_d^d}{v_d}\right)^\frac{1}{d+1}\|g\|_1^\frac{d+2}{d+1}.
	\end{eqnarray}
\end{proof}
Moreover, in \eqref{eq:opt}, the derivative of $g$ is bounded by $L$ in each dimension, hence $F[\eta]$ is $\sqrt{d}L$-Lipschitz. Let $L_d=\sqrt{d}L$. Since $\eta-\Delta$, $F[\eta_1]$ are both $L_d$-Lipschitz, $\eta-\Delta-F[\eta_1]$ is $2L_d$-Lipschitz, hence
\begin{eqnarray}
	\|F[\eta_1]-(\eta-\Delta)\|_2^2\lesssim \left(\frac{Tqh^d}{n_0}\right)^\frac{d+2}{d+1} + \left(\int_S d\mathbf{x}\right)^\frac{d+2}{d+1},
\end{eqnarray}
and 
\begin{eqnarray}
	\|F[\eta_1]-\eta\|_2^2&\leq& 2\|F[\eta_1]-(\eta-\Delta)\|_2^2+2\Delta^2\int_\mathcal{X}d\mathbf{x}\nonumber\\
	&\lesssim & \left(\frac{Tqh^d}{n_0}\right)^\frac{d+2}{d+1} + \left(\int_S d\mathbf{x}\right)^\frac{d+2}{d+1}+h^2+\frac{\ln N}{Nh^d}.
	\label{eq:etabound1}
\end{eqnarray}
From Lemma \ref{lem:largek},
\begin{eqnarray}
	\mathbb{E}\left[\int_S d\mathbf{x}\right] = \int_\mathcal{X} \text{P}(\mathbf{x}\in S)d\mathbf{x}\leq\int_\mathcal{X}\frac{2}{N^2}d\mathbf{x} \leq \frac{2}{f_mN^2}, 
\end{eqnarray}
\begin{eqnarray}
	\mathbb{E}\left[\left(\int_S d\mathbf{x}\right)^\frac{d+2}{d+1}\right]\leq \mathbb{E}[\int_S d\mathbf{x}]\left(\int_\mathcal{X}d\mathbf{x}\right)^\frac{1}{d+1}\leq \frac{1}{f_m^\frac{1}{d+1}}\mathbb{E}\left[\int_Sd\mathbf{x}\right]\lesssim \frac{1}{N^2}.
\end{eqnarray}
Therefore the second term in \eqref{eq:etabound1} decays faster than other terms, and
\begin{eqnarray}
	\|F[\eta_1]-\eta\|_2^2&\leq& 2\|F[\eta_1]-(\eta-\Delta)\|_2^2+2\Delta^2\int_\mathcal{X}d\mathbf{x}\nonumber\\
	&\lesssim & \left(\frac{Tqh^d}{n_0}\right)^\frac{d+2}{d+1} +h^2+\frac{\ln N}{Nh^d}.
	\label{eq:etabound}
\end{eqnarray}
The bound also holds for $\|F[\eta_2]-\eta\|_2^2$. Substitute $n_0$ in \eqref{eq:etabound} with \eqref{eq:n0}. The proof is complete.

\section{Additional Numerical Experiments}\label{sec:numadd}
In this section, we show some additional numerical experiments. In particular, section \ref{sec:uci} shows some experiments on real data from UCI repository. Section \ref{sec:profile} shows the comparison of the profile of estimated functions using synthesized data.

\subsection{Experiments on Real Data}\label{sec:uci}
We use the following datasets from UCI repository:

(1) Auto MPG, in which the 'displacement' column is the target;

(2) Abalone, in which the 'rings' column is the target, which represents the age;

(3) Wine quality. It includes two datasets, corresponding to red and white wine, respectively. For both two datasets, the 'quality' column is the target;

(4) Liver disorders. The 'drinks' column is the target.

For each dataset, we run experiments using simple kernel regression, median-of-means, trimmed means, initial estimator and the corrected one respectively. For the convenience of computation and comparison, the data are all scaled to $[0,1]$. Each dataset is splitted into train and test datasets, in which the train dataset has $90\%$ samples, while the test dataset has $10\%$. Among the training dataset, about $10\%$ of samples are attacked, i.e. $q=[0.1N_{train}]$, in which $[\cdot]$ rounds the value to integer, $q$ and $N_{train}$ denotes the number of attacked samples and all training samples, respectively. 

We use two attack strategies separately. The first one is random attack, which means that the attacked samples are randomly selected from the whole training dataset, and the label value after attack just follows a simple normal distribution with $\mu=0, \sigma = 10$. The other one is concentrated attack, which me The performances of these methods are evaluated using root mean squared error (RMSE). Results are shown in the following table.

\begin{table}[h!]
	\begin{center}
		\begin{tabular}{c|c|c|c|c|c|c|c} % <-- Alignments: 1st column left, 2nd middle and 3rd right, with vertical lines in between
			\hline
			\textbf{Dataset} & \textbf{Attack} & \textbf{Clean } &
			\textbf{Kernel} &
			\textbf{Median} &
			\textbf{Trimmed} &
			\textbf{Initial} &
			\textbf{Corrected} \\
			& \textbf{strategy} & \textbf{data} &\textbf{regression}&\textbf{of means}&
			\textbf{means}&\textbf{estimator}&\textbf{estimator}\\
			\hline
			AutoMPG & random & 0.132 & 0.909 & 0.227 & 0.182 & 0.180 & 0.162\\
			AutoMPG & concentrated & 0.132 & 1.401 & 1.079 & 0.927 & 0.664 & 0.579\\
			Abalone & random & 0.101 & 0.115 & 0.141 & 0.103 & 0.104 & 0.104\\
			Abalone & concentrated & 0.101 & 1.306 & 1.391 & 0.605 & 0.278 & 0.270\\
			Wine(Red) & random & 0.132 & 0.198 & 0.209 & 0.136 & 0.134 & 0.120\\
			Wine(Red) & concentrated & 0.132 & 1.434 & 1.515 & 0.433 & 0.222 & 0.205 \\
			Wine(White) & random & 0.141 & 0.179 & 0.151 & 0.166 & 0.141 & 0.141\\
			Wine(White) & concentrated & 0.141 & 1.244 & 1.210 & 0.240 & 0.210 & 0.196\\
			Liver Disorders & random & 0.202 & 0.472 & 0.444 & 0.200 & 0.199 & 0.199\\
			Liver Disorders & concentrated & 0.202 & 0.283 & 0.234 & 0.203 & 0.203 & 0.205\\
			\hline
		\end{tabular}
		\caption{Results on numerical experiments from UCI repository. The 'clean data' column indicates the RMSE value of simple kernel regression with unattacked training dataset.}
		\label{tab:table1}
	\end{center}
\end{table}

From table \ref{tab:table1}, it can be observed that our initial estimator performs much better than simple kernel regression, as well as traditional robust methods like median-of-means and trimmed mean. The correction technique makes the performance can further improve the performance. Under random attack, the improvement of our methods over trimmed mean is not obvious. Our improvement is more significant under concentrated attack. This finding agrees with our theory. As has been discussed in the main paper, the drawback of trimmed mean is that the trim fraction can not be adjusted appropriately among the whole support, if the attack is concentrated in a small region. Therefore, such drawback is crucial only if the attacked samples are concentrated.

\subsection{Profile Comparison}\label{sec:profile}
For each case, we generate $N=10000$ training samples, with each sample follows uniform distribution in $[0,1]$. The ground truth is
\begin{eqnarray}
	\eta(x) = \sin(2\pi x).
\end{eqnarray}
The noise follows standard Gaussian distribution $\mathcal{N}(0,1)$. The kernel function is
\begin{eqnarray}
	K(u) = 2-|u|, \forall |u|\leq 1.
\end{eqnarray}
The hyperparameters are $h=0.03$, $T=1$ and $M=3$.

For all cases, let $q=1000$. The estimated function with kernel estimator $\hat{\eta}_{NW}$, initial estimator $\hat{\eta}_0$ and corrected estimator $\hat{\eta}_c$ under these three types of attack are shown in Figure \ref{fig:random}, \ref{fig:onedir} and \ref{fig:centralize}, respectively, in which the blue solid curve is the estimated function, while the orange dashed curve is the ground truth.
\begin{figure}[h!]
	\centering
	\begin{subfigure}{0.32\textwidth}
		\includegraphics[width=\textwidth,height=0.8\textwidth]{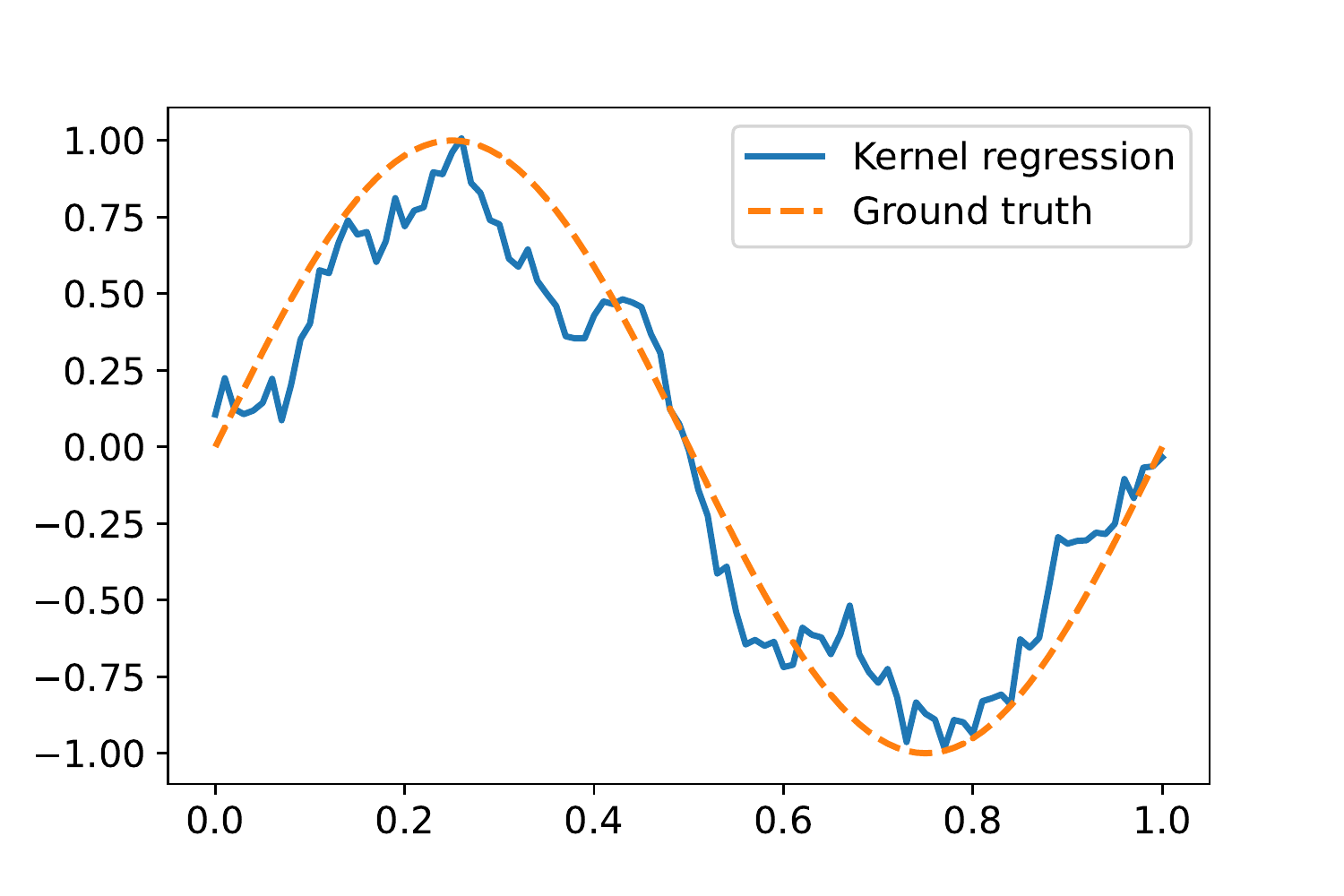}
		\caption{Kernel regression.}
	\end{subfigure}
	\hfill
	\begin{subfigure}{0.32\textwidth}
		\includegraphics[width=\textwidth,height=0.8\textwidth]{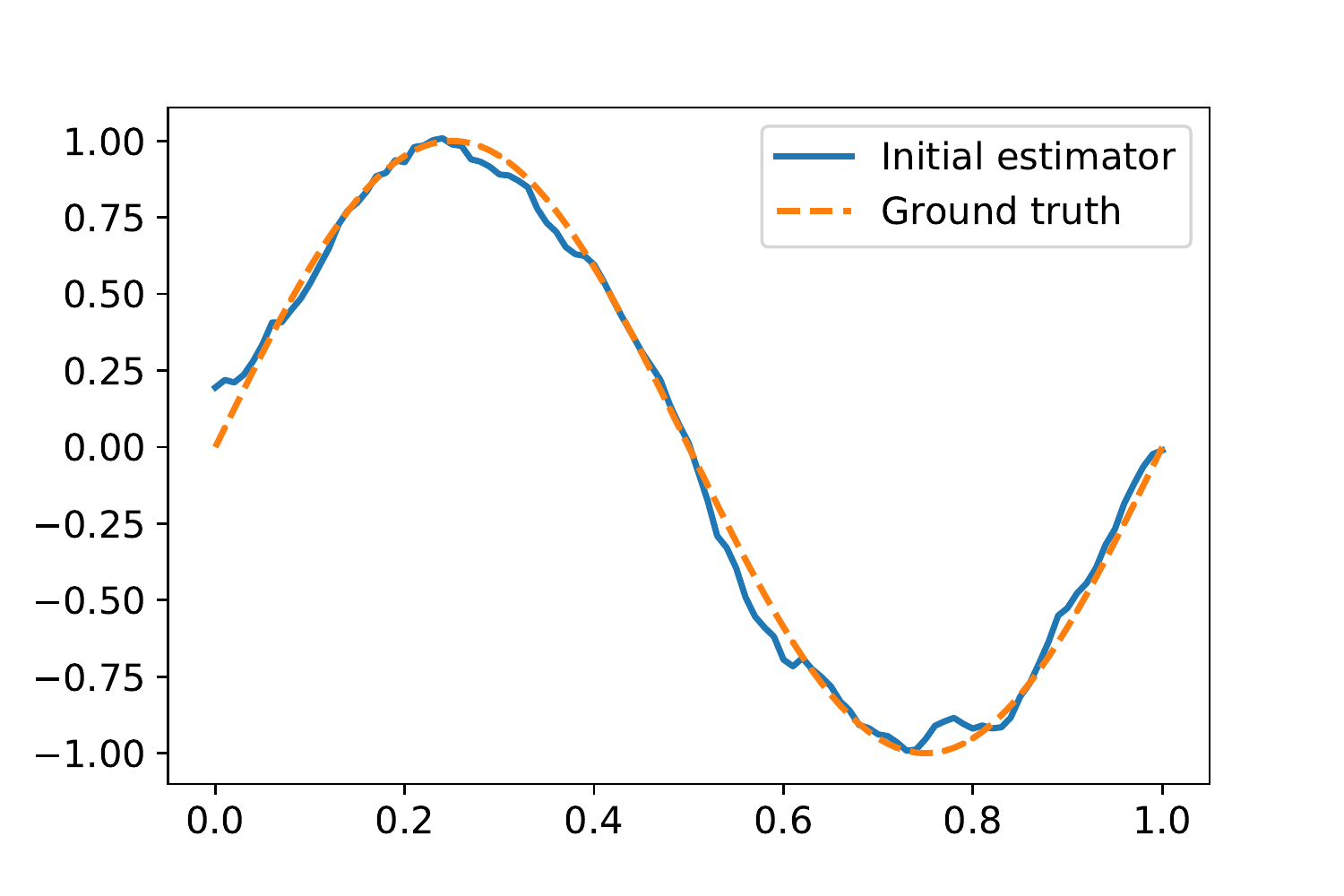}
		\caption{Initial estimator.}
	\end{subfigure}
	\hfill
	\begin{subfigure}{0.32\textwidth}
		\includegraphics[width=\textwidth,height=0.8\textwidth]{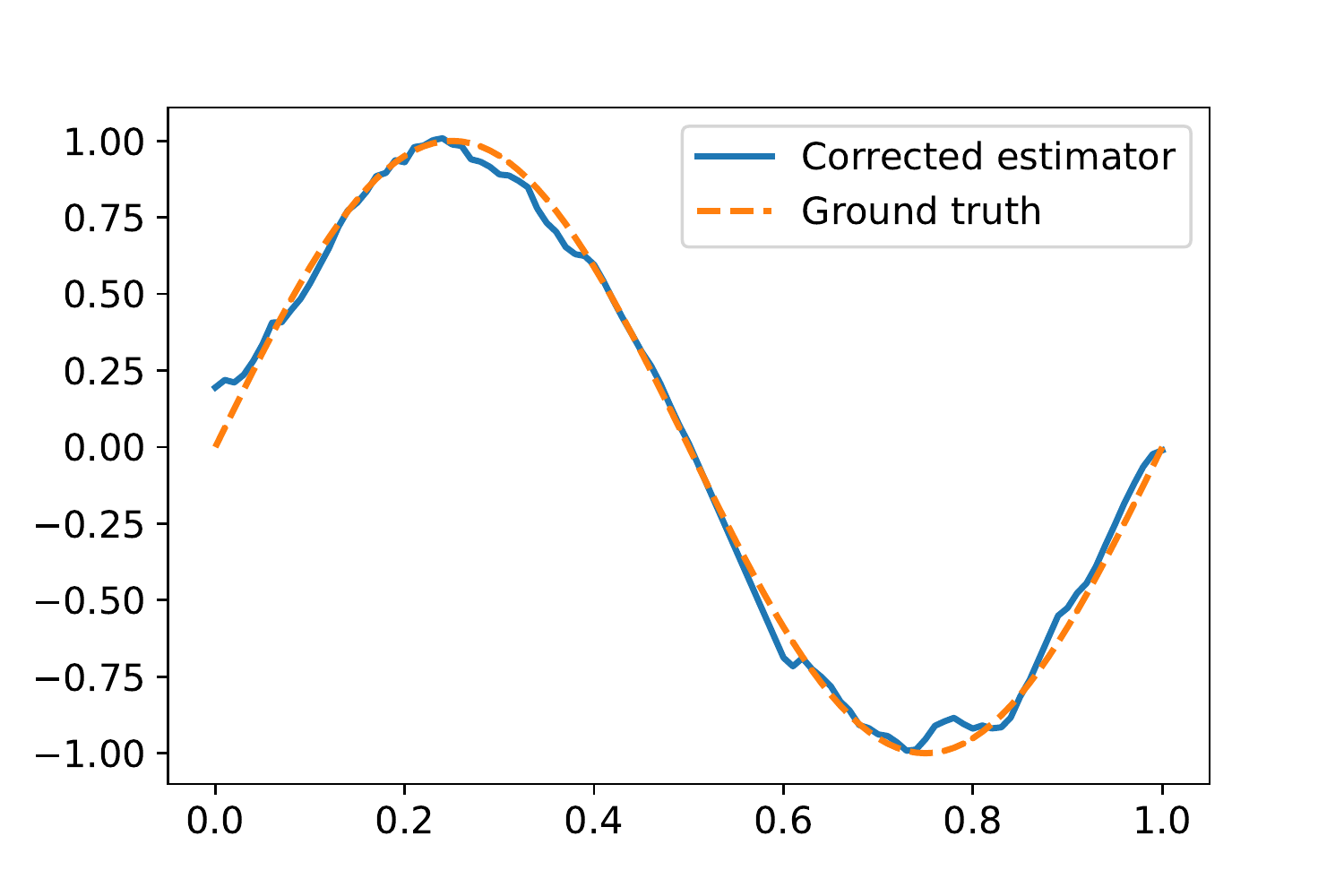}
		\caption{Corrected estimator.}
	\end{subfigure}
	
	\caption{Performance under random attack.}
	\label{fig:random}
\end{figure}

\begin{figure}[h!]
	\centering
	\begin{subfigure}{0.32\textwidth}
		\includegraphics[width=\textwidth,height=0.8\textwidth]{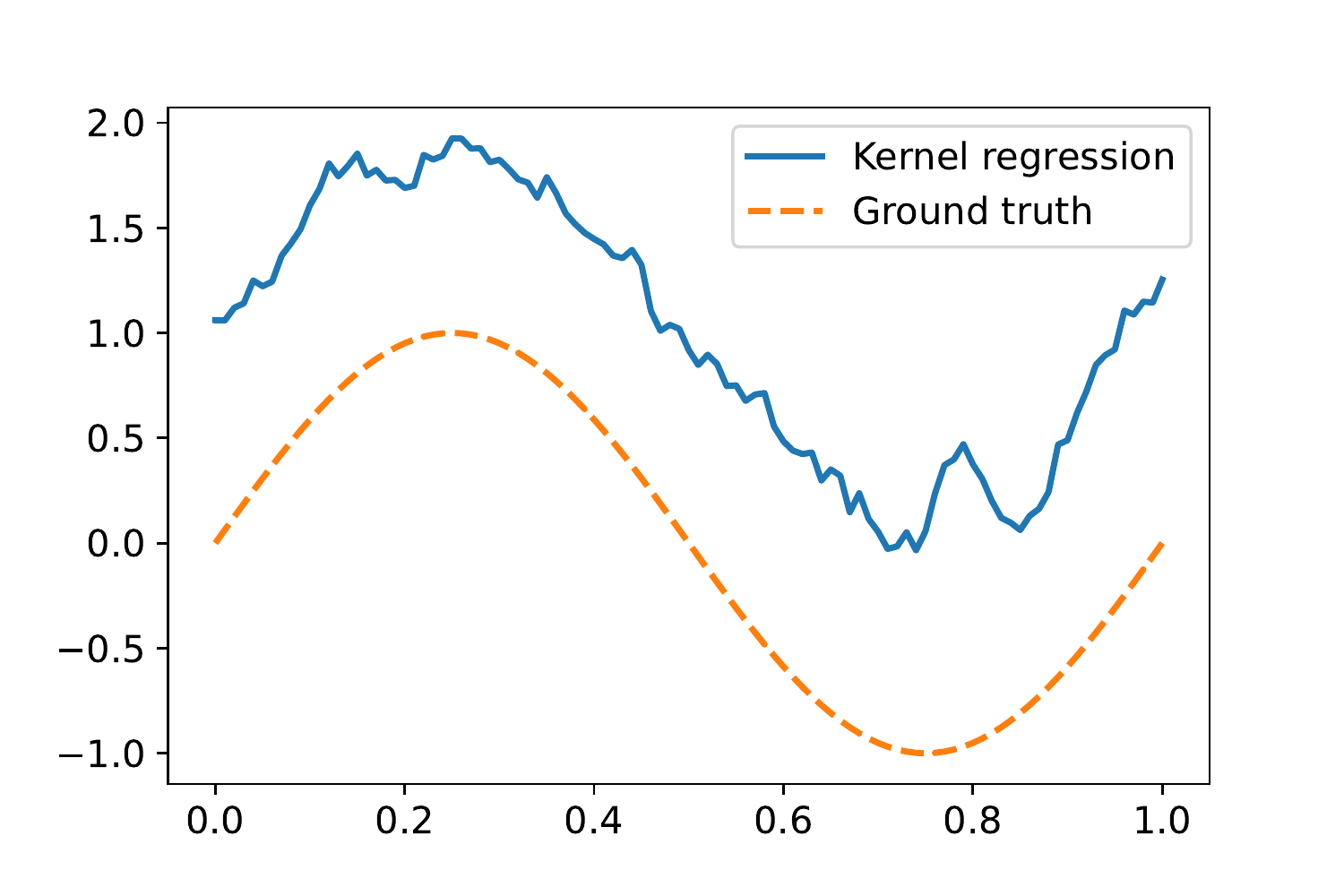}
		\caption{Kernel regression.}
	\end{subfigure}
	\hfill
	\begin{subfigure}{0.32\textwidth}
		\includegraphics[width=\textwidth,height=0.8\textwidth]{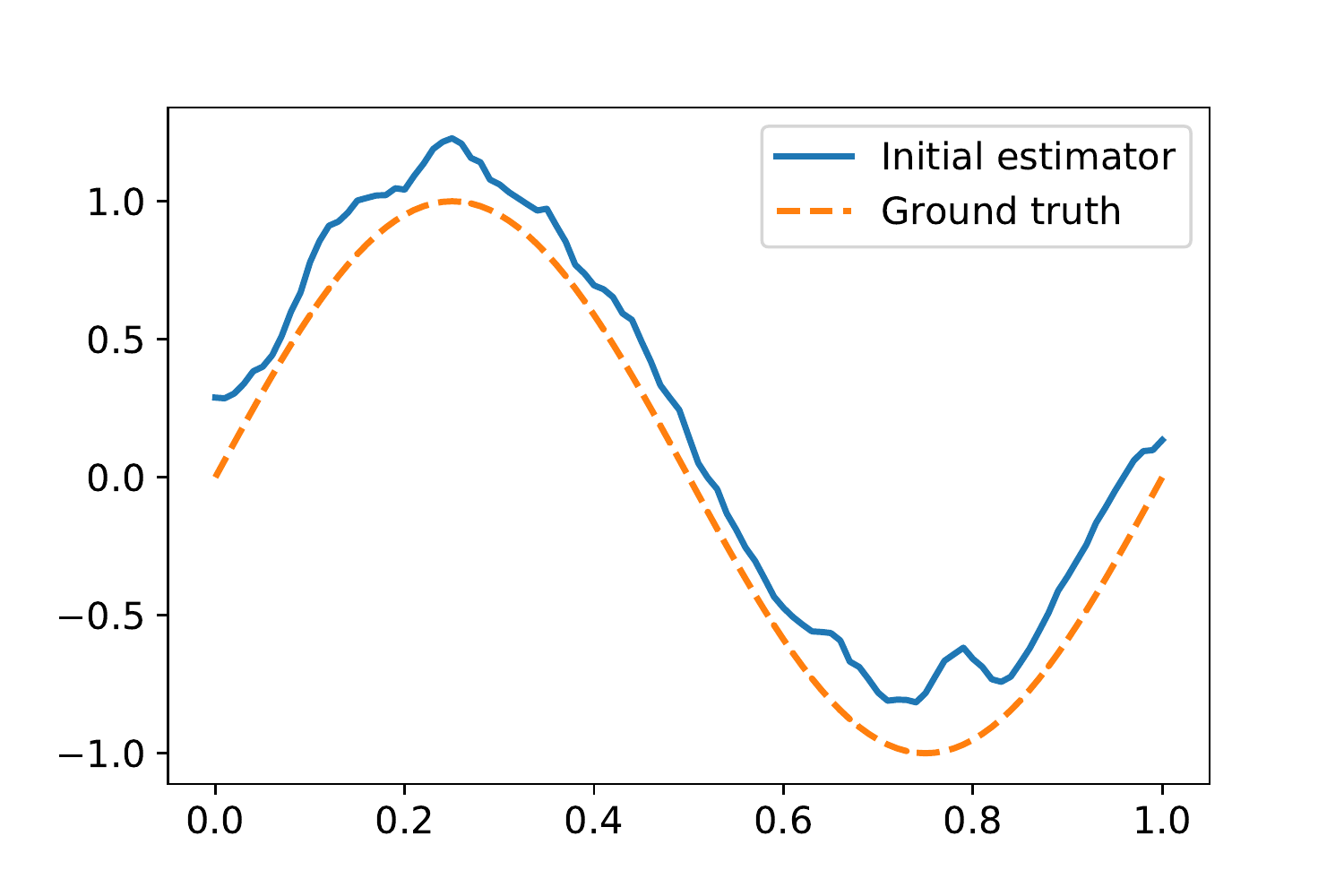}
		\caption{Initial estimator.}
	\end{subfigure}
	\hfill
	\begin{subfigure}{0.32\textwidth}
		\includegraphics[width=\textwidth,height=0.8\textwidth]{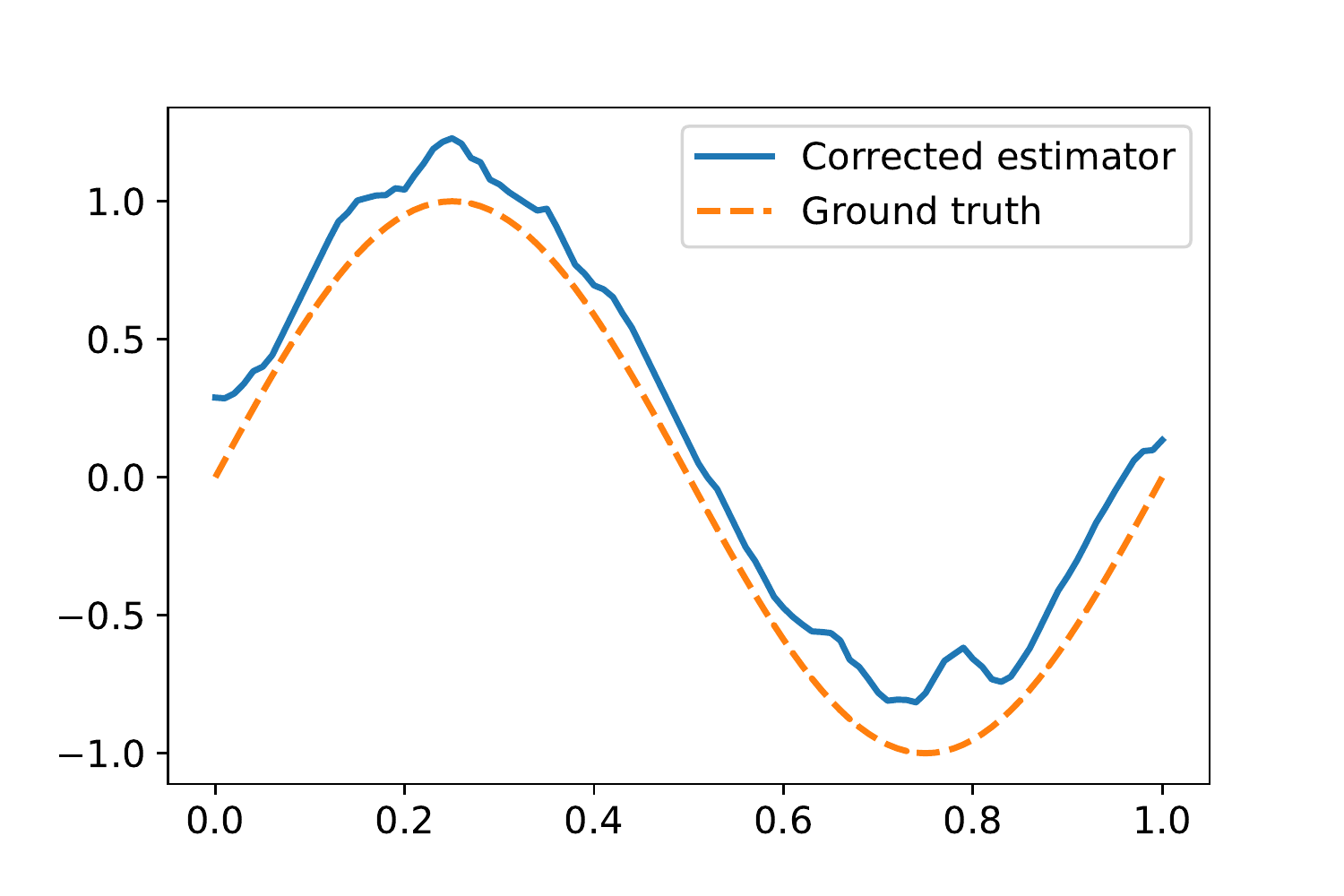}
		\caption{Corrected estimator.}
	\end{subfigure}
	
	\caption{Performance under one direction attack.}
	\label{fig:onedir}
\end{figure}
\begin{figure}[h!]
	\centering
	\begin{subfigure}{0.32\textwidth}
		\includegraphics[width=\textwidth,height=0.8\textwidth]{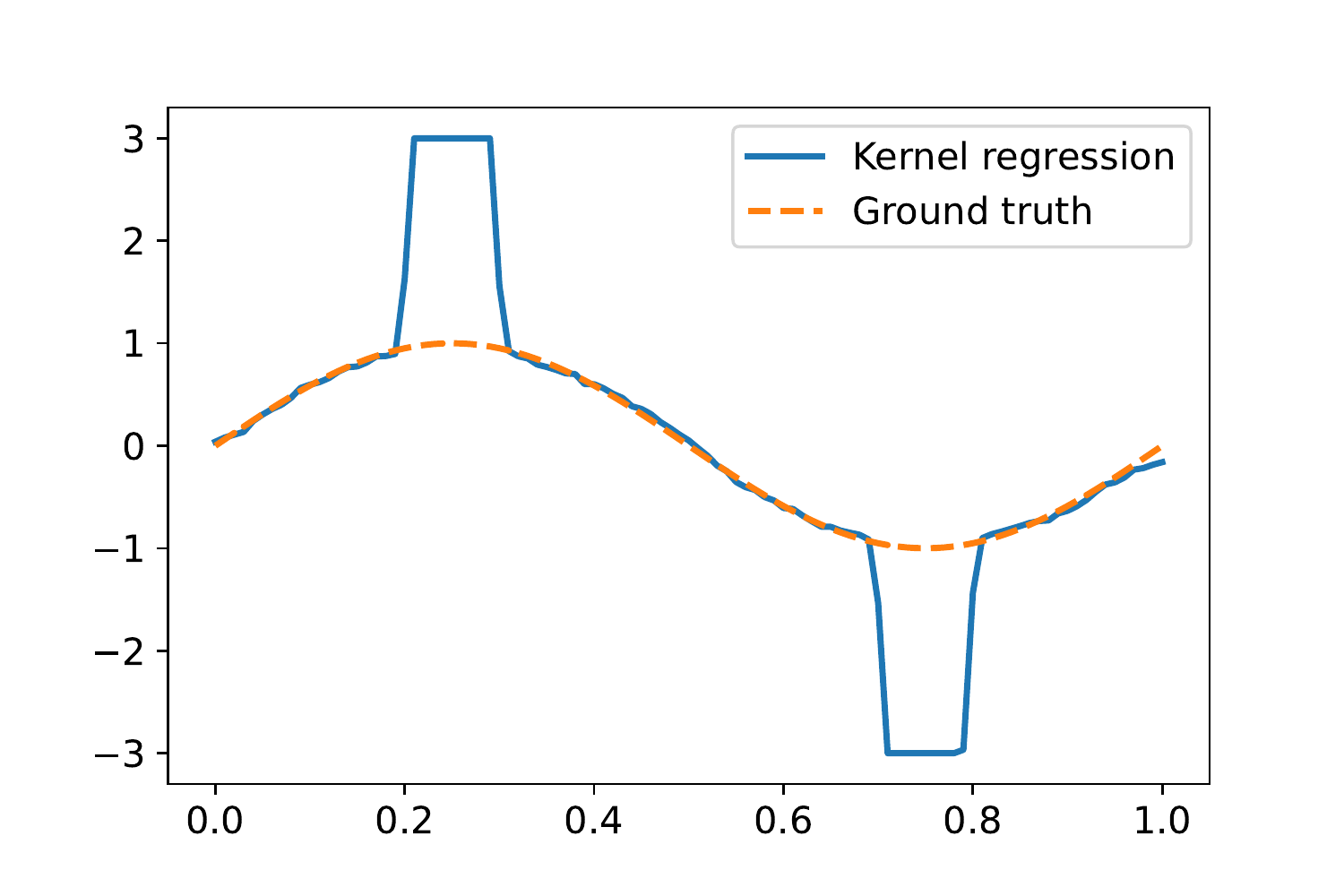}
		\caption{Kernel regression.}
		\label{fig:cent_ker}
	\end{subfigure}
	\hfill
	\begin{subfigure}{0.32\textwidth}
		\includegraphics[width=\textwidth,height=0.8\textwidth]{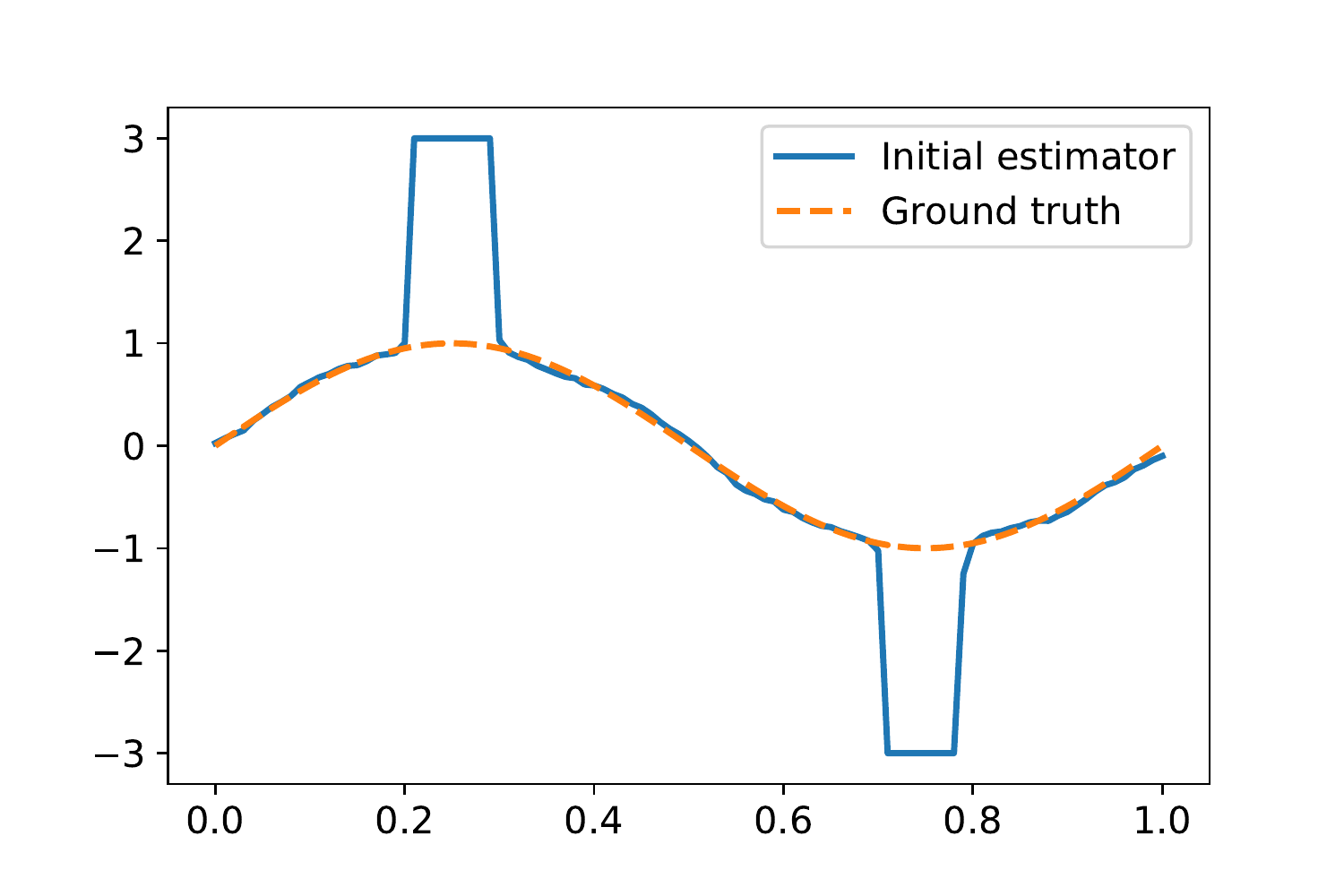}
		\caption{Initial estimator.}
		\label{fig:cent_init}
	\end{subfigure}
	\hfill
	\begin{subfigure}{0.32\textwidth}
		\includegraphics[width=\textwidth,height=0.8\textwidth]{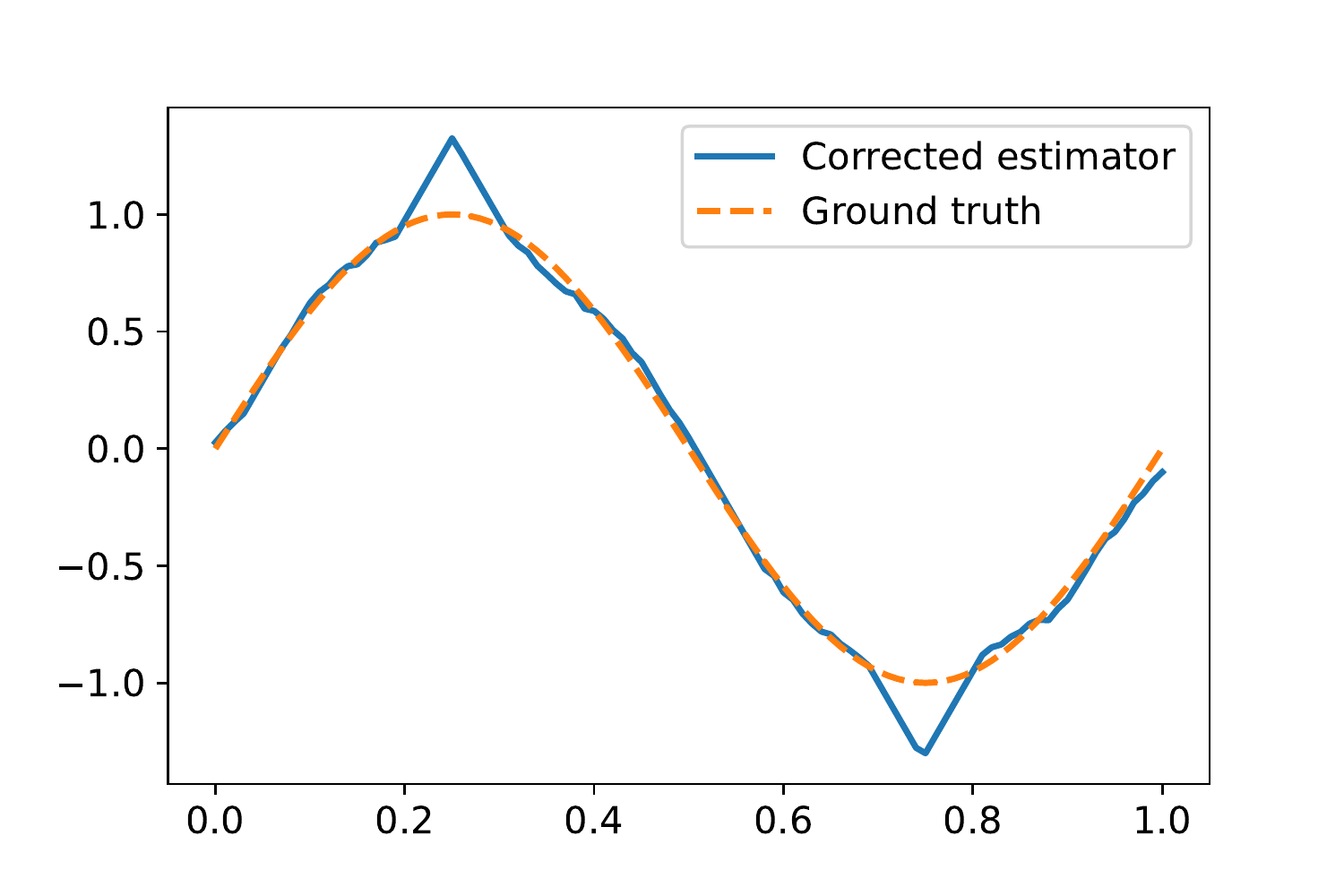}
		\caption{Corrected estimator.}
		\label{fig:cent_corr}
	\end{subfigure}
	
	\caption{Performance under concentrated attack.}
	\label{fig:centralize}
\end{figure}

From Figure \ref{fig:random}, it can be observed that the original kernel regression is badly affected by random attack, while the estimator \eqref{eq:eta_app} successfully withstand the malicious samples and fits the ground truth well. Figure \ref{fig:onedir} shows that with attacks in one direction, the kernel regression has large bias, while \eqref{eq:eta_app} handles it well. However, the initial estimator \eqref{eq:eta_app} no longer performs well under concentrated attack. As is already discussed in the main paper, although $q/N$ is small, the proportion of attacked samples around $0.25$ and $0.75$ are high. As a result, clean samples are not enough to beat attacked samples, and the estimated function value can deviate far away from its ground truth. Figure \ref{fig:centralize} shows the profile of estimated function with attacks concentrate around $0.25$ and $0.75$. In this case, the initial estimator gives a large spike at these two points. These two spikes are successfully removed by the corrected estimator \eqref{eq:optim}.

\end{document}